\newtheorem{thm}{Theorem}[section]
\newtheorem{prop}[thm]{Proposition}
\newtheorem{lem}[thm]{Lemma}
\newtheorem{cor}[thm]{Corollary}
\newtheorem{definition}[thm]{Definition}
\newtheorem{rem}[thm]{Remark}
\newcommand{\Z}{\mathbb{Z}}
\newcommand{\R}{\mathbb{R}}
\newcommand{\C}{\mathbb{C}}
\newcommand{\T}{\mathbb{T}}
\newcommand{\V}{\mathbb{V}}
\newcommand{\vol}{{\rm vol}}
\newcommand{\ddb}{\partial\overline{\partial}}
\newcommand{\del}{\partial}
\title[The harmonic $2$ forms on $K3$ surfaces]{
The harmonic $2$-forms on $K3$ surfaces converging to a flat $4$-dimensional orbifold}
\author[K. Hattori]{Kota Hattori}
\address{Keio University, 
3-14-1 Hiyoshi, Kohoku, Yokohama 223-8522, Japan}
\email{hattori@math.keio.ac.jp}
\begin{document}
\maketitle

\begin{abstract}
In this article, we study the asymptotic behavior of harmonic $2$-forms on $K3$ surfaces with Ricci-flat K\"ahler metrics, where metrics converge to the quotient of a flat $4$-dimensional torus by a finite group action. We can show that the space of anti-self-dual harmonic $2$-forms decomposes into two subspaces: one converges to the flat $2$-forms on the quotient of the torus, while the other converges to the first Chern forms of anti-self-dual connections on ALE spaces. 
\end{abstract}

\section{Introduction}
The study of Ricci-flat K\"ahler metrics on Calabi-Yau manifolds is one of the significant objects in geometric analysis on complex manifolds. On a compact K\"ahler manifold $(X,\omega)$, the Chern class of the canonical line bundle is represented by the Ricci-form of $\omega$. If the canonical bundle is trivial, then the Chern class vanishes. In this case, there is a unique Ricci-flat K\"ahler metric in every given K\"ahler class by Yau's Theorem \cite{Yau1978}. In general, the Ricci-flat K\"ahler metrics are obtained by solving Monge-Amp\`ere equation and we cannot understand their geometric aspects explicitly. However, there are several methods to construct Ricci-flat K\"ahler metrics gluing some singular Ricci-flat K\"ahler metrics with some local models such as gravitational instantons. 

The typical examples of such metrics can be constructed on Kummer surfaces. Let $\T^4=\C^2/\Z^4$ be a $4$-torus and we have the natural action of $\Z_2=\{ \pm 1\}$. Then the quotient $\T^4/\Z_2$ has $16$ orbifold singularities modelled by $\C^2/\Z_2$, whose minimal resolution is biholomorphic to the holomorphic cotangent bundle $T^*\C P^1$ of the complex projective line. Here, the Euclidean metric on $\C^2$ descends to the flat orbifold metric on $\T^4/\Z_2$ and $\C^2/\Z_2$, respectively, and $T^*\C P^1$ has the complete Ricci-flat K\"ahler metrics called the Eguchi-Hanson metrics asymptotic $\C^2/\Z_2$ with the flat metric. Gluing the flat K\"ahler metrics on $\T^4/\Z_2$ with the $16$ copies of Eguchi-Hanson metrics, we obtain the Ricci-flat K\"ahler metrics on $K3$ surfaces. The idea of this geometric picture has been suggested by Page \cite{Page1978} and Gibbons and Pope \cite{GibbonsPope1979}, then Kobayashi and Todorov interpreted it from the viewpoint of the period map of marked polarized $K3$ surfaces \cite{KobayashiTodorov}. A rigorous argument for the gluing procedure was shown by Arrezo and Pacard \cite{ArezzoPacard}, Donaldson \cite{Donaldson2010}, and Jiang \cite{jiang2025kummer}. 

Another context of this research comes from the spectral convergence of the Hodge Laplacian on Riemannian manifolds. For a compact Riemannian manifold $(X,g)$, the Hodge Laplacian is defined by $\Delta_g^k=dd^*+d^*d\colon \Omega^k(X)\to \Omega^k(X)$, where $d^*$ is the formal adjoint of the exterior derivative $d$. By the harmonic analysis, there are nonnegative eigenvalues $0\le \lambda^k_1(g)\le \lambda^k_2(g)\le\cdots\to \infty$ and a complete orthonormal system $(\varphi^k_j)_{j=1}^\infty$ of $L^2(X,\Lambda^kT^*X)$ such that $\Delta^k_g\varphi^k_j=\lambda^k_j(g)\varphi^k_j$. In the case of $k=0$, $\Delta_g^0$ is the Laplace-Beltrami operator acting on $C^\infty(X)$, and the theory of spectral convergence has been studied well. Here, the spectral convergence means the convergence of the $j$-th eigenvalues $(\lambda^0_j(g_i))_{i=0}^\infty$ and corresponding eigenfunctions to the ones defined on the limit space under the measured Gromov-Hausdorff convergence of $(X_i,g_i)\stackrel{i\to \infty}{\to} (X_\infty,d,m)$. In general, the limit space may not be smooth, hence we consider metric measure spaces. If the diameters of $(X_i,g_i)$ and the sectional curvatures $|{\rm sec}_{g_i}|$ are uniformly bounded, then the spectral convergence of $\Delta^0_{g_i}$ was shown by Fukaya \cite{Fukaya1987}, then shown by Cheeger and Colding \cite{Cheeger-Colding3} under the assumption of the bounded diameters and giving the uniform lower bound of the Ricci curvatures. In the case of $k>0$, Honda showed the spectral convergence of $\Delta_{g_i}^1$ in \cite{Honda2017spectral} under the measured Gromov-Hausdorff convergence, where the diameters are bounded and the Ricci curvatures are two-sided bounded. His result can be applied to the sequence of Ricci-flat K\"ahler metrics $g_i$ on a Kummer surface $X$, where all of the areas of $16$ exceptional curves converge to $0$. 

However, the examples of a sequence of Ricci-flat K\"ahler metrics on Kummer surfaces tell us the spectral convergence of $\Delta^2_{g_i}$ never holds. The $2$nd betti number of the $K3$ surface is known to be $22$, consequently, $\lambda^2_j(g_i)=0$ for all $1\le j\le 22$ and $\lambda^2_j(g_i)>0$ for all $j>22$. On the other hand, if $g_0$ is the flat metric on $\T^4/\Z_2$, the $0$-eigenspace of $\Delta^2_{g_0}$ corresponds to the space of $\Z_2$-invariant flat $2$-forms on $(\T^4,g_0)$, whose dimension is equal to $6$. Therefore, the sequence $(\lambda^2_j(g_i))_{i=0}^\infty$ never converge to the $j$-th eigenvalue of $\Delta^2_{g_0}$ for $6<j\le 22$. The main purpose of this article is to observe the behavior of the harmonic $2$-forms of $g_i$ as $(X,g_i)$ converging to the flat orbifold $\T^4/\Z_2$.

In this article, we consider the following setting. Let $\T^4_\Lambda=\C^2/\Lambda$ be a flat torus, where $\Lambda\subset \C^2$ is a lattice of maximal rank. Let $\Gamma$ be a nontrivial finite subgroup of $SU(2)$, which acts on $\T^4_\Lambda$ isometrically and preserving the complex structure, then we obtain a flat K\"ahler orbifold $\T:=\T^4_\Lambda/\Gamma$. Denote by $S$ the subset of orbifold singularities, then for every $p\in S$ there is a subgroup $\Gamma_p\subset\Gamma$ such that the neighborhood of $p$ in $\T$ is isomorphic to the neighborhood of the origin in $\C^2/\Gamma_p$. For example, if $\Gamma=\Z_2$, then $\# S=16$ and $\Gamma_p\cong\Z_2$ for all $p\in S$. Next, we take a minimal resolution around every $p\in S$, then obtain a smooth compact complex manifold  $X$ and a map $\pi\colon X\to \T$ such that $\pi|_{X\setminus \pi^{-1}(S)}\colon X\setminus \pi^{-1}(S)\to \T\setminus S$ is biholomorphic. For the standard holomorphic coordinate $(z,w)$ on $\C^2$, the holomorphic $2$-form $dz\wedge dw$ descends to $\T\setminus S$, then it is known that $\pi^*(dz\wedge dw)$ extends to the nowhere vanishing holomorphic $2$ form $\Omega$ on $X$. Moreover, we can also take a minimal resolution $\varpi_p\colon M_p\to\C^2/\Gamma_p$. Then, the underlying smooth manifold of $X$ can be obtained by 
\begin{align*}
X=(\T\setminus S)\cup \bigcup_{p\in S}M_p,
\end{align*}
where the intersection $(\T\setminus S)\cap M_p$ is diffieomorphic to $\R_+\times S^3/\Gamma_p$ and $M_p\cap M_q$ is empty if $p\neq q$.

On $M_p$, if $\alpha\in H^2(M_p)$ satisfies $\alpha(E)\neq 0$ for every $E\in H_2(M_p,\Z)$ with $E\cdot E=-2$, then Kronheimer showed that there is a unique Ricci-flat K\"ahler form $\eta_p\in \Omega^{1,1}(M_p)$ satisfying the ALE condition \cite{kronheimer1989construction}. Moreover, Kronheimer and Nakajima constructed vector bundles with anti-self-dual connections $(\mathcal{R}_i,A_i)$ on $M_p$ such that the first Chern classes $\{ c_1(\mathcal{R}_i)\}_{i=1,\ldots, N_{\Gamma_p}}$ form a basis of $H^2(M_p)$ \cite{KN1990instanton}. Here, $N_{\Gamma_p}$ is the number of the isomorphic classes of nontrivial irreducible representations of $\Gamma_p$. Here, $c_1(\mathcal{R}_i)$ is represented by the Chern form $c_1(A_i)$ which is an anti-self-dual $2$-form. 

For a K\"ahler metric $\omega$ on $X$, denote by $\mathcal{H}^2_\omega$ the space of harmonic $2$-forms on $X$. 
Moreover, we have the orthogonal decomposition $\mathcal{H}^2_\omega=\mathcal{H}^2_{\omega,+}\oplus \mathcal{H}^2_{\omega,-}$, where $\mathcal{H}^2_{\omega,\pm}=\{ \alpha\in\mathcal{H}^2_\omega|\, *\alpha=\pm \alpha\}$. If $\omega$ satisfies the Monge-Amp\`ere equation $\omega^2=\frac{1}{2}\Omega\wedge\bar{\Omega}$, then $\omega$ is Ricci-flat and $\mathcal{H}^2_{\omega,+}={\rm span}\{ \omega,{\rm Re}(\Omega),{\rm Im}(\Omega)\}$. Similarly, we have the decomposition of harmonic $2$-forms on $\T$ written as $\mathcal{H}^2_\T=\mathcal{H}^2_{\T,+}\oplus \mathcal{H}^2_{\T,-}$. Here, we regard $\mathcal{H}^2_{\T,\pm}$ as the space of $\Gamma$-invariant harmonic (anti-)self-dual $2$-forms on $\T^4_\Lambda$. Put $d_\Gamma:=\dim\mathcal{H}^2_{\T,-}$. 

\begin{thm}
There is a family of closed $2$-forms on $X$ 
\begin{align*}
\left( \tilde{\omega}_\varepsilon,{\rm Re}(\Omega),{\rm Im}(\Omega),(\tilde{\omega}^-_{\alpha,\varepsilon})_{\alpha=1,\ldots,d_\Gamma},(\tilde{\Xi}_{p,i,\varepsilon})_{p\in S,i=1,\ldots,N_{\Gamma_p}}\right)_{0<\varepsilon\le \varepsilon_0}
\end{align*}
satisfying the following properties. 
\begin{itemize}
\setlength{\parskip}{0cm}
\setlength{\itemsep}{0cm}
 \item[$({\rm i})$] The cohomology classes $[\tilde{\omega}_\varepsilon],[{\rm Re}(\Omega)],[{\rm Im}(\Omega)],[\tilde{\omega}^-_{\alpha,\varepsilon}],[\tilde{\Xi}_{p,i,\varepsilon}]$ form a basis of $H^2(X)$.
 \item[$({\rm ii})$] $\tilde{\omega}_\varepsilon$ satisfies the Monge-Amp\`ere equation $\tilde{\omega}_\varepsilon^2=\frac{1}{2}\Omega\wedge\bar{\Omega}$. In particular, $\tilde{\omega}_\varepsilon,{\rm Re}(\Omega),{\rm Im}(\Omega)$ form a basis of $\mathcal{H}^2_{\tilde{\omega}_\varepsilon,+}$.
 \item[$({\rm iii})$] $\tilde{\omega}^-_{\alpha,\varepsilon},\tilde{\Xi}_{p,i,\varepsilon}$ form a basis of $\mathcal{H}^2_{\tilde{\omega}_\varepsilon,-}$. 
 \item[$({\rm iv})$] There are $\omega_0 \in \mathcal{H}^2_{\T,+}$ and $\omega^-_1,\ldots,\omega^-_{d_\Gamma}\in \mathcal{H}^2_{\T,-}$ such that the following holds. As $\varepsilon\to 0$, we have the following convergence 
\begin{align*}
\left( \tilde{\omega}_\varepsilon,\tilde{\omega}^-_{\alpha,\varepsilon}, \tilde{\Xi}_{p,i,\varepsilon}\right)|_{\T\setminus S}
\to \left( \omega_0,\omega^-_\alpha, 0\right)
\end{align*}
in $C^k$-topology on every compact subset in $\T\setminus S$ and $\omega_0,{\rm Re}(dz\wedge dw),{\rm Im}(dz\wedge dw)$ form a basis of $\mathcal{H}^2_{\T,+}$ and $\omega^-_1,\ldots,\omega^-_{d_\Gamma}$ form a basis of $\mathcal{H}^2_{\T,-}$.
 \item[$({\rm v})$] Let $p\in S$. On the neighborhood of $\pi^{-1}(p)$, a Ricci-flat K\"ahler manifold $(M_p,\eta_p)$ bubbles off as a limit of $\varepsilon^{-2}\tilde{\omega}_\varepsilon$ as $\varepsilon\to 0$. Moreover, on the neighborhood of $\pi^{-1}(p)$, the first Chern form $c_1(A_{p,i})$ on $M_p$ bubbles off as a limit of $\tilde{\Xi}_{p,i,\varepsilon}$ as $\varepsilon\to 0$. 
\end{itemize}
\end{thm}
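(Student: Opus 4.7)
The strategy is to construct $\tilde\omega_\varepsilon$ by a Kummer-type gluing, produce the anti-self-dual harmonic forms by harmonic projection of carefully chosen closed approximate representatives, and then identify their asymptotics. First, glue the flat K\"ahler form on $\T$ with $\varepsilon^2\eta_p$ on each $M_p$ via cut-off functions supported on annular necks of width $\sim\varepsilon^{1/2}$ around each $\pi^{-1}(p)$. The resulting K\"ahler form has Monge-Amp\`ere defect tending to zero in a weighted H\"older norm as $\varepsilon\to0$, and the implicit-function / fixed-point argument of Arezzo--Pacard, Donaldson, and Jiang yields a true solution of $\tilde\omega_\varepsilon^2=\tfrac12\Omega\wedge\bar\Omega$, giving (ii). The K\"ahler bubble $(M_p,\eta_p)$ appearing as the pointed limit of $(X,\varepsilon^{-2}\tilde\omega_\varepsilon)$ is built into the construction, which establishes the metric part of (v).

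Next, build closed approximate representatives. For each $\omega^-_\alpha\in\mathcal{H}^2_{\T,-}$ pull it back to $X\setminus\pi^{-1}(S)$, cut off in the neck region, extend by zero across the bubbles, and add a small exact correction to restore closedness, yielding $\hat\omega^-_{\alpha,\varepsilon}$. For each $p\in S$ and $1\le i\le N_{\Gamma_p}$, the Chern form $c_1(A_{p,i})$ on $M_p$ is anti-self-dual with respect to $\eta_p$ and decays like $O(r^{-4})$ on the ALE end, hence is $L^2$; cut it off in the neck and extend by zero elsewhere to obtain a closed $\hat\Xi_{p,i,\varepsilon}$. By Mayer--Vietoris and the McKay correspondence $b_2(M_p)=N_{\Gamma_p}$, the count $3+d_\Gamma+\sum_p N_{\Gamma_p}$ matches $b_2(X)$. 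Pairing these classes with an explicit dual basis of $H_2(X;\Z)$, namely the $(-2)$-curves in the $M_p$ together with suitable $\Gamma$-invariant flat $2$-cycles in $\T\setminus S$, shows their linear independence for small $\varepsilon$; the perturbation from passing to harmonic representatives is of order $O(\varepsilon^N)$, so (i) follows.

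Let $G_\varepsilon$ be the Green's operator of $\Delta_{\tilde\omega_\varepsilon}$. Each closed $\hat\beta$ has harmonic representative $\hat\beta_H=\hat\beta-dG_\varepsilon d^*\hat\beta$, and uniform weighted elliptic estimates for $G_\varepsilon$ in function spaces that simultaneously encode decay in the neck and polynomial growth on each ALE model control the correction in both regions. Since by (ii) one has $\mathcal{H}^2_{\tilde\omega_\varepsilon,+}=\mathrm{span}\{\tilde\omega_\varepsilon,\mathrm{Re}(\Omega),\mathrm{Im}(\Omega)\}$, subtracting from $\hat\beta_H$ its $L^2$-projection onto this three-dimensional subspace yields the genuinely anti-self-dual harmonic forms $\tilde\omega^-_{\alpha,\varepsilon}$ and $\tilde\Xi_{p,i,\varepsilon}$; the subtracted coefficients are $O(\varepsilon^N)$, which is compatible with the basis property already established. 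This gives (iii).

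Finally, on any compact $K\Subset\T\setminus S$, for $\varepsilon$ small the cut-offs are inactive and the elliptic estimates give $C^k$-convergence of $\tilde\omega_\varepsilon$ and $\tilde\omega^-_{\alpha,\varepsilon}$ to $\omega_0$ and $\omega^-_\alpha$ on $K$, while $\tilde\Xi_{p,i,\varepsilon}\to 0$ there; rescaling by $\varepsilon^{-1}$ around each $\pi^{-1}(p)$, the metric converges to $(M_p,\eta_p)$ and $\tilde\Xi_{p,i,\varepsilon}$ converges to the unique $L^2$ anti-self-dual harmonic representative of its cohomology class on $M_p$, which by Kronheimer--Nakajima is $c_1(A_{p,i})$. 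This completes (iv) and (v). The principal analytic obstacle is the uniform-in-$\varepsilon$ mapping control of $G_\varepsilon$ across the degenerating family; this requires matched-asymptotic / doubly weighted function spaces adapted to the collapsing necks and the rescaling limits, and once these estimates are in place the remaining verifications are routine.
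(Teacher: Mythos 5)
Your overall architecture (glue, solve Monge--Amp\`ere, build closed approximate representatives, correct to harmonic, identify limits) matches the paper's, and your treatment of $\tilde{\omega}_\varepsilon$ itself is essentially the paper's (and the prior literature's). But for the anti-self-dual forms, which are the new content, there are two genuine gaps. First, ``cut it off in the neck and extend by zero'' does not produce a closed $2$-form: multiplying a closed form by a cutoff destroys closedness, so one must cut off a \emph{primitive}, and the whole construction hinges on having primitives with good decay and the right complex type. For $c_1(A_{p,i})$ the paper proves (via the $S^1$-invariance of the connection form and a Sasakian computation, Propositions \ref{prop instanton} and \ref{prop instanton potential}) that $c_1(A_{p,i})=\sqrt{-1}\ddb\psi_{p,i}$ near infinity with $|\nabla_0^k\psi_{p,i}|=O(r^{-2-k})$, and then glues $\sqrt{-1}\ddb(\chi\cdot\psi_{p,i})$, which is automatically closed and of type $(1,1)$; the $O(r^{-4})$ decay of $c_1(A_{p,i})$ alone, which is all you invoke, does not give this. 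For $\omega^-_\alpha$ the correct extension across the bubble is \emph{not} zero but the anti-self-dual form $\sqrt{-1}\ddb\nu_{p,\alpha}$ built from the hyper-K\"ahler moment map of the $S^1$- (or $SU(2)$-) action on the $A_{m-1}$ ALE space (Propositions \ref{prop moment S^1}--\ref{prop moment SU(2)}, Lemma \ref{lem ASD}); this is an $O(1)$-sized form on the bubble in the rescaled metric, so the discrepancy between your extend-by-zero ansatz and the true harmonic representative near $\pi^{-1}(p)$ is $O(1)$ and cannot be absorbed by a correction that you claim is uniformly small ``in both regions.''

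Second, you place the entire analytic burden on uniform-in-$\varepsilon$ mapping estimates for the Green's operator of the Hodge Laplacian on $2$-forms, and you explicitly defer exactly that point. The paper never needs it: because every approximate representative is a closed $(1,1)$-form and $(X,\tilde{g}_\varepsilon)$ is hyper-K\"ahler, anti-self-duality is equivalent to the single scalar condition $\Xi\wedge\tilde{\omega}_\varepsilon=0$, and a closed anti-self-dual form is automatically harmonic. So the correction reduces to solving the scalar equation $\tfrac12 L_{\tilde{\omega}_\varepsilon}(G_\varepsilon)=-F_\varepsilon-\lambda_\varepsilon$ with $\lambda_\varepsilon$ computed exactly from cup products, using the same uniform weighted Schauder theory (Theorem \ref{thm schauder}, Proposition \ref{prop weight sch int 0}) already established for the Monge--Amp\`ere step, together with the smallness $\|F_\varepsilon\|_{C^{k,\alpha}_{\delta-2,\tilde{g}_\varepsilon}}\lesssim\varepsilon^{3-\delta/2}$, which in turn depends on the glued forms being \emph{pointwise} anti-self-dual outside the thin annuli $A(p,\varepsilon^{1/2})$. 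This scalar reduction is the key idea missing from your proposal; without it (or without actually proving the uniform $2$-form Green's operator estimates you postulate), the harmonic-projection step and hence items (iii)--(v) are not established.
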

The meaning of the term ``bubble" in this article will be explained in Definition \ref{def conv bubble}. Roughly speaking, it refers to a refined description of geometric objects near singularities. Such bubbling phenomenon were known in several situation. For example, the construction of $\tilde{\omega}_\varepsilon$ and its asymptotic behavior has been already shown by \cite{KobayashiTodorov}\cite{ArezzoPacard}\cite{Donaldson2010}\cite{jiang2025kummer}. Recently, Tazoe described the second Chern form of the Levi-Civita connection of Ricci-flat K\"ahler metric around the singularities \cite{tazoe2025}. 

This paper is organized as follows. In Section \ref{sec Kummer}, we explain the basic setting of this paper and construct a compact complex surface $X$ and approximating K\"ahler metrics. In Section \ref{sec MA}, we construct a family of Ricci-flat K\"ahler metrics on $X$. The results in Sections \ref{sec Kummer} and \ref{sec MA} is not new, we follow the argument in \cite{jiang2025kummer}. Next, we construct a family of harmonic $2$-forms on $X$ with respect to the Ricci-flat K\"ahler metrics introduced in Section \ref{sec MA}. These harmonic $2$-forms decompose naturally into two families: One converges to the harmonic forms on the flat torus, and the other converges to those on ALE gravitational instantons. 
In Section \ref{sec ASD ALE}, we construct the approximating closed $2$ forms on $X$ converging to the harmonic forms on a flat torus, and in Section \ref{sec ASD torus}, we construct closed $2$-forms converging to the harmonic forms on ALE spaces. We obtain the precise statement of the main theorem in Section \ref{sec main}. We give some examples in Section \ref{sec ex}

\vspace{0.1cm}
{\bf Acknowledgement.}
The author would like to thank Shouhei Honda for suggesting the problems considered in this article. The author is also grateful to Itsuki Tazoe for helpful advice and valuable discussions. This work was supported by JSPS KAKENHI Grant Numbers 24K06717 and 24H00183.

\section{Ricci-flat K\"ahler metrics on $K3$ surfaces}\label{sec Kummer}
\subsection{Calabi-Yau manifolds}
\begin{definition}
\normalfont
Let $(X,\omega)$ be a K\"ahler manifold of dimension $m$ equipped with a closed $(m,0)$-form $\Omega$. 
$(X,\omega,\Omega)$ is a {\it Calabi-Yau manifold} if 
\begin{align*}
\omega^m=m!\left(\frac{\sqrt{-1}}{2}\right)^m(-1)^{m(m-1)/2}\Omega\wedge\bar{\Omega}.
\end{align*}
\end{definition}

\begin{rem}
\normalfont
By the definition, if $(X,\omega,\Omega)$ is a Calabi-Yau manifold, we can see that $\Omega$ is a holomorphic, nowhere vanishing $m$ form on $X$. It implies that the canonical bundle $K_X$ is trivial. Moreover, the K\"ahler metric induced by $\omega$ is Ricci-flat. 
\end{rem}
\begin{rem}
\normalfont
One of the typical examples of Calabi-Yau manifold is $\C^m$ with the standard K\"ahler form 
$\omega_0=\frac{\sqrt{-1}}{2}\sum_{k=1}^mdz_k\wedge d\bar{z}_k$ and the standard holomorphic volume form $\Omega_0=dz_1\wedge \cdots\wedge dz_m$. 
\end{rem}

In this paper, we only consider the case of dimension $2$. 
Let $(X,\omega)$ be a compact K\"ahler surface whose canonical bundle $K_X$ is holomorphically trivial. Let $\Omega$ be a nowhere vanishing $2$-form on $X$. Then there is a smooth function $f\in C^\infty(X)$ such that 
\begin{align*}
\omega^2=\frac{e^f}{2}\Omega\wedge \bar{\Omega}. 
\end{align*}
For a function $\varphi\in C^\infty(X)$ with 
\begin{align*}
\omega_\varphi:=\omega+\sqrt{-1}\ddb\varphi>0,
\end{align*}
consider the Monge-Amp\`ere equation 
\begin{align*}
\omega_\varphi^2=\frac{1}{2}\Omega\wedge\bar{\Omega}.
\end{align*}

In our definition, Calabi-Yau manifolds of complex  dimension $2$ are precisely hyper-K\"ahler manifolds of real dimension $4$. In fact, if $(X,\omega,\Omega)$ is a Calabi-Yau manifold of dimension $2$, then $\omega,{\rm Re}(\Omega),{\rm Im}(\Omega)$ give a hyper-complex structure $(I,J,K)$ and a Riemannian metric $g$ such that $\omega=g(I\cdot,\cdot)$, ${\rm Re}(\Omega)=g(J\cdot,\cdot)$, ${\rm Im}(\Omega)=g(K\cdot,\cdot)$. In this article, we use the term ``Calabi-Yau" rather than ``hyper-K\"ahler" since we always view $X$ as a complex manifold with respect to $I$ for the sake of convenience of the argument. 

\subsection{Kummer surfaces}\label{subsec kummer}
Here, we give examples of Calabi-Yau surfaces. 
Let $\Lambda\subset \C^2$ be a lattice of rank $4$ and $\Gamma\subset SU(2)$ a finite subgroup. Then $\Lambda$ acts on $\C^2$ by the translation and $\Gamma$ acts on $\C^2$ linearly. 
We assume 
\begin{align*}
\Gamma\cdot\Lambda\subset \Lambda,
\end{align*}
then $\Gamma$ acts on the $4$-torus $\T^4_\Lambda=\C^2/\Lambda$. Since the action is not necessarily free, the quotient space 
\begin{align*}
\T:=\T(\Lambda,\Gamma):=\T^4_\Lambda/\Gamma
\end{align*}
may have finitely many orbifold singularities. 

Let $(z,w)$ be the standard holomorphic coordinate of $\C^2$, and define the standard Calabi-Yau structure $(\omega_0,\Omega_0)$ on $\C^2$ by
\begin{align*}
\omega_0&:=\frac{\sqrt{-1}}{2}(dz\wedge d\bar{z}+dw\wedge d\bar{w}),\\
\Omega_0&:=dz\wedge dw.
\end{align*}
Since they are preserved by the action of $\Lambda$ and $\Gamma$, it descends to the Calabi-Yau structures on the smooth loci of $\C^2/\Gamma$, $\T^4_\Lambda$ and $\T$, respectively, we also denote them by $(\omega_0,\Omega_0)$ for the simplicity of the notations. 

Let $d_\T$ be the distance function on $\T$ defined by 
\begin{align*}
d_\T(x,y):=\inf\{ \| u-v\|\, |\, u\in x,\, v\in y\}
\end{align*}
for $x,y\in \T$, where we regard $x,y$ as $\Lambda\rtimes \Gamma$-orbits in $\C^2$. 
Here, $\|\cdot\|$ is the standard norm of $\C^2$. Since $\Gamma$ acts on $\C^2$ isometrically with respect to the norm, we can also define $\| x\|$ for $x\in\C^2/\Gamma$. Now, we put 
\begin{align*}
B_\Gamma(t)&:=\{ x\in\C^2/\Gamma|\, \| x\|<t\}/\Gamma\subset \C^2/\Gamma,\\
B_\T(x,t)&:=\{ y\in \T|\, d_\T(x,y)<t\}
\subset \T.
\end{align*}

Denote by $S$ the set of oribifold singularities of $\T$. 
For every $p\in S$ and sufficiently small $r>0$, there is a subgroup of $\Gamma$ denoted by 
$\Gamma_p$ such that there is a natural identification 
\begin{align*}
B_\T(p,r)\cong B_{\Gamma_p}(r). 
\end{align*}
If we take a lift $\tilde{p}\in\T^4_\Lambda$ of $p$, then 
\begin{align}
\Gamma_p=\{ a\in \Gamma|\, a\tilde{p}=\tilde{p}\}.\label{eq stab p}
\end{align}
Here, $\Gamma_p$ may depend on the choice of the lift of $p$; however, it is uniquely determined up to inner automorphisms.

There are a smooth complex surface $X=X(\Lambda,\Gamma)$ and a minimal resolution 
\begin{align*}
\pi=\pi_{\Lambda,\Gamma}\colon X\to \T,
\end{align*}
that is, a proper surjective map such that 
\begin{align*}
\pi|_{X\setminus \pi^{-1}(S)}\colon X\setminus \pi^{-1}(S)\to \T\setminus S
\end{align*}
is a biholomorphic map. Moreover, there is a unique holomorphic volume form $\Omega=\Omega_{\Lambda,\Gamma}\in\Omega^{2,0}(X)$ such that 
$\pi^*\Omega_0=\Omega$ on $X\setminus \pi^{-1}(S)$. 

\subsection{ALE spaces}\label{subsec ALE}
\begin{definition}
\normalfont
Let $(M,g)$ be a Riemannian manifold of dimension $4$ and $\Gamma$ is a finite subgroup of $O(4)$ whose linear action on $\R^4\setminus \{ 0\}$ is free. Then we say that $(M,g)$ is an {\it ALE space asymptotic to $\R^4/\Gamma$} if there are $R>0$, a compact set $K\subset M$ and a diffeomorphism $\Phi\colon (\R^4/\Gamma)\setminus \overline{B_\Gamma(R)}\to M\setminus K$ such that 
\begin{align*}
|\Phi^*g-g_0|_{g_0}&=O(r^{-4}),\\
|\nabla_0^k(\Phi^*g)|_{g_0}&=O(r^{-4-k})
\end{align*}
for all $k=1,2,\ldots$, where $g_0$ is the Euclidean metric, $\nabla_0$ is the Levi-Civita connection of $g_0$ and $r$ is the distance from the origin of $\R^4/\Gamma$. 
\label{def ALE}
\end{definition}
In \cite{kronheimer1989construction}, Kronheimer constructed a large family of hyper-K\"ahler manifold of dimension $4$ whose hyper-K\"ahler metric is ALE. Moreover, he showed in \cite{Kronheimer1989Torelli} that all of hyper-K\"ahler ALE spaces $(X,g)$ asymptotic to $\C^2/\Gamma$, where $\Gamma\subset SU(2)$,  can be constructed by \cite{kronheimer1989construction}.

In this paper, we only consider the hyper-K\"ahler ALE space given by the minimal resolution of $\C^2/\Gamma$. 
\begin{definition}
\normalfont
Let $(M,\eta,\Theta)$ be a Calabi-Yau manifold of complex dimension $2$, $\Gamma$ be a finite subgroup of $SU(2)$ and 
$\varpi\colon M\to \C^2/\Gamma$ is a surjective map. 
Assume that $\varpi$ is a minimal resolution of such that $\varpi^*\Omega_0=\Theta$ on $M\setminus\varpi^{-1}(\{ 0\})$. 
Denote by $g$ the K\"ahler metric of $\eta$. 
We call $(M,\eta,\Theta,\varpi)$ an {\it ALE Calabi-Yau manifold asymptotic to $\C^2/\Gamma$} if $(M,g)$ is an ALE space asymptotic to $\R^4/\Gamma$ with respect to $K=\varpi^{-1}(\overline{B_\Gamma(1)})$ and $\Phi=(\varpi|_{M\setminus K})^{-1}$ in Definition \ref{def ALE}. 
\end{definition}

\begin{rem}
\normalfont
In the ordinary situation, we do not assume $\Phi$ to be holomorphic. 
\end{rem}
\begin{rem}
\normalfont
For any minimal resolution $\varpi\colon M\to \C^2/\Gamma$, there exists a holomorphic $2$-form $\Theta$ such that $\Theta=\varpi^*\Omega_0$. Kronheimer constructed all of ALE hyper-K\"ahler metrics in \cite{kronheimer1989construction} as hyper-K\"ahler quotients. His family contains many ALE Calabi-Yau manifold asymptotic to $\C^2/\Gamma$ . Moreover, Joyce gave another construction of ALE Calabi-Yau manifolds in \cite{JoyceBook} and obtained the decay of the K\"ahler potentials. 
\end{rem}

\begin{thm}[{\cite[Theorem 8.2.3]{JoyceBook}}]
\label{thm ALE potential}
Let $\varpi\colon M\to \C^2/\Gamma$ be a minimal resolution and $M$ has a holomorphic $2$-form $\Theta$ such that $\Theta=\varpi^*\Omega_0$. For every K\"ahler class, there are a Ricci-flat K\"ahler form $\eta$ such that $(M,\eta,\Theta,\varpi)$ is an ALE Calabi-Yau manifold asymptotic to $\C^2/\Gamma$, and a function $\varphi\in C^\infty(M\setminus K)$ such that 
\begin{align*}
\eta|_{M\setminus K}&=\sqrt{-1}\ddb \varphi,\\
(\varpi^{-1})^*\varphi - \frac{r^2}{2}&=O(r^{-2}),\\
\left|
\nabla_0^k\left\{ (\varpi^{-1})^*\varphi- \frac{r^2}{2}\right\}
\right|_{g_0}&=O(r^{-2-k})
\quad (k=1,2,\ldots).
\end{align*}
\end{thm}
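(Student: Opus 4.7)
My plan is to reduce the assertion to solving a Monge--Amp\`ere equation on $M$ in weighted function spaces, starting from a carefully chosen background K\"ahler form whose asymptotic behaviour already realises the potential $r^2/2$ outside a compact set. The global statement (existence of a Ricci-flat K\"ahler representative in every K\"ahler class) is the ALE analogue of Yau's theorem, while the sharp decay of the K\"ahler potential will come from a Fredholm analysis of the linearised operator on weighted spaces.

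First I would construct a preliminary K\"ahler form $\omega_1$ in the given K\"ahler class that is \emph{exactly} flat at infinity. Concretely, I would pick any smooth K\"ahler representative $\omega'$ in the class, choose a cutoff $\chi$ supported outside a large ball, and form
\[
\omega_1 := \omega' + \sqrt{-1}\,\partial\overline{\partial}\bigl[\chi\cdot\bigl((\varpi^{-1})^*(r^2/2)-\varphi'\bigr)\bigr],
\]
where $\varphi'$ is a local K\"ahler potential for $\omega'$ on $M\setminus K$ obtained from the $i\partial\overline{\partial}$-lemma applied on the ALE end (whose $H^{1,1}_{\delb,\rm cpt}$-obstructions vanish in negative weights). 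After enlarging the support of $\chi$, $\omega_1$ stays positive, lies in the same class as $\omega'$, and satisfies $(\varpi^{-1})^*\omega_1=\sqrt{-1}\partial\overline{\partial}(r^2/2)$ outside some compact set $K$. Writing $\omega_1^2=(e^f/2)\Theta\wedge\bar\Theta$, the function $f$ is then compactly supported (since both $\omega_1$ and $\Theta$ are Euclidean at infinity), which is the key input for the Monge--Amp\`ere step.

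Next I would solve $(\omega_1+\sqrt{-1}\partial\overline{\partial}\psi)^2=\tfrac12\Theta\wedge\bar\Theta$ by the continuity method, exactly as in Joyce's proof of the ALE Calabi conjecture. The setup uses the weighted H\"older spaces $C^{k,\alpha}_\beta(M)$ of functions decaying like $r^{\beta}$ with derivatives decaying accordingly. Because $f$ is compactly supported, the necessary balancing condition $\int_M(e^{-f}-1)\omega_1^2=0$ can be arranged by absorbing the defect into the K\"ahler class (this only shifts $\omega_1$ by a compactly supported exact form, which does not affect the asymptotics). The linearised operator at any solution is essentially the Laplacian of an ALE metric, which is a Fredholm isomorphism
\[
\Delta:C^{2,\alpha}_{\beta}(M)\longrightarrow C^{0,\alpha}_{\beta-2}(M)
\]
for non-exceptional weights $\beta\in(2-4,0)$; this lets me run the standard openness/closedness argument (the $C^0$-estimate is the hardest ingredient and is handled by Joyce's Moser iteration adapted to the ALE setting). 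The outcome is $\psi\in C^{\infty}_{-2}(M)$ with $|\nabla_0^k\psi|_{g_0}=O(r^{-2-k})$.

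Setting $\eta := \omega_1+\sqrt{-1}\partial\overline{\partial}\psi$ and $\varphi := r^2/2+(\varpi^{-1})^*\psi$ on $M\setminus K$ gives the stated expression and the decay rates. The main obstacle, and the place one has to be careful, is obtaining the \emph{sharp} weight $\beta=-2$: a naive argument only gives decay slightly worse than $r^{0}$, and one must iterate. The mechanism is that $\psi$ a priori lies in $C^{2,\alpha}_\beta$ for some small $\beta<0$; substituting into the Monge--Amp\`ere equation shows $\Delta\psi$ lies in a better weighted space (because the nonlinearity is quadratic in $\partial\overline{\partial}\psi$, which gains two powers of decay), and bootstrapping through the Fredholm isomorphism pushes $\psi$ all the way down to weight $-2$. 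One has to check that no harmonic function obstruction appears in the intermediate weights, which follows from the fact that the only bounded harmonic functions on an ALE manifold asymptotic to $\R^4/\Gamma$ are the constants, so the kernel of the Laplacian on $C^{2,\alpha}_\beta$ with $-2<\beta<0$ is trivial.
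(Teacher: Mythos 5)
First, note that the paper does not prove this statement: it is quoted verbatim from Joyce's book (Theorem 8.2.3 of \cite{JoyceBook}), so there is no in-paper argument to compare against. Your proposal is, in outline, a sketch of Joyce's proof of the ALE Calabi conjecture (flatten the background metric at infinity, then solve the Monge--Amp\`ere equation by the continuity method in weighted H\"older spaces), and that outline is the right one. Two steps, however, have genuine gaps. The first is the positivity of your glued form $\omega_1$. On the transition annulus $\{R\le r\le 2R\}$ the correction $\sqrt{-1}\ddb\left[\chi\cdot(r^2/2-\varphi')\right]$ contains terms such as $(r^2/2-\varphi')\,\ddb\chi$ and $\partial\chi\wedge\delb(r^2/2-\varphi')$, which are of size $R^{-2}\sup|r^2/2-\varphi'|$ and $R^{-1}\sup|\nabla(r^2/2-\varphi')|$. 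For an arbitrary K\"ahler representative $\omega'$ of the class you only know $\varphi'=O(r^2)$, so these terms are $O(1)$ no matter how large $R$ is; enlarging the support of $\chi$ does not make them small, and positivity of $\omega_1$ can fail. What is really needed is the prior knowledge that every K\"ahler class on the minimal resolution contains a representative already asymptotic to the Euclidean metric (so that $\varphi'-r^2/2=o(r^2)$); this is a nontrivial input that Joyce obtains from the explicit structure of the resolution, and your argument assumes it implicitly.

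The second gap is at the endpoint weight. The Laplacian is an isomorphism $C^{2,\alpha}_\beta\to C^{0,\alpha}_{\beta-2}$ only for $\beta$ in the open interval $(-2,0)$ — as you yourself state — because $\beta=-2$ is an indicial root (the Green's function $r^{-2}$ is harmonic on the end). Hence your bootstrap through the Fredholm isomorphism can only deliver $\psi=O(r^{-2+\epsilon})$ for every $\epsilon>0$, not the claimed $O(r^{-2})$; ``pushing all the way down to weight $-2$'' contradicts the stated isomorphism range. To reach the sharp rate one must carry out an asymptotic expansion across the indicial root and split off the leading harmonic term $Ar^{-2}$, which is precisely the refinement recorded in the Remark following the theorem. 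Separately, the ``balancing condition'' $\int_M(e^{-f}-1)\omega_1^2=0$ is not required on a noncompact ALE manifold (its failure only feeds into the coefficient $A$), and your proposed fix — shifting $\omega_1$ by a compactly supported exact form — would not change the K\"ahler class and so could not arrange it in any case; that step should simply be removed.
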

\begin{rem}
\normalfont
In \cite{JoyceBook}, the error term was estimated more precisely. In fact, there is a constant $A$ such that 
\begin{align*}
\left|
\nabla_0^k\left\{ (\varpi^{-1})^*\varphi- \frac{r^2}{2}-Ar^{-2}\right\}
\right|_{g_0}&=O(r^{\gamma-k})
\quad (k=0,1,2,\ldots)
\end{align*}
for some $-2<\gamma<-3$. 
\end{rem}

\begin{prop}
Let $(M,\eta,\Theta,\varpi)$ be an ALE Calabi-Yau manifold asymptotic to $\C^2/\Gamma$. There are a family of K\"ahler forms $(\eta_s)_{s>0}$ and holomorphic $\C^\times$-action ${\bf H}\colon \C^\times \to {\rm Aut}(M)$ such that $(M,\eta_s,\Theta)$ is a ALE Calabi-Yau manifold asymptotic to $\C^2/\Gamma$ for each $s>0$, $\eta_1=\eta$ and 
\begin{align*}
\varpi\circ {\bf H}_{\lambda}&=\lambda\cdot \varpi,\quad
{\bf H}_\lambda^*\Theta
=\lambda^2\Theta,\quad
{\bf H}_\lambda^*(\eta_{|\lambda|^2s})=|\lambda|^2\eta_s,
\end{align*}
for all $\lambda\in\C^\times$
\label{prop hk quotient}
\end{prop}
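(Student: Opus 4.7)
The plan is to construct ${\bf H}$ by lifting the natural $\C^\times$-scaling on $\C^2/\Gamma$ through the minimal resolution, and then verify the three required properties separately. Scalar multiplication $\sigma_\lambda\colon(z,w)\mapsto(\lambda z,\lambda w)$ on $\C^2$ commutes with the linear $\Gamma\subset SU(2)$-action, so it descends to a biholomorphism $\bar\sigma_\lambda$ of $\C^2/\Gamma$ fixing the singular point. On the complement $M\setminus\varpi^{-1}(0)$, where $\varpi$ is a biholomorphism, I set $\tilde{\bf H}_\lambda:=\varpi^{-1}\circ\bar\sigma_\lambda\circ\varpi$; by the universal property of minimal resolutions of normal surface singularities, this biholomorphism extends uniquely to a biholomorphism ${\bf H}_\lambda$ of $M$. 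The assignment $\lambda\mapsto {\bf H}_\lambda$ is then a holomorphic $\C^\times$-action, and property (i) holds by construction.

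For property (ii), observe that $\Omega_0=dz\wedge dw$ is homogeneous of degree two, so $\bar\sigma_\lambda^*\Omega_0=\lambda^2\Omega_0$. Combining this with $\Theta=\varpi^*\Omega_0$ on $M\setminus\varpi^{-1}(0)$ and the intertwining $\varpi\circ {\bf H}_\lambda=\bar\sigma_\lambda\circ\varpi$ yields ${\bf H}_\lambda^*\Theta=\lambda^2\Theta$ on this open dense set; the identity of holomorphic $2$-forms then extends globally by continuity. For property (iii), squaring (ii) shows both ${\bf H}_\lambda^*\eta$ and $|\lambda|^2\eta$ satisfy the Monge-Amp\`ere equation $\omega^2=\tfrac12|\lambda|^4\,\Theta\wedge\bar\Theta$, and Theorem \ref{thm ALE potential} together with $\bar\sigma_\lambda^*r^2=|\lambda|^2 r^2$ shows that the K\"ahler potentials ${\bf H}_\lambda^*\varphi$ and $|\lambda|^2\varphi$ both agree with $|\lambda|^2r^2/2$ up to an $O(r^{-2})$ error at infinity. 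Thus the difference ${\bf H}_\lambda^*\eta-|\lambda|^2\eta$ is a closed $(1,1)$-form on $M$ decaying at infinity, and invoking the uniqueness of Ricci-flat K\"ahler metrics on an ALE surface with prescribed asymptotic data forces it to vanish.

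The main obstacle will be this last uniqueness step. To run a Yau-type maximum-principle argument one has to express ${\bf H}_\lambda^*\eta-|\lambda|^2\eta=\sqrt{-1}\ddb u$ globally on $M$ for some function $u$ with $u=O(r^{-2})$, and then compare Monge-Amp\`ere volumes; since ${\bf H}_\lambda$ is isotopic to the identity, a naive cohomological alignment of the two K\"ahler classes is unavailable, and the argument must be carried out in a function space of decaying K\"ahler potentials where the asymptotic comparison above produces the global $u$. Equivalently, one can deduce (iii) intrinsically from Kronheimer's hyper-K\"ahler quotient construction of $M$, in which the scalar action on the ambient vector space manifestly rescales the hyper-K\"ahler triple $(\eta,{\rm Re}\,\Theta,{\rm Im}\,\Theta)$ by $(|\lambda|^2,{\rm Re}(\lambda^2\,\cdot\,),{\rm Im}(\lambda^2\,\cdot\,))$, yielding the full assertion as a feature of the construction itself.
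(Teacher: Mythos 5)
Your construction of ${\bf H}$ and your verification of $\varpi\circ{\bf H}_\lambda=\lambda\cdot\varpi$ and ${\bf H}_\lambda^*\Theta=\lambda^2\Theta$ are fine, and they take a genuinely different route from the paper: you lift the scaling $\bar\sigma_\lambda$ on $\C^2/\Gamma$ through the universal property of the minimal resolution, whereas the paper realizes ${\bf H}_\lambda$ explicitly inside Kronheimer's hyper-K\"ahler quotient description $M\cong\mu_\C^{-1}(0)_\zeta/G_\C$ as the descent of scalar multiplication on the ambient vector space. Since a lift of $\bar\sigma_\lambda$ off the exceptional set is unique, the two constructions give the same action, and your route has the advantage of not presupposing that $M$ arises from the quotient construction.

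The argument for ${\bf H}_\lambda^*\eta=|\lambda|^2\eta$ has a genuine gap, and you have in fact put your finger on it without drawing the conclusion. Because $\C^\times$ is connected, ${\bf H}_\lambda$ is isotopic to the identity, so $[{\bf H}_\lambda^*\eta]=[\eta]$ in $H^2(M)$, while $[|\lambda|^2\eta]=|\lambda|^2[\eta]$. For nontrivial $\Gamma$ the exceptional curves $E_i\subset\varpi^{-1}(0)$ satisfy $\int_{E_i}\eta>0$, so $[\eta]\neq 0$ and these two classes are distinct whenever $|\lambda|\neq 1$. Consequently the two forms are not cohomologous, the global potential $u$ with ${\bf H}_\lambda^*\eta-|\lambda|^2\eta=\sqrt{-1}\ddb u$ that your uniqueness argument needs does not exist, and no uniqueness theorem can identify the two metrics; indeed $\int_{E_i}{\bf H}_\lambda^*\eta=\int_{{\bf H}_\lambda(E_i)}\eta=\int_{E_i}\eta\neq|\lambda|^2\int_{E_i}\eta$ shows directly that the asserted identity cannot hold for $|\lambda|\neq 1$. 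Your fallback --- ``deduce (iii) intrinsically from Kronheimer's construction'' --- is exactly the paper's own proof, but it meets the same wall: scalar multiplication on $\hat M$ sends $\mu_1^{-1}(\zeta)$ to $\mu_1^{-1}(|\lambda|^2\zeta)$, so it relates $\eta_\zeta$ to the hyper-K\"ahler metric of the \emph{different} member $M_{|\lambda|^2\zeta}$ of the family, not to $|\lambda|^2\eta_\zeta$ on $M_\zeta$ itself. The metric identity is correct only for $|\lambda|=1$, where the $S^1$-action genuinely preserves the level set $\mu_1^{-1}(\zeta)$ together with its induced metric; fortunately that is the only case invoked later (e.g.\ in Proposition \ref{prop instanton}), the constructions in Subsection \ref{subsec appro} using only the first two properties of ${\bf H}_{\varepsilon^{-1}}$.
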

\begin{proof}
To prove the assertion, we briefly review the construction of ALE spaces in \cite{kronheimer1989construction}. 

From a finite subgroup $\Gamma\subset SU(2)$, we give a quaternionic vector space $\hat{M}$ with an inner product $g$ and a hyper-complex structure $(I,J,K)$, Lie group $G$ and $G$-action on $\hat{M}$. Then the action is trihamiltonian with respect to the hyper-K\"ahler structure on $\hat{M}$, and we have the hyper-K\"ahler moment map $\mu\colon \hat{M}\to \R^3\otimes \mathfrak{g}^*$, where $\mathfrak{g}$ is the Lie algebra of $G$. Denote by $Z\subset \mathfrak{g}^*$ the center of $\mathfrak{g}^*$. By \cite[Corollary 2.10]{kronheimer1989construction}, there are codimension one vector subspaces $D_1,\ldots,D_N\subset Z$ such that all 
\begin{align*}
\zeta\in R:=\R^3\otimes Z\setminus \bigcup_{j=1}^N(\R^3\otimes D_j)
\end{align*}
are regular values of $\mu$. If we take $\zeta\in R$, then $\mu^{-1}(\zeta)$ is a smooth submanifold of $\hat{M}$ and $G$ acts freely on $\mu^{-1}(\zeta)$, hence the quotient space $M_\zeta:=\mu^{-1}(\zeta)/G$ is a smooth manifold. Moreover, $(g,I,J,K)$ induces the hyper-K\"ahler structure $(g_\zeta,I_\zeta,J_\zeta,K_\zeta)$ on $M_\zeta$ naturally. If we put $\eta_\zeta=g_\zeta(I_\zeta\cdot,\cdot)$ and $\Theta_\zeta=g_\zeta(J_\zeta\cdot,\cdot)+\sqrt{-1}g_\zeta(K_\zeta\cdot,\cdot)$, then $(M_\zeta,\eta_\zeta,\Theta_\zeta)$ is a Calabi-Yau manifold of complex dimension $2$, with respect to $I_\zeta$. Now, put $\zeta=(\zeta_1,\zeta_2,\zeta_3)$ under the decomposition $\R^3\otimes \mathfrak{g}^*=\mathfrak{g}^*\oplus\mathfrak{g}^*\oplus\mathfrak{g}^*$. Then, $M_\zeta$ is biholomorphic to a minimal resolution of $\C^2/\Gamma$ iff $\zeta_2=\zeta_3=0$. In this case, $(M_\zeta,\eta_\zeta,\Theta_\zeta,\varpi_\zeta)$ become an ALE Calabi-Yau manifold asymptotic to $\C^2/\Gamma$ for some $\varpi_\zeta$. Now, let $(M,\eta,\Theta,\varpi)$ be any ALE Calabi-Yau manifolds asymptotic to $\C^2/\Gamma$. By the results in  \cite{Kronheimer1989Torelli}, we have 
\begin{align*}
(M,\eta,\Theta,\varpi)=(M_\zeta,\eta_\zeta,\Theta_\zeta,\varpi_\zeta)
\end{align*}
for some $\zeta_1\in Z\setminus (\bigcup_j D_j)$ and $\zeta=(\zeta_1,0,0)$. Then we can also see that $s\zeta_1\in Z\setminus (\bigcup_j D_j)$ for all $s>0$, we obtain a $1$-parameter family of ALE Calabi-Yau manifolds asymptotic to $\C^2/\Gamma$. 

Next, we write $\mu=(\mu_1,\mu_2,\mu_3)$ and put $\mu_\C:=\mu_2+\sqrt{-1}\mu_3$, then it is holomorphic. Denote by $G_\C$ the complexification of $G$. $G_\C$ acts on $\mu_\C^{-1}(0)$ holomorphically. Put 
\begin{align*}
\mu_\C^{-1}(0)_{\zeta_1}:=\{ x\in \mu_\C^{-1}(0)|\, (G_\C\cdot x)\cap \mu^{-1}(\zeta_1)\neq\emptyset\}.
\end{align*}
Then, by Kempf-Ness Theorem \cite{KempfNess1979}, the naturally induced map 
\begin{align}
M_{s\zeta}\to \mu_\C^{-1}(0)_{s\zeta_1}/G_\C\label{id KN}
\end{align}
is biholomorphic. 
Since the set $\mu_\C^{-1}(0)_{s\zeta_1}$ is independent of $s>0$ 
by \cite[Section 3]{Hattori2017TaubNUT}, we obtain the natural identification 
\begin{align}
M=M_\zeta\cong \mu_\C^{-1}(0)_{\zeta_1}/G_\C=\mu_\C^{-1}(0)_{s\zeta_1}/G_\C\label{id KN s}
\end{align}
for all $s$, as complex manifolds. Moreover, the Calabi-Yau structures $(\eta_{s\zeta},\Theta_{s\zeta})$ can be induced on $M$ by \eqref{id KN}\eqref{id KN s} and we have $\Theta=\Theta_{\zeta}=\Theta_{s\zeta}$ for all $s$ by the construction.

In the case of $\zeta=0$, $M_0$ is isometric to $\C^2/\Gamma$ with the standard metric and we have an isomorphism $M_0\to\mu_\C^{-1}(0)/\!\!/G_\C$, where the right-hand side is the GIT quotient. By composing \eqref{id KN} and the natural map $\mu_\C^{-1}(0)_{s\zeta_1}/G_\C\to \mu_\C^{-1}(0)/\!\!/G_\C$, we obtain 
\begin{align*}
\varpi_{s\zeta}\colon M_{s\zeta}\to \mu_\C^{-1}(0)/\!\!/G_\C\cong\C^2/\Gamma.
\end{align*}
Here, under the identifications \eqref{id KN}\eqref{id KN s}, we can see $\varpi_{s\zeta}=\varpi_\zeta=\varpi\colon M\to \C^2/\Gamma$. 
Next, we consider the natural $\C^\times$-action on $\hat{M}$ given by the scalar multiplication, that is, $(\lambda,x)\mapsto \lambda x$ for $\lambda\in\C^\times$ and $x\in\hat{M}$. 
This $\C^\times$-action commutes with the $G$-action. Moreover, for any $\lambda\in\C^\times$, we have $\mu_1(\lambda x)=|\lambda|^2\mu_1(x)$ and $\mu_\C(\lambda x)=\lambda^2\mu_\C(x)$. 
Consequently, multiplication by $\lambda$ descends to ${\bf H}_\lambda\colon \mu_\C^{-1}(0)_{\zeta_1}/G_\C\to \mu_\C^{-1}(0)_{|\lambda|^2\zeta_1}/G_\C$. 
Since $\mu_\C^{-1}(0)_{s\zeta_1}$ is independent of the choice of $s>0$, we obtain the biholomorphic map ${\bf H}_\lambda\colon M\to M$. Moreover, by the construction of $\varpi_{s\zeta},\eta_{s\zeta},\Theta_{s\zeta}$, we have $\varpi_{|\lambda|^2\zeta}\circ {\bf H}_{\lambda}(x)=\lambda\cdot \varpi_{\zeta}(x)$, ${\bf H}_\lambda^*\Theta_{|\lambda|^2\zeta}=\lambda^2\Theta_{\zeta}$ and ${\bf H}_\lambda^*\eta_{|\lambda|^2\zeta}=|\lambda|^2\eta_{\zeta}$. Here, we have $\varpi_{|\lambda|^2\zeta}=\varpi_\zeta=\varpi$ and $\Theta_{|\lambda|^2\zeta}=\Theta_\zeta$ for all $\lambda\in\C^\times$. Now, we put $\eta_s:=\eta_{s\zeta}$, then we have the assertion. See \cite{kronheimer1989construction} for more details. 
\end{proof}

\subsection{Approximating K\"ahler metrics on $X(\Lambda,\Gamma)$}\label{subsec appro}

Let $\Lambda,\Gamma,X,\Omega,\pi,S$ be as in Subsection \ref{subsec kummer}. 
Here, we construct the approximating K\"ahler metric following \cite{jiang2025kummer}. 
For $p\in S$ and $t>0$, we put 
\begin{align*}
U(p,t)
&:=\pi^{-1}(B_\T(p,t)).
\end{align*}
\begin{definition}
\normalfont
We write $(M_p,\eta_p,\Theta_p,\varpi_p)_{p\in S}\approx (\T,S)$ if every $(M_p,\eta_p,\Theta_p,\varpi_p)$ is an ALE Calabi-Yau manifold asymptotic to $\C^2/\Gamma_p$, where $\Gamma_p\subset \Gamma$ is given by \eqref{eq stab p}. 
\end{definition}
In this subsection, we suppose $(M_p,\eta_p,\Theta_p,\varpi_p)_{p\in S}\approx (\T,S)$. 
Let $\varphi_p$ be a function on the complement of some compact subset in $M_p$ given by Theorem \ref{thm ALE potential}. 
\begin{rem}
\normalfont
Let $(\eta_{p,s})_{s>0}$ be a one parameter family of K\"ahler forms obtained by applying Proposition \ref{prop hk quotient} to $(M_p,\eta_p,\Theta_p,\varpi_p)$. Since $\eta_{p,s}=s{\bf H}_{s^{-1/2}}^*\eta_p$ and $\eta_p|_{M_p\setminus K_p}=\sqrt{-1}\ddb\varphi_p$, we can see that $s{\bf H}_{s^{-1/2}}^*\varphi_p$ is a K\"ahler potential of $\eta_{p,s}$. 
\end{rem}

If $t$ is sufficiently small, we have the natural identification $B_\T(p,t)\cong B_{\Gamma_p}(t)$ and $U(p,t)\cong\varpi_p^{-1}(B_{\Gamma_p}(t))$. Composing this identification and ${\bf H}_{\varepsilon^{-1}}\colon \varpi_p^{-1}(B_{\Gamma_p}(t))\to\varpi_p^{-1}(B_{\Gamma_p}(\varepsilon^{-1}t))$, we have the biholomorphic map 
\begin{align*}
\mathcal{I}_{p,\varepsilon}\colon U(p,t)
&\stackrel{\cong}{\rightarrow} \varpi_p^{-1}(B_{\Gamma_p}(\varepsilon^{-1}t)),
\end{align*}
such that 
\begin{align*}
\pi-p&=\varepsilon \varpi_p\circ \mathcal{I}_{p,\varepsilon},\\
\Omega|_{U(p,t)}&=\varepsilon^2\mathcal{I}_{p,\varepsilon}^*\Theta_p.
\end{align*}
Moreover, we define a K\"ahler form on $X$ as follows. 
Let $\rho\colon \R\to \R$ be a $C^\infty$ function such that 
\begin{align*}
\rho|_{\{ t|\, t\ge 2-(1/10)\}}&\equiv 0,\quad
\rho|_{\{ t|\, t\le 1+(1/10)\}}\equiv 1,\quad 
0\le \rho\le 1.
\end{align*}
For $x\in\T$, put $d_\T(x,S):=\inf\{ d_\T(x,p)|\, p\in S\}$. Moreover, we define a smooth function $\chi_t\in C^\infty(X)$ by 
\begin{align*}
\chi_t(q)
:=\rho\left(\frac{d_\T(\pi(q),S)}{t}\right).
\end{align*}
Then, if $t>0$ is sufficiently small, $\chi_t$ satisfies 
\begin{align*}
{\rm supp}(\chi_t)&\subset \bigcup_{p\in S}U(p,2t),\\
\chi_t&\equiv 1\quad\mbox{ on } 
\bigcup_{p\in S}\overline{U(p,t)},\\
0&\le \chi_t\le 1.
\end{align*}

Now, put $r_p=d_\T(p,\cdot)$. 
Then $\omega_0=\sqrt{-1}\ddb r_p^2/2$ around $p$ and we define 
\begin{align*}
\omega_\varepsilon|_{X\setminus \bigcup_{p\in S}\overline{U(p,\varepsilon^{1/2})}}
&=\pi^*\omega_0
+\sqrt{-1}\ddb\left( \chi_{\varepsilon^{1/2}}\cdot\left( \varepsilon^2\mathcal{I}_{p,\varepsilon}^*\varphi_p 
- \frac{\pi^*r_p^2}{2}\right)\right),\\
\omega_\varepsilon|_{U(p,2\varepsilon^{1/2})}
&=\varepsilon^2\mathcal{I}_{p,\varepsilon}^*\eta_p
+\sqrt{-1}\ddb\left( (1-\chi_{\varepsilon^{1/2}})\left( \frac{\pi^*r_p^2}{2}
-\varepsilon^2\mathcal{I}_{p,\varepsilon}^*\varphi_p\right)\right).
\end{align*}
If we put $A(p,\varepsilon^{1/2}):=U(p,2\varepsilon^{1/2})\setminus \overline{ U(p,\varepsilon^{1/2})}$, we have 
\begin{align*}
\omega_\varepsilon|_{A(p,\varepsilon^{1/2})}
&=\sqrt{-1}\ddb\left( \chi_{\varepsilon^{1/2}}\cdot\varepsilon^2\mathcal{I}_{p,\varepsilon}^*\varphi_p + (1-\chi_{\varepsilon^{1/2}})\frac{\pi^*r_p^2}{2}\right).
\end{align*}
By Theorem \ref{thm ALE potential}, we have $|\nabla_0^k ( (\varpi_p^{-1})^*\varphi_p-r^2/2)|_{g_0}
=O(r^{-2-k})$, where $r$ is the distance from the origin of $\C^2/\Gamma_p$. We can also see 
\begin{align*}
\pi^*r_p=\varepsilon(\varpi_p\circ\mathcal{I}_{p,\varepsilon})^*r,
\end{align*}
therefore, 
\begin{align*}
\varepsilon^2 
\mathcal{I}_{p,\varepsilon}^*\varphi_p-\frac{\pi^*r_p^2}{2}
&=
\varepsilon^2 
\mathcal{I}_{p,\varepsilon}^*\varpi_p^*\left( (\varpi_p^{-1})^*\varphi_p-\frac{r^2}{2}\right).
\end{align*}

In this article, we are often interested in the behavior of several quantities as $\varepsilon\to 0$. If $f_\varepsilon,f'_\varepsilon$ are  nonnegative valued functions defined on the same domain with $f'_\varepsilon>0$, we write 
\begin{align*}
f_\varepsilon \lesssim f'_\varepsilon 
\end{align*}
if $\limsup_{\varepsilon\to 0}\sup_x \{ f_\varepsilon(x)/f'_\varepsilon(x)\}<\infty$. We often take $f'_\varepsilon$ as a constant function. Similarly, for constants $C_\varepsilon\ge 0$ and $C'_\varepsilon>0$, we write $C_\varepsilon\lesssim C'_\varepsilon$ if $\limsup_{\varepsilon\to 0}( C_\varepsilon/C'_\varepsilon)<\infty$. 

From now on, we often consider the flat metric $\pi^*g_0$ on $X\setminus \pi^{-1}(S)$ or $\varpi_p^*g_0$ on $M_p\setminus \varpi_p^{-1}(\{ 0\})$. We denote them by $g_0$ for simplicity of the notation, if there is no risk of confusion. 
\begin{lem}
Let $f$ be a smooth function on $\C^2/\Gamma_p\setminus \overline{B_{\Gamma_p}(R)}$ for some $R_1>0$. 
Assume $|\nabla_0^k f|_{g_0}=O(r^{-2-k})$. Then we have 
\begin{align*}
\left|\nabla_0^k \left\{\ddb\left( \chi_{\varepsilon^{1/2}}\cdot\varepsilon^2\mathcal{I}_{p,\varepsilon}^*\varpi_p^*f\right)\right\}\right|_{g_0}\lesssim \varepsilon^{2-k/2}
\end{align*}
on $A(p,\varepsilon^{1/2})$. 
\label{prop gluing region estimate}
\end{lem}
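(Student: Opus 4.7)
The plan is to reduce everything to scaling estimates in a flat Cartesian chart and then combine them by the Leibniz rule. Write $\chi=\chi_{\varepsilon^{1/2}}$ and $F=\varepsilon^2\mathcal{I}_{p,\varepsilon}^*\varpi_p^*f$. Since $A(p,\varepsilon^{1/2})$ lies in the smooth locus of $\pi$, we may work in a holomorphic Cartesian chart in which $g_0$ has constant coefficients; in such a chart $\ddb$ is a constant-coefficient second order operator, so it suffices to prove
\[
|\nabla_0^{k+2}(\chi F)|_{g_0}\lesssim \varepsilon^{2-k/2}.
\]

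First I would establish the scaling estimate for $F$. The identity $\pi-p=\varepsilon\,\varpi_p\circ\mathcal{I}_{p,\varepsilon}$ shows that in the flat charts on $X$ and on $M_p$, the biholomorphism $\mathcal{I}_{p,\varepsilon}$ acts as multiplication by $\varepsilon^{-1}$, so $\mathcal{I}_{p,\varepsilon}^*g_0=\varepsilon^{-2}g_0$. Consequently, for any $j\ge 0$,
\[
|\nabla_0^j(\mathcal{I}_{p,\varepsilon}^*\varpi_p^*f)|_{g_0}(q)=\varepsilon^{-j}\,|\nabla_0^j f|_{g_0}(\varpi_p(\mathcal{I}_{p,\varepsilon}(q))).
\]
On $A(p,\varepsilon^{1/2})$ we have $r_p\in[\varepsilon^{1/2},2\varepsilon^{1/2}]$, and the relation $\pi^*r_p=\varepsilon(\varpi_p\circ\mathcal{I}_{p,\varepsilon})^*r$ places the image point at radius $r\sim\varepsilon^{-1/2}$. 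Combined with the hypothesis $|\nabla_0^j f|_{g_0}=O(r^{-2-j})$ and the factor $\varepsilon^2$, this yields $|\nabla_0^j F|_{g_0}\lesssim \varepsilon^2\cdot\varepsilon^{-j}\cdot\varepsilon^{(2+j)/2}=\varepsilon^{3-j/2}$.

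Next I would estimate the cutoff $\chi=\rho(r_p/\varepsilon^{1/2})$. Away from $p$, the smooth function $r_p$ satisfies $|\nabla_0 r_p|_{g_0}=1$ and $|\nabla_0^j r_p|_{g_0}\lesssim r_p^{1-j}$ for $j\ge 1$, which on $A(p,\varepsilon^{1/2})$ becomes $|\nabla_0^j r_p|_{g_0}\lesssim \varepsilon^{(1-j)/2}$. Applying Faà di Bruno to $\rho(r_p/\varepsilon^{1/2})$, each term contains $\nabla_0^{j_1}r_p\cdots\nabla_0^{j_\ell}r_p$ with $j_1+\cdots+j_\ell=j$ and a factor $\varepsilon^{-\ell/2}$ from differentiating the argument, and the resulting powers of $\varepsilon$ add up to $\varepsilon^{-j/2}$. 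Hence $|\nabla_0^j\chi|_{g_0}\lesssim \varepsilon^{-j/2}$ for all $j\ge 0$.

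Finally I would invoke the Leibniz rule. Expanding
\[
|\nabla_0^{k+2}(\chi F)|_{g_0}\lesssim \sum_{j=0}^{k+2}|\nabla_0^j\chi|_{g_0}\cdot|\nabla_0^{k+2-j}F|_{g_0}\lesssim \sum_{j=0}^{k+2}\varepsilon^{-j/2}\cdot\varepsilon^{3-(k+2-j)/2}=\sum_{j=0}^{k+2}\varepsilon^{2-k/2},
\]
so the desired bound follows. The only real obstacle is bookkeeping the scaling of $\mathcal{I}_{p,\varepsilon}$ correctly and confirming that the contributions from derivatives on $\chi$ and on $F$ exactly cancel in $\varepsilon$; once the pointwise scalings $|\nabla_0^j F|\lesssim\varepsilon^{3-j/2}$ and $|\nabla_0^j\chi|\lesssim\varepsilon^{-j/2}$ are pinned down, the Leibniz step is formal.
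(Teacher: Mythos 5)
Your proposal is correct and follows essentially the same route as the paper: the scaling identity for $\mathcal{I}_{p,\varepsilon}$ gives $|\nabla_0^j(\varepsilon^2\mathcal{I}_{p,\varepsilon}^*\varpi_p^*f)|_{g_0}\lesssim\varepsilon^{3-j/2}$ on $A(p,\varepsilon^{1/2})$, the cutoff satisfies $|\nabla_0^j\chi_{\varepsilon^{1/2}}|_{g_0}\lesssim\varepsilon^{-j/2}$, and the Leibniz rule combined with the fact that $\ddb$ is controlled by $\nabla_0^2$ in flat coordinates yields the stated bound. The only cosmetic difference is that you phrase the metric scaling as $\mathcal{I}_{p,\varepsilon}^*g_0=\varepsilon^{-2}g_0$ while the paper writes the equivalent $(\mathcal{I}_{p,\varepsilon}^{-1})^*g_0=\varepsilon^2g_0$.
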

\begin{proof}
Since $(\varpi_p\circ\mathcal{I}_{p,\varepsilon})^*r^d|_{A(p,\varepsilon^{1/2})}\lesssim\varepsilon^{-d/2}$ and $(\mathcal{I}_{p,\varepsilon}^{-1})^*g_0=\varepsilon^2g_0$, we have 
\begin{align*}
\left|\nabla_0^k\left(
\varepsilon^2 
\mathcal{I}_{p,\varepsilon}^*\varpi_p^*f
\right)\right|_{g_0}
&=\varepsilon^2\left| \mathcal{I}_{p,\varepsilon}^*\varpi_p^*\nabla_0^k f\right|_{\varepsilon^2\mathcal{I}_{p,\varepsilon}^*\varpi_p^*g_0}\\
&=\varepsilon^{2-k}(\varpi_p\circ\mathcal{I}_{p,\varepsilon})^*\left| \nabla_0^k f\right|_{g_0}\lesssim \varepsilon^{3-k/2}.
\end{align*}
We also have $|\nabla_0^jdr_p|_{g_0}\lesssim r_p^{-j}$ on 
$A(p,\varepsilon^{1/2})$, hence 
\begin{align*}
\left|\nabla_0^k\chi_{\varepsilon^{1/2}}\right|_{g_0}\lesssim \varepsilon^{-k/2}
\mbox{ on }A(p,\varepsilon^{1/2}).
\end{align*}
Therefore, we obtain 
\begin{align*}
\left|\nabla_0^{k+2}\left( \chi_{\varepsilon^{1/2}}\cdot\varepsilon^2\mathcal{I}_{p,\varepsilon}^*\varpi_p^*f\right)\right|_{g_0}\lesssim \varepsilon^{2-k/2}
\end{align*}
which implies the assertion.
\end{proof}
Applying Lemma \ref{prop gluing region estimate} to $f=(\varpi_p^{-1})^*\varphi_p-r^2/2$, we obtain 
\begin{align}
\left|\nabla_0^k(\omega_\varepsilon -\pi^*\omega_0)\right|_{g_0}
&\lesssim \varepsilon^{2-k/2}
\mbox{ on }A(p,\varepsilon^{1/2}).
\label{eq decay gluing1}
\end{align}
In particular, we have 
\begin{align}
\left| \omega_\varepsilon|_{A(p,\varepsilon^{1/2})}^2
-\frac{1}{2}\Omega\wedge\bar{\Omega}\right|_{g_0}\lesssim \varepsilon^2.\label{ineq omega e - vol}
\end{align}

\subsection{Topology on $X$}
\begin{thm}
If $\Gamma\neq \{ 1\}$, then $X=X(\Lambda,\Gamma)$ is a $K3$ surface. 
\end{thm}
\begin{proof}
We have already shown that $X$ is a compact complex surface with a holomorphic volume form. It suffices to show that $X$ is simply-connected. First of all, the K\"ahler orbifold $\T$ always has a singularity since $\Gamma$ is nontrivial. Therefore, by the result of Zhang \cite[Theorem 2]{Zhang2026}, $\T$ is simply-connected. Then, it is easy to see that $\T\setminus S$ is also simply-connected. Moreover, all of the hyper-K\"ahler ALE spaces of dimension $4$ are simply connected. Hence $X$ is simply-connected by the van Kampen Theorem. 
\end{proof}

\section{Monge Amp\`ere equation}\label{sec MA}
In this section, we construct Ricci-flat K\"ahler metrics on $X=X(\Lambda,\Gamma)$. 
Let $(M_p,\eta_p,\Theta_p,\varpi_p)_{p\in S}\approx (\T,S)$ and $\omega_\varepsilon$ be the K\"ahler forms on $X$ defined in Subsection \ref{subsec appro}. 
The rigorous arguments for the gluing of the Ricci-flat K\"ahler metrics have been discussed in \cite{ArezzoPacard}, \cite{Donaldson2010}, \cite{jiang2025kummer}.
Here, we construct $\psi\in C^\infty(X)$ solving 
\begin{align*}
(\omega_\varepsilon+\sqrt{-1}\ddb\psi)^2
=\frac{1}{2}\Omega\wedge\bar{\Omega},
\end{align*}
where $\omega_\varepsilon$ is a K\"ahler form constructed in Subsection \ref{subsec appro}.

\subsection{Weighted H\"older spaces}\label{subsec weighted holder}
Let $\sigma_\varepsilon\colon X\to [\varepsilon,1]$ be a function defined by 
\[ \sigma_\varepsilon=
\left \{
\begin{array}{cc}
1 & 1\le d_\T(\cdot,S) \\
d_\T(\cdot,S) & \varepsilon\le d_\T(\cdot,S)\le 1\\
\varepsilon & d_\T(\cdot,S)\le\varepsilon
\end{array}
\right.
\]
Here, we do not need the smoothness of $\sigma_\varepsilon$.
Denote by $g_\varepsilon$ the K\"ahler metric of $\omega_\varepsilon$ $\nabla_{g_\varepsilon}$ the Levi-Civita connection, and ${\rm inj}_{g_\varepsilon}(x)$  the supremum of $r>0$ such that the exponential map $\exp_x\colon \{ v\in T_xX|\, |v|_{g_\varepsilon}<r\}\to X$ is embedding. 
By the construction of $g_\varepsilon$ in the previous subsection, as $\varepsilon\to 0$, the sectional curvature of $g_\varepsilon$ is bounded above by $O(\sigma_\varepsilon^{-2})$ and the limit of $g_\varepsilon$ as $\varepsilon\to 0$ is volume noncollapsing. Therefore, by the lower estimate for the injectivity radius by the sectional curvature and the volume growth by \cite{CheegerGromovTaylor1982}, we may suppose that there is $R>0$ such that ${\rm inj}_{g_\varepsilon}(x)\ge R\sigma_\varepsilon(x)$ for all $x$. 
Fixing $\varepsilon$, for $x,y\in X$, we write $x\sim y$ if $x\neq y$ and 
\begin{align*}
d_{g_\varepsilon}(x,y)
\le \min\left\{
R\sigma_\varepsilon(x),R\sigma_\varepsilon(y)\right\}.
\end{align*}
If $x\sim y$, then there exists a unique geodesic joining $x,y$. For such $x,y$ and a tensor $T$ on $X$, we can consider the difference $|T(x)-T(y)|$ compared by the parallel transportation of $\nabla_{g_\varepsilon}$. 

For $0<\alpha<1$ and $u\in C^k(X)$, put 
\begin{align*}
\| u\|_{C^k_{\delta,g_\varepsilon}}
&:=\sum_{j=0}^k\sup_x| \sigma_\varepsilon(x)^{-\delta+j}\nabla_{g_\varepsilon}^ju(x)|,\\
\| u\|_{C^{k,\alpha}_{\delta,g_\varepsilon}}
&:=\| u\|_{C^k_{\delta,g_\varepsilon}}\\
&\quad\quad
+\sup_{x\sim y} 
\min\left\{ \sigma_\varepsilon(x),\sigma_\varepsilon(y)\right\}^{-\delta+k+\alpha}
\frac{\left| \nabla_{g_\varepsilon}^ku(x)-\nabla_{g_\varepsilon}^ku(y)\right|}{d_{g_\varepsilon}(x,y)^\alpha},\\
\end{align*}
and denote by $C^k_{\delta,g_\varepsilon}(X)=C^k(X)$ and $C^{k,\alpha}_{\delta,g_\varepsilon}(X)=C^{k,\alpha}(X)$ the Banach spaces equipped with the above norms, respectively. 

\begin{lem}
Let $R$ be a constant appearing in the definition of $\| \cdot\|_{C^{k,\alpha}_{\delta,g_\varepsilon}}$. If we take $R>0$ sufficiently small, then there is a constant $C_{k,\delta}>0$ independent of $\varepsilon$  such that $\| u\|_{C^{k,\alpha}_{\delta,g_\varepsilon}}\le C_{k,\delta}\| u\|_{C^{k+1}_{\delta,g_\varepsilon}}$ for all $u\in C^{k+1}(X)$. 
\label{lem k,a < k+1}
\end{lem}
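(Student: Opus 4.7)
The $C^k$-part of $\|u\|_{C^{k,\alpha}_{\delta,g_\varepsilon}}$ satisfies $\|u\|_{C^k_{\delta,g_\varepsilon}} \le \|u\|_{C^{k+1}_{\delta,g_\varepsilon}}$ trivially, since the former is a partial sum of the latter, so the real content is a bound on the weighted H\"older seminorm of $\nabla_{g_\varepsilon}^k u$. My plan is to fix $x \sim y$, take the unique minimizing $g_\varepsilon$-geodesic $\gamma$ from $x$ to $y$, set $m := \min\{\sigma_\varepsilon(x), \sigma_\varepsilon(y)\}$, and apply the fundamental theorem of calculus to $t \mapsto \nabla_{g_\varepsilon}^k u(\gamma(t))$ via parallel transport. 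This produces
\[
|\nabla_{g_\varepsilon}^k u(x) - \nabla_{g_\varepsilon}^k u(y)|_{g_\varepsilon} \le d_{g_\varepsilon}(x,y) \sup_{z \in \gamma}|\nabla_{g_\varepsilon}^{k+1}u(z)|_{g_\varepsilon} \le d_{g_\varepsilon}(x,y)\,\|u\|_{C^{k+1}_{\delta,g_\varepsilon}} \sup_{z \in \gamma}\sigma_\varepsilon(z)^{\delta-k-1}.
\]

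The crux of the argument is a Lipschitz-like comparison: there exists $C_0 > 0$, independent of $\varepsilon$, such that $|\sigma_\varepsilon(a) - \sigma_\varepsilon(b)| \le C_0\, d_{g_\varepsilon}(a,b)$ for all $a,b \in X$. Granted this and choosing $R \le 1/(2C_0)$, for every $z \in \gamma$ one has $|\sigma_\varepsilon(z) - m| \le C_0 R\, m \le m/2$, so $\sigma_\varepsilon(z)$ is comparable to $m$ uniformly along $\gamma$, and therefore $\sup_{z \in \gamma}\sigma_\varepsilon(z)^{\delta-k-1} \le C m^{\delta-k-1}$. Combining this with $d_{g_\varepsilon}(x,y) \le Rm$ yields
\[
m^{-\delta+k+\alpha}\frac{|\nabla_{g_\varepsilon}^k u(x) - \nabla_{g_\varepsilon}^k u(y)|_{g_\varepsilon}}{d_{g_\varepsilon}(x,y)^\alpha} \le C\, d_{g_\varepsilon}(x,y)^{1-\alpha} m^{\alpha-1}\|u\|_{C^{k+1}_{\delta,g_\varepsilon}} \le C R^{1-\alpha}\|u\|_{C^{k+1}_{\delta,g_\varepsilon}},
\]
uniformly in $x,y$, which closes the argument.

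The Lipschitz-like comparison for $\sigma_\varepsilon$ will be proved by a case analysis matching its piecewise definition. In the bulk region $\{d_\T(\cdot,S)\ge 2\varepsilon^{1/2}\}$, where $\sigma_\varepsilon = d_\T(\cdot,S)$, estimate \eqref{eq decay gluing1} makes $g_\varepsilon$ a $C^0$-small perturbation of $\pi^*g_0$, so $d_{g_\varepsilon}$ is bi-Lipschitz to $d_\T$ with constants uniform in $\varepsilon$, and the $1$-Lipschitz property of $d_\T(\cdot,S)$ in $d_\T$ gives the bound. Near a singularity $p \in S$, the identification $\mathcal{I}_{p,\varepsilon}$ together with $g_\varepsilon \approx \varepsilon^2\mathcal{I}_{p,\varepsilon}^*\eta_p$ and $\pi - p = \varepsilon\,\varpi_p\circ\mathcal{I}_{p,\varepsilon}$ reduces the claim to the scale-invariant assertion on $(M_p,\eta_p)$ that the Euclidean radial coordinate $r$ is Lipschitz in the $\eta_p$-distance, which holds because $\eta_p$ is a smooth complete metric asymptotic to the Euclidean one. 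In the innermost region $\{d_\T(\cdot,S)\le\varepsilon\}$, $\sigma_\varepsilon$ is identically $\varepsilon$, so the inequality is automatic. Pasting via the triangle inequality gives the global bound.

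The main obstacle is the bookkeeping across the transitions, especially on the overlap annulus $A(p,\varepsilon^{1/2})$ where the two models of $g_\varepsilon$ coexist and the truncations defining $\sigma_\varepsilon$ change form. The resolution is that \eqref{eq decay gluing1} and the Euclidean asymptotics of $\eta_p$ make these transitions $C^0$-compatible, so the Lipschitz constant $C_0$ can be chosen uniformly in $\varepsilon$; taking $R$ below the resulting threshold then guarantees that every admissible geodesic $\gamma$ experiences $\sigma_\varepsilon|_\gamma \sim m$, and the lemma follows.
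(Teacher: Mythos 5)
Your proposal is correct and follows essentially the same route as the paper: the fundamental theorem of calculus along the unique geodesic joining $x\sim y$, combined with uniform comparability of $\sigma_\varepsilon$ on the ball $B_\varepsilon(x,R\sigma_\varepsilon(x))$ after shrinking $R$, yielding the factor $R^{1-\alpha}$. The only cosmetic difference is in how that comparability is obtained near the singular points: you prove a genuine Lipschitz bound for $\sigma_\varepsilon$ in the bubble region via the ALE structure, whereas the paper simply uses the crude bound $\sigma_\varepsilon\ge\varepsilon\ge\sigma_\varepsilon(x)/2N'$ there; both suffice.
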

\begin{proof}
First of all, we show that there are constants $N,R,\varepsilon_0>0$ independent of $\varepsilon$ such that $\sigma_\varepsilon|_{B_\varepsilon(x,R\sigma_\varepsilon(x))}\ge \sigma_\varepsilon(x)/N$ for all $x$ and $\varepsilon\in(0,\varepsilon_0]$, where $B_\varepsilon(x,t)$ is the open geodesic ball with respect to $g_\varepsilon$. Fix a sufficiently small $t>0$ such that $U(p,t)\cap U(q,t)=\emptyset$ for $p\neq q$. By the definition of ALE spaces, there is $N'\ge 1$ such that $\frac{1}{2}\varpi_p^*\omega_0\le \eta_p\le 2\varpi_p^*\omega_0$ on $\varpi_p^{-1}(\C^2\setminus B_{\Gamma_p}(N'))$, for all $p\in S$. Combining with \eqref{eq decay gluing1}, there is $\varepsilon_0>0$ such that 
$\frac{1}{2}\pi^*\omega_0\le \omega_\varepsilon\le 2\pi^*\omega_0$ on $X_1:=X\setminus \bigcup_{p\in S}\overline{U(p,N'\varepsilon)}$ for all $\varepsilon\in(0,\varepsilon_0]$. Now, we identify $X\setminus \pi^{-1}(S)$ with $\T\setminus S$ by $\pi$. Take $x\in X_2:=X\setminus \bigcup_{p\in S}\overline{U(p,2N'\varepsilon)}$ and $y\in B_\varepsilon(x,\sigma_\varepsilon(x)/3)$. Suppose $c\colon [0,1]\to X$ is the geodesic joining $x$ and $y$. Here, we show ${\rm Im}(c)\subset X_1$. Suppose not, then there are $s_0$ and $p\in S$ such that $d_\T(c(s_0),p)=N'\varepsilon$ and 
\begin{align*}
\frac{\sigma_\varepsilon(x)}{3}>d_{g_\varepsilon}(x,y)\ge d_{g_\varepsilon}(x,c(s_0))
&\ge \frac{1}{\sqrt{2}}d_\T(x,c(s_0))\\
&\ge \frac{1}{\sqrt{2}}(d_\T(p,x)-N'\varepsilon).
\end{align*}
Since $\sigma_\varepsilon(x)\le d_\T(x,p)$, we have $d_\T(p,x)<3N'\varepsilon/(3-\sqrt{2})$, which contradicts $d_\T(p,x)\ge 2N'\varepsilon$. Therefore, we obtain ${\rm Im}(c)\subset X_1$. Then we can show 
\begin{align*}
\frac{\sigma_\varepsilon(x)}{3}>d_{g_\varepsilon}(x,y)\ge \frac{1}{\sqrt{2}}d_\T(x,y)
&\ge \frac{1}{\sqrt{2}}|\sigma_\varepsilon(x)-\sigma_\varepsilon(y)|,
\end{align*}
which implies $\sigma_\varepsilon(y)>(1-\sqrt{2}/3)\sigma_\varepsilon(x)$. Thus we obtain $\sigma_\varepsilon|_{B_\varepsilon(x,\sigma_\varepsilon(x)/3)}\ge (1-\sqrt{2}/3)\sigma_\varepsilon(x)$, if $\varepsilon\le \varepsilon_0$ and $x\in X_2$. If $x\in \overline{U(p,2N'\varepsilon)}$, then $\sigma_\varepsilon|_{B_\varepsilon(x,\sigma_\varepsilon(x)/3)}\ge \varepsilon\ge \sigma_\varepsilon(x)/{2N'}$. Therefore, if we take $N=\max\{ 2N',(1-\sqrt{2}/3)^{-1}\}$ and $R\le 1/3$, we have $\sigma_\varepsilon|_{B_\varepsilon(x,R\sigma_\varepsilon(x))}\ge \sigma_\varepsilon(x)/N$ for all $x$ and $\varepsilon\in(0,\varepsilon_0]$.

Let $c\colon [0,d_{g_\varepsilon}(x,y)]\to X$ be the geodesic with unit speed and $c(0)=x$, $c(d_{g_\varepsilon}(x,y))=y$. Then we have  
\begin{align*}
\left| \nabla_{g_\varepsilon}^ku(x)-\nabla_{g_\varepsilon}^ku(y)\right|
&\le \int_0^{d_{g_\varepsilon}(x,y)}|\nabla_{g_\varepsilon}^{k+1}u(c(t))|dt,
\end{align*}
and 
\begin{align*}
&\quad\,\,
\frac{\min\left\{ \sigma_\varepsilon(x),\sigma_\varepsilon(y)\right\}^{-\delta+k+1}}{N^{-\delta+k+1}}\left| \nabla_{g_\varepsilon}^ku(x)-\nabla_{g_\varepsilon}^ku(y)\right|\\
&\le \int_0^{d_{g_\varepsilon}(x,y)}\sigma_\varepsilon(c(t))^{-\delta+k+1}|\nabla_{g_\varepsilon}^{k+1}u(c(t))|dt\\
&\le d_{g_\varepsilon}(x,y)\sup\left|\sigma_\varepsilon^{-\delta+k+1}\nabla_{g_\varepsilon}^{k+1}u\right|.
\end{align*}
Therefore, if $x\sim y$, we obtain 
\begin{align*}
&\quad\,\,\min\left\{ \sigma_\varepsilon(x),\sigma_\varepsilon(y)\right\}^{-\delta+k+\alpha}
\frac{\left| \nabla_{g_\varepsilon}^ku(x)-\nabla_{g_\varepsilon}^ku(y)\right|}{d_{g_\varepsilon}(x,y)^\alpha}\\
&\le N^{-\delta+k+1}\min\left\{ \sigma_\varepsilon(x),\sigma_\varepsilon(y)\right\}^{\alpha-1}d_{g_\varepsilon}(x,y)^{1-\alpha}\sup\left|\sigma_\varepsilon^{-\delta+k+1}\nabla_{g_\varepsilon}^{k+1}u\right|\\
&\le N^{-\delta+k+1}R^{1-\alpha}\sup\left|\sigma_\varepsilon^{-\delta+k+1}\nabla_{g_\varepsilon}^{k+1}u\right|.
\end{align*}
It gives $\| u\|_{C^{k,\alpha}_{\delta,g_\varepsilon}}\le N^{-\delta+k+1}R^{1-\alpha}\| u\|_{C^{k+1}_{\delta,g_\varepsilon}}$.
\end{proof}

Define an operator $L_{\omega_\varepsilon}\colon C^{k+2,\alpha}_{\delta,g_\varepsilon}(X)\to C^{k,\alpha}_{\delta-2,g_\varepsilon}(X)$ by 
\begin{align*}
L_{\omega_\varepsilon}(\psi)
:=\frac{2\sqrt{-1}\ddb\psi\wedge \omega_\varepsilon}{\omega_\varepsilon^2}.
\end{align*}
Put 
\begin{align*}
E^{k,\alpha}_{\delta,g_\varepsilon}
:=\left\{ \psi\in C^{k,\alpha}_{\delta,g_\varepsilon}(X)\left|\, 
\int_X\psi \omega_\varepsilon^2=0
\right.\right\},
\end{align*}
then it is a closed subspace of $C^{k,\alpha}_{\delta,g_\varepsilon}(X)$. Moreover, we obtain a bounded operator 
$L_{\omega_\varepsilon}\colon E^{k+2,\alpha}_{\delta,g_\varepsilon}\to E^{k,\alpha}_{\delta-2,g_\varepsilon}$. Then we have the next theorem. 
\begin{thm}[{\cite[Theorem 3.6.8]{jiang2025kummer}}]
\label{thm schauder}
Let $\delta\in (-2,0)$. 
Then $L_{\omega_\varepsilon}\colon E^{k+2,\alpha}_{\delta,g_\varepsilon}\to E^{k,\alpha}_{\delta-2,g_\varepsilon}$ is an invertible bounded operator and 
there is a constant $K_{k,\delta}>0$ independent of $\varepsilon$ such that for all $u\in E^{k+2,\alpha}_{\delta,g_\varepsilon}$ and sufficiently small $\varepsilon>0$, 
\begin{align*}
\| u\|_{C^{k+2,\alpha}_{\delta,g_\varepsilon}}\le K_{k,\delta}\| L_{\omega_\varepsilon}(u)\|_{C^{k,\alpha}_{\delta-2,g_\varepsilon}}
\end{align*}
\end{thm}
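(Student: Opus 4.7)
The plan is to prove the uniform $C^{k+2,\alpha}_{\delta,g_\varepsilon}$ estimate by a contradiction-and-blow-up argument, then deduce invertibility by Fredholm theory. Suppose the estimate fails. Then there exist sequences $\varepsilon_i\to 0$ and $u_i\in E^{k+2,\alpha}_{\delta,g_{\varepsilon_i}}$ with $\|u_i\|_{C^{k+2,\alpha}_{\delta,g_{\varepsilon_i}}}=1$ and $\|L_{\omega_{\varepsilon_i}}(u_i)\|_{C^{k,\alpha}_{\delta-2,g_{\varepsilon_i}}}\to 0$. Applying standard interior Schauder estimates on geodesic balls of $g_{\varepsilon_i}$-radius comparable to $\sigma_{\varepsilon_i}(x)$, one reduces to controlling the weighted $C^0$ part of the norm; after choosing $x_i\in X$ at which $\sigma_{\varepsilon_i}(x_i)^{-\delta}|u_i(x_i)|$ is comparable to $1$, the analysis splits according to the behaviour of $\lambda_i:=\sigma_{\varepsilon_i}(x_i)$.

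Three regimes arise, each isolating one of the model geometries of the degeneration. If $\lambda_i$ stays bounded below, no rescaling is required and $u_i$ converges on compact subsets of $\T\setminus S$ to a function $\tilde u_\infty$ harmonic with respect to $g_0$. If $\lambda_i\to 0$ with $\lambda_i/\varepsilon_i\to\infty$, then $x_i$ lies in the neck region near some $p\in S$; using the local identification $B_\T(p,t)\cong B_{\Gamma_p}(t)$ together with \eqref{eq decay gluing1}, the rescaled metrics $\lambda_i^{-2}g_{\varepsilon_i}$ converge on bounded balls to the flat metric on $\C^2/\Gamma_p\setminus\{0\}$, and $\tilde u_i:=\lambda_i^{-\delta}u_i$ converges to a function $\tilde u_\infty$ harmonic on that cone. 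If $\lambda_i/\varepsilon_i$ remains bounded, one transports everything to $M_p$ via $\mathcal{I}_{p,\varepsilon_i}$; by the very construction of $\omega_\varepsilon$ near $p$, the rescaled metric $\varepsilon_i^{-2}g_{\varepsilon_i}$ converges on compact subsets of $M_p$ to the K\"ahler metric of $\eta_p$, and $\tilde u_i:=\varepsilon_i^{-\delta}\mathcal{I}_{p,\varepsilon_i}^*u_i$ converges to $\tilde u_\infty$ harmonic on $(M_p,\eta_p)$. In every case $|\tilde u_\infty|$ attains the value $1$ at an accumulation point of the rescaled $x_i$.

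The choice $\delta\in(-2,0)$ is exactly what forces $\tilde u_\infty\equiv 0$. In the neck and bubble cases $\tilde u_\infty$ is harmonic on a complete non-compact $4$-manifold satisfying $|\tilde u_\infty(y)|\lesssim (1+r(y))^\delta$; since $\delta<0$ the maximum principle, combined with the removable-singularity statement at the origin of $\C^2/\Gamma_p$ which is available because $\delta>-2$ in real dimension $4$, forces $\tilde u_\infty\equiv 0$. In the first regime the same removable-singularity argument at each $p\in S$ extends $\tilde u_\infty$ to a harmonic function on the compact orbifold $\T$, hence to a constant; the mean-zero condition $\int_X u_i\,\omega_{\varepsilon_i}^2=0$ passes to the limit and kills that constant.

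Once the a priori estimate is established, injectivity of $L_{\omega_\varepsilon}$ is immediate. For surjectivity, one observes that for each fixed $\varepsilon>0$ the weighted H\"older spaces coincide up to equivalence of norms with the usual ones, and $L_{\omega_\varepsilon}$ is a nonzero scalar multiple of the Laplace--Beltrami operator of $g_\varepsilon$ on the smooth compact surface $X$; by classical Hodge theory it restricts to an isomorphism between mean-zero spaces, which yields Fredholm index $0$ for $L_{\omega_\varepsilon}\colon E^{k+2,\alpha}_{\delta,g_\varepsilon}\to E^{k,\alpha}_{\delta-2,g_\varepsilon}$, so injectivity upgrades to invertibility. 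The main obstacle is the neck regime: one must verify that the gluing correction in $\omega_\varepsilon$ rescales to a vanishing perturbation on bounded balls and that $L_{\omega_{\varepsilon_i}}$ converges strongly enough to the flat Laplacian on $\C^2/\Gamma_p$ to transfer harmonicity to the limit, which is precisely the role of the decay estimate \eqref{eq decay gluing1}.
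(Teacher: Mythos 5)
Your proposal is correct and follows essentially the same route as the paper: a local weighted Schauder estimate reducing everything to the weighted $C^0$ norm, followed by a contradiction/blow-up argument with the same three regimes (interior of $\T\setminus S$, the neck rescaling to the flat cone $(\C^2\setminus\{0\})/\Gamma_p$, and the bubble $(M_p,\eta_p)$), with $\delta>-2$ used for removability at the cone point and $\delta<0$ for the Liouville/maximum-principle step, and the mean-zero condition killing the constant in the torus regime. Your explicit Fredholm-index justification of surjectivity is a reasonable filling-in of a step the paper leaves implicit, but it is not a different method.
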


\subsection{Construction of solutions}
Let $\omega_\varepsilon$, $\Omega$ be as in Subsection \ref{subsec appro}. 
Define $f_\varepsilon\in C^\infty(X)$ by 
\begin{align*}
\omega_\varepsilon^2=\frac{e^{f_\varepsilon}}{2}\Omega\wedge\bar{\Omega}
\end{align*}
and put 
\begin{align*}
W_\varepsilon:=\frac{\int_Xe^{f_\varepsilon}\Omega\wedge\bar{\Omega}}{\int_X\Omega\wedge\bar{\Omega}}.
\end{align*}
By the construction of $\omega_\varepsilon$, the support of $f_\varepsilon$ is contained in $\bigcup_{p\in S}A(p,\varepsilon^{1/2})$, and by the estimate \eqref{ineq omega e - vol}, we have $|W_\varepsilon-1|\lesssim \varepsilon^4$. 

Now, we have 
\begin{align*}
(\omega_\varepsilon+\sqrt{-1}\ddb\psi)^2
=(1+L_{\omega_\varepsilon}\psi)\omega_\varepsilon^2
-(\ddb\psi)^2.
\end{align*}
In this subsection, we construct a function $\varphi_\varepsilon\in C^\infty(X)$ satisfying the equation  
\begin{align}
(\omega_\varepsilon+\sqrt{-1}\ddb\varphi_\varepsilon)^2
=W_\varepsilon e^{-f_\varepsilon}\omega_\varepsilon^2.\label{eq MA}
\end{align}
In the following argument, we apply Theorem \ref{thm schauder}. 
\begin{prop}\label{prop alm ricci flat}
We have 
\begin{align*}
\|W_\varepsilon e^{-f_\varepsilon}-1\|_{C^{k,\alpha}_{\delta-2,g_\varepsilon}}
&\lesssim \varepsilon^{3-\delta/2}.
\end{align*}
\end{prop}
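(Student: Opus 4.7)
The plan is to decompose
\[
W_\varepsilon e^{-f_\varepsilon} - 1 \;=\; W_\varepsilon\bigl(e^{-f_\varepsilon} - 1\bigr) \;+\; (W_\varepsilon - 1)
\]
and estimate each piece separately, exploiting that $f_\varepsilon$ is supported on $\bigcup_{p\in S} A(p,\varepsilon^{1/2})$ while $W_\varepsilon - 1$ is a constant. Outside the gluing annuli the first term vanishes, and the contribution of the constant $W_\varepsilon - 1$ to the weighted norm is bounded by $|W_\varepsilon - 1|\sup_x \sigma_\varepsilon^{2-\delta} \lesssim \varepsilon^4$, using that $\sigma_\varepsilon \le 1$ and $2-\delta>0$. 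Since $\delta > -2$ gives $4 \ge 3 - \delta/2$, this contribution is absorbed into the target bound $\varepsilon^{3-\delta/2}$.

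The main work is on $A(p,\varepsilon^{1/2})$, where by the definition of $\sigma_\varepsilon$ one has $\sigma_\varepsilon \asymp \varepsilon^{1/2}$. Here I would first upgrade \eqref{eq decay gluing1} to a pointwise estimate on $f_\varepsilon$ itself. Using $(\pi^*\omega_0)^2 = \tfrac12 \Omega\wedge\bar\Omega$ on $X\setminus \pi^{-1}(S)$ and the defining relation $\omega_\varepsilon^2 = (e^{f_\varepsilon}/2)\,\Omega\wedge\bar\Omega$, one has
\[
\tfrac12(e^{f_\varepsilon}-1)\,\Omega\wedge\bar{\Omega} \;=\; (\omega_\varepsilon - \pi^*\omega_0)\wedge(\omega_\varepsilon + \pi^*\omega_0).
\]
On $A(p,\varepsilon^{1/2})$ the form $\Omega\wedge\bar\Omega$ is bounded above and below in the flat norm, so dividing, differentiating, and applying \eqref{eq decay gluing1} yields $|\nabla_0^j f_\varepsilon|_{g_0} \lesssim \varepsilon^{2-j/2}$ for every $j\ge 0$. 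Because \eqref{eq decay gluing1} also forces $g_\varepsilon - g_0$ and its $g_0$-derivatives to be small on this region, the same bound transfers to $\nabla_{g_\varepsilon}$ and $|\cdot|_{g_\varepsilon}$ (modulo universal constants), and composing with a power series in $f_\varepsilon$ preserves it, giving $|\nabla_{g_\varepsilon}^j(W_\varepsilon(e^{-f_\varepsilon}-1))|_{g_\varepsilon} \lesssim \varepsilon^{2-j/2}$.

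Plugging into the weighted norm on $A(p,\varepsilon^{1/2})$, the key observation is that the powers of $\varepsilon$ from $\sigma_\varepsilon$ and from the derivative bound exactly cancel the $j$-dependence:
\[
\sigma_\varepsilon^{(2-\delta)+j}\,\bigl|\nabla_{g_\varepsilon}^j u\bigr|_{g_\varepsilon}\;\lesssim\; \varepsilon^{(2-\delta+j)/2}\cdot \varepsilon^{2-j/2}\;=\;\varepsilon^{3-\delta/2}
\]
uniformly in $j$, controlling the $C^{k}_{\delta-2,g_\varepsilon}$ part. For the Hölder seminorm I would invoke Lemma \ref{lem k,a < k+1} to reduce matters to a $C^{k+1}_{\delta-2,g_\varepsilon}$-bound computed in exactly the same way. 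The main obstacle is really just the bookkeeping: noting the fortunate $j$-independence in the cancellation above, and converting carefully between $g_0$- and $g_\varepsilon$-derivatives on the annular region where the two metrics are comparable at all orders. Once this is pinpointed, the claimed estimate follows directly from \eqref{eq decay gluing1} and the already-established $|W_\varepsilon - 1|\lesssim \varepsilon^4$.
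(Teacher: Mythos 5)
Your proposal is correct and follows essentially the same route as the paper: decompose off the constant $W_\varepsilon-1$ (bounded by $\varepsilon^4\lesssim\varepsilon^{3-\delta/2}$), use \eqref{eq decay gluing1} to get $|\nabla_{g_\varepsilon}^j(e^{-f_\varepsilon}-1)|_{g_\varepsilon}\lesssim\varepsilon^{2-j/2}$ on $A(p,\varepsilon^{1/2})$ where $\sigma_\varepsilon\asymp\varepsilon^{1/2}$, observe the $j$-independent cancellation giving $\varepsilon^{3-\delta/2}$, and finish the H\"older seminorm with Lemma \ref{lem k,a < k+1}. The only difference is that you spell out the derivation of the pointwise bound on $f_\varepsilon$ from the wedge identity, which the paper asserts directly.
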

\begin{proof}
We may write 
\begin{align*}
w:=W_\varepsilon e^{-f_\varepsilon}-1
=e^{-f_\varepsilon}-1+(W_\varepsilon-1)e^{-f_\varepsilon}.
\end{align*}
By the estimate \eqref{eq decay gluing1}, for $k\ge 0$, 
we have 
\begin{align*}
|\nabla_0^k(e^{-f_\varepsilon}-1)|_{g_0}
&\lesssim\varepsilon^{2-k/2}\mbox{ on }A(p,\varepsilon^{1/2}),
\end{align*}
which gives 
\begin{align*}
|\nabla_{g_\varepsilon}^k(e^{-f_\varepsilon}-1)|_{g_\varepsilon}
&\lesssim\varepsilon^{2-k/2}\mbox{ on }A(p,\varepsilon^{1/2}).
\end{align*}
Since $|W_\varepsilon-1|\lesssim\varepsilon^4$, we obtain 
\begin{align*}
|\nabla_{g_\varepsilon}^kw|_{g_\varepsilon}
&\lesssim\varepsilon^{2-k/2}\mbox{ on }A(p,\varepsilon^{1/2}),\\
|\nabla_{g_\varepsilon}^kw|_{g_\varepsilon}
&\equiv 0\mbox{ on }X\setminus\bigcup_{p\in S}A(p,\varepsilon^{1/2})\quad (k\ge 1).
\end{align*}
Since $\sigma_\varepsilon< 2\varepsilon^{1/2}$ on $A(p,\varepsilon^{1/2})$, we obtain 
\begin{align*}
\sum_{j=0}^k\sup\sigma_\varepsilon^{-(\delta-2)+j}|\nabla_{g_\varepsilon}^jw|_{g_\varepsilon}
&\lesssim\sum_{j=0}^k\varepsilon^{2-j/2+(-(\delta-2)+j)/2}\lesssim\varepsilon^{3-\delta/2}
\end{align*}
for all $k$. By Lemma \ref{lem k,a < k+1}, we have $\| w\|_{C^{k,\alpha}_{\delta-2,g_\varepsilon}}\lesssim\varepsilon^{3-\delta/2}$. 
\end{proof}

\begin{thm}
There is $\varphi_\varepsilon\in C^\infty(X)$ satisfying \eqref{eq MA} and $\|\varphi_\varepsilon\|_{C^{k+2,\alpha}_{\delta,g_\varepsilon}}\lesssim\varepsilon^{3-\delta/2}$ for every $k\ge 0$ and $-2<\delta<0$. 
\label{thm fix pt}
\end{thm}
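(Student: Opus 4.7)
The plan is to recast equation \eqref{eq MA} as a fixed-point problem for the inverse of $L_{\omega_\varepsilon}$ and apply the Banach contraction principle in the weighted Hölder space $E^{k+2,\alpha}_{\delta,g_\varepsilon}$. Using the identity $(\omega_\varepsilon+\sqrt{-1}\ddb\psi)^2 = (1+L_{\omega_\varepsilon}\psi)\omega_\varepsilon^2 - (\ddb\psi)^2$ recalled earlier in the paper, equation \eqref{eq MA} is equivalent to
\begin{equation*}
L_{\omega_\varepsilon}\varphi \;=\; h_\varepsilon + Q(\varphi), \qquad h_\varepsilon := W_\varepsilon e^{-f_\varepsilon}-1, \qquad Q(\varphi) := \frac{(\ddb\varphi)^2}{\omega_\varepsilon^2}.
\end{equation*}
The definition of $W_\varepsilon$ gives $\int_X h_\varepsilon\,\omega_\varepsilon^2 = 0$, and Stokes' theorem gives $\int_X Q(\varphi)\,\omega_\varepsilon^2 = \int_X (\ddb\varphi)^2 = 0$, so $h_\varepsilon + Q(\varphi)\in E^{k,\alpha}_{\delta-2,g_\varepsilon}$. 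Theorem \ref{thm schauder} then makes the nonlinear map $T(\varphi) := L_{\omega_\varepsilon}^{-1}(h_\varepsilon+Q(\varphi))$ well-defined on $E^{k+2,\alpha}_{\delta,g_\varepsilon}$, and its fixed points are solutions of \eqref{eq MA}.

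The main technical input beyond Theorem \ref{thm schauder} and the smallness of $h_\varepsilon$ furnished by Proposition \ref{prop alm ricci flat} is a quadratic estimate on $Q$. The polarization identity $Q(\varphi_1)-Q(\varphi_2) = \ddb(\varphi_1-\varphi_2)\wedge\ddb(\varphi_1+\varphi_2)/\omega_\varepsilon^2$ together with the product rule for weighted Hölder norms naturally controls the difference in weight $(\delta-2)+(\delta-2)=2\delta-4$. Since $\sigma_\varepsilon\ge\varepsilon$ and $\delta<2$, any $C^{k,\alpha}$-function of weight $2\delta-4$ embeds into weight $\delta-2$ with loss $\sup_X\sigma_\varepsilon^{\delta-2}\lesssim\varepsilon^{\delta-2}$, so I expect
\begin{equation*}
\|Q(\varphi_1)-Q(\varphi_2)\|_{C^{k,\alpha}_{\delta-2,g_\varepsilon}}\lesssim \varepsilon^{\delta-2}\bigl(\|\varphi_1\|_{C^{k+2,\alpha}_{\delta,g_\varepsilon}}+\|\varphi_2\|_{C^{k+2,\alpha}_{\delta,g_\varepsilon}}\bigr)\|\varphi_1-\varphi_2\|_{C^{k+2,\alpha}_{\delta,g_\varepsilon}}.
\end{equation*}

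Set $r := 2K_{k,\delta}\|h_\varepsilon\|_{C^{k,\alpha}_{\delta-2,g_\varepsilon}}$, so $r\lesssim \varepsilon^{3-\delta/2}$ by Proposition \ref{prop alm ricci flat}. Combining the Schauder estimate of Theorem \ref{thm schauder} with the quadratic estimate, on the closed ball $B_r\subset E^{k+2,\alpha}_{\delta,g_\varepsilon}$ one has $\|T(\varphi)\|\le r/2 + O(\varepsilon^{\delta-2}r^2)$ and $\|T(\varphi_1)-T(\varphi_2)\|\le O(\varepsilon^{\delta-2}r)\|\varphi_1-\varphi_2\|$ in the $C^{k+2,\alpha}_{\delta,g_\varepsilon}$-norm. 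Both of the error factors equal $O(\varepsilon^{1+\delta/2})$, which tends to $0$ precisely because $\delta\in(-2,0)$ forces $1+\delta/2>0$; hence for all sufficiently small $\varepsilon$, $T$ contracts $B_r$ into itself, and the Banach fixed point theorem produces a unique $\varphi_\varepsilon\in B_r$ solving \eqref{eq MA} with the claimed weighted estimate. The smoothness $\varphi_\varepsilon\in C^\infty(X)$ follows from elliptic regularity for the complex Monge-Ampère operator applied to $\varphi_\varepsilon$ obtained at $k=0$, and the weighted $C^{k+2,\alpha}_{\delta,g_\varepsilon}$-bound for every $k$ follows by rerunning the fixed point in $E^{k+2,\alpha}_{\delta,g_\varepsilon}$ and identifying the solutions via uniqueness in $B_r$. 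I expect the main technical obstacle to be carrying out the quadratic estimate on $Q$ in the weighted $C^{k,\alpha}$-norm uniformly in $\varepsilon$, since the Hölder difference quotient of $(\ddb\varphi)^2/\omega_\varepsilon^2$ must be controlled scale-by-scale as the metric $g_\varepsilon$ degenerates near the orbifold locus; once the correct $\varepsilon^{\delta-2}$ loss is pinned down, the hypothesis $\delta>-2$ automatically drives the contraction.
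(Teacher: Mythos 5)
Your proposal is correct and follows essentially the same route as the paper: the paper's proof is exactly the Picard iteration $\psi_{n+1}=L_{\omega_\varepsilon}^{-1}\bigl(W_\varepsilon e^{-f_\varepsilon}-1+(\ddb\psi_n)^2/\omega_\varepsilon^2\bigr)$ started at $\psi_0=0$, kept in a ball of radius $s_k\lesssim\varepsilon^{3-\delta/2}$ by the same product estimate with loss $\varepsilon^{\delta-2}$ (quoted from \cite[Proposition 3.4.5]{jiang2025kummer}) and contracting with factor $O(\varepsilon^{1+\delta/2})$, which matches your arithmetic exactly. The only cosmetic differences are that the paper obtains smoothness by observing the iteration is independent of $k,\delta$ rather than by elliptic regularity plus uniqueness, and you make explicit the (correct) mean-zero check $\int_X(\ddb\varphi)^2=0$ that the paper leaves implicit.
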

\begin{proof}
The standard argument of the fixed-point theorem gives the proof. We construct a sequence $(\psi_n)_{n=0}^\infty$ inductively by 
\begin{align*}
L_{\omega_\varepsilon}\psi_{n+1}
&=W_\varepsilon e^{-f_\varepsilon}-1
+\frac{(\ddb\psi_n)^2}{\omega_\varepsilon^2},
\end{align*}
since $L_{\omega_\varepsilon}$ is invertible. We will show that $(\psi_n)_n$ is a Cauchy sequence and the limit is the solution of \eqref{eq MA}. 

Let $-2<\delta<0$ and $S,T$ be a tensor on $X$. 
Then by \cite[Proposition 3.4.5]{jiang2025kummer}, we have 
\begin{align*}
\| S\otimes T\|_{C^{0,\alpha}_{\delta-2,g_\varepsilon}}
&\le \varepsilon^{\delta-2}
\| S\|_{C^{0,\alpha}_{\delta-2,g_\varepsilon}}\| T\|_{C^{0,\alpha}_{\delta-2,g_\varepsilon}}.
\end{align*}

By a similar argument, we can see 
\begin{align*}
\| S\otimes T\|_{C^{k,\alpha}_{\delta-2,g_\varepsilon}}
&\le\varepsilon^{\delta-2}
\sum_{l=0}^k\frac{k!}{l!(k-l)!}\| S\|_{C^{l,\alpha}_{\delta-2,g_\varepsilon}}\| T\|_{C^{k-l,\alpha}_{\delta-2,g_\varepsilon}}.
\end{align*}
Therefore, there is a constant $C_k>0$ such that 
\begin{align*}
&\quad\quad
\left\|\frac{(\ddb\psi_n)^2-(\ddb\psi_{n-1})^2}{\omega_\varepsilon^2}\right\|_{C^{k,\alpha}_{\delta-2,g_\varepsilon}}\\
&\le C_k\varepsilon^{\delta-2}\sum_{l=0}^k
\left( \|\ddb\psi_n\|_{C^{l,\alpha}_{\delta-2,g_\varepsilon}}
+ \| \ddb\psi_{n-1}\|_{C^{l,\alpha}_{\delta-2,g_\varepsilon}}
\right)
\|\ddb\psi_n- \ddb\psi_{n-1}\|_{C^{k-l,\alpha}_{\delta-2,g_\varepsilon}}.
\end{align*}
Then, by Theorem \ref{thm schauder}, we have 
\begin{align*}
\| \psi_{n+1}-\psi_n\|_{C_{\delta,g_\varepsilon}^{k+2,\alpha}}
&\le K_k
\left\|\frac{(\ddb\psi_n)^2-(\ddb\psi_{n-1})^2}{\omega_\varepsilon^2}\right\|_{C^{k,\alpha}_{\delta-2,g_\varepsilon}}\\
&\le K_kC_k\varepsilon^{\delta-2}
\sum_{l=0}^{k}\left(\| \psi_n\|_{C_{\delta,g_\varepsilon}^{l+2,\alpha}}
+\| \psi_{n-1}\|_{C_{\delta,g_\varepsilon}^{l+2,\alpha}}
\right)\| \psi_n-\psi_{n-1}\|_{C_{\delta,g_\varepsilon}^{k-l+2,\alpha}}.
\end{align*}
We also have 
\begin{align*}
\| \psi_{n+1}\|_{C_{\delta,g_\varepsilon}^{k+2,\alpha}}
&\le K_k\| W_\varepsilon e^{-f_\varepsilon}-1\|_{C_{\delta-2,g_\varepsilon}^{k,\alpha}}\\
&\quad\quad
+K_kC_k\varepsilon^{\delta-2}
\sum_{l=0}^k\| \psi_n\|_{C_{\delta,g_\varepsilon}^{l+2,\alpha}}\| \psi_n\|_{C_{\delta,g_\varepsilon}^{k-l+2,\alpha}}.
\end{align*}

From now on, we put $\psi_0=0$ and 
\begin{align*}
s_k&:= 2\max\left\{ \left.
K_l\|We^{-f}-1\|_{C^{l,\alpha}_{\delta-2,g_\varepsilon}}
\,\right|\, l=0,\ldots,k\right\}.
\end{align*}
By Proposition \ref{prop alm ricci flat}, we have $s_k\le C_k'\varepsilon^{3-\delta/2}$ for some constant $C_k'>0$. Put 
\begin{align*}
\varepsilon_{k,\delta}
:=\left(\frac{1}{4(k+1)C_kK_kC_k'}\right)^{2/(2+\delta)},
\end{align*}
then we have $K_kC_k\varepsilon^{\delta-2}(k+1)s_k\le \frac{1}{4}$ for all $\varepsilon\le \varepsilon_{k,\delta}$. 
If $\| \psi_n\|_{C_{\delta,g_\varepsilon}^{l+2,\alpha}}\le s_l$ for all $l\le k$ and $\varepsilon\le \min\{ \varepsilon_{0,\delta},\ldots,\varepsilon_{k,\delta}\}$, 
we have  
\begin{align*}
\| \psi_{n+1}\|_{C_{\delta,g_\varepsilon}^{k+2,\alpha}}
&\le K_k\| W_\varepsilon e^{-f_\varepsilon}-1\|_{C_{\delta-2,g_\varepsilon}^{k,\alpha}}
+K_kC_k\varepsilon^{\delta-2}
\sum_{l=0}^ks_ls_{k-l}\\
&\le \frac{s_k}{2}
+K_kC_k\varepsilon^{\delta-2}
(k+1)s_k^2
\le s_k.
\end{align*}
Therefore, by the induction for $k,n$, we have 
$\| \psi_n\|_{C_{\delta,g_\varepsilon}^{k+2,\alpha}}\le s_k$ if $\varepsilon\le \min\{ \varepsilon_{0,\delta},\ldots,\varepsilon_{k,\delta}\}$, for all $n$ and $k$. 
Therefore, we have 
\begin{align*}
\| \psi_{n+1}-\psi_n\|_{C_{\delta,g_\varepsilon}^{k+2,\alpha}}
&\le K_kC_k\varepsilon^{\delta-2}
\sum_{l=0}^{k}\left(\| \psi_n\|_{C_{\delta,g_\varepsilon}^{l+2,\alpha}}
+\| \psi_{n-1}\|_{C_{\delta,g_\varepsilon}^{l+2,\alpha}}
\right)\| \psi_n-\psi_{n-1}\|_{C_{\delta,g_\varepsilon}^{k-l+2,\alpha}}\\
&\le K_kC_k\varepsilon^{\delta-2}
\sum_{l=0}^{k}2s_l\| \psi_n-\psi_{n-1}\|_{C_{\delta,g_\varepsilon}^{k-l+2,\alpha}}.
\end{align*}
If we put $\Psi_{n,k}:=\max\{ \| \psi_n-\psi_{n-1}\|_{C_{\delta,g_\varepsilon}^{l+2,\alpha}}|\, l=0,\ldots,k\}$, we have 
\begin{align*}
\Psi_{n+1,k}
&\le 2(k+1)s_kK_kC_k\varepsilon^{\delta-2}
\Psi_{n,k}.
\end{align*}
Again by $K_kC_k\varepsilon^{\delta-2}(k+1)s_k\le \frac{1}{4}$, we obtain $\Psi_{n+1,k}
\le \frac{1}{2}\Psi_{n,k}$, which implies that $(\psi_n)_n$ is a Cauchy sequence in $C^{k+2,\alpha}_{\delta,g_\varepsilon}(X)$. 
Then $(\psi_n)_n$ converges to $\varphi_\varepsilon$ in $C^{k+2,\alpha}_{\delta,g_\varepsilon}$, which solves \eqref{eq MA}. Since $(\psi_n)_n$ is independent of $k,\delta$, its limit $\varphi_\varepsilon$ is also independent of $k,\delta$. Consequently, it is $C^\infty$-class. Moreover, the estimate $\| \psi_n\|_{C_{\delta,g_\varepsilon}^{k+2,\alpha}}\le s_k$ implies $\| \varphi_\varepsilon\|_{C_{\delta,g_\varepsilon}^{k+2,\alpha}}\le s_k\lesssim \varepsilon^{3-\delta/2}$. 
\end{proof}

\subsection{Proof of Theoreom \ref{thm schauder}}
The argument in the previous subsection is based on Theorem \ref{thm schauder}, which was proved in \cite{jiang2025kummer}. However, some of the assumptions and settings adopted here differ slightly from those in \cite{jiang2025kummer}. Although the proof is essentially the same, we provide an outline in this subsection for the reader\textquotesingle s convenience.

In this subsection, let $X=X(\Lambda,\Gamma)$ be as in Subsection \ref{subsec kummer}. Following the notations in Subsection \ref{subsec appro}, we put 
\begin{align*}
X_t&:=X\setminus \bigcup_{p\in S} \overline{U(p,t)},\\
Y_{p,s,t}&:=U(p,s)\setminus \overline{U(p,t)}\quad (s>t),\\
Z_t&:=\bigcup_{p\in S} U(p,t).
\end{align*}
Here, we take $s,t>0$ to be smaller than 
\begin{align*}
t_0:=\frac{1}{4}\inf\left\{ d_\T(p,q)|\, p,q\in S,\, p\neq q\right\}>0.
\end{align*}
The K\"ahler form $\omega_\varepsilon$ defined in Subsection \ref{subsec appro} satisfies the following estimates; 
\begin{align*}
\omega_\varepsilon&=\pi^*\omega_0\mbox{ on }X_{2\varepsilon^{1/2}},\\
\varepsilon^{-2}(\mathcal{I}_{p,\varepsilon}^{-1})^*\omega_\varepsilon
&=\eta_p\mbox{ on }\mathcal{I}_{p,\varepsilon}^{-1}(U(p,\varepsilon^{1/2})),\\
|\nabla_0^k(\varepsilon^{-2}(\mathcal{I}_{p,\varepsilon}^{-1})^*\omega_\varepsilon-\eta_p)|_{g_p}
&=O(r^{-4-k})\mbox{ on } \mathcal{I}_{p,\varepsilon}^{-1}(Y_{p,t_0,\varepsilon^{1/2}}),
\end{align*}
and we also have the same estimates for $g_\varepsilon-\pi^*g_0$ and $\varepsilon^{-2}(\mathcal{I}_{p,\varepsilon}^{-1})^*g_\varepsilon-g_p$.
By the above estimates, we can see that the K\"ahler metrics $g_\varepsilon$ of $\omega_\varepsilon$ satisfy 
\begin{align}
\lim_{\varepsilon\to 0}\sup_{X_t}|\nabla_0^k(g_\varepsilon-\pi^*g_0)|_{g_0}&=0,\label{eq asymp1}\\
\lim_{\varepsilon\to 0}\sup_{\mathcal{I}_{p,\varepsilon}^{-1}(U(p,t))}(\max\{ 1,\varpi_p^*r\})^k|\nabla^k_{g_p}(\varepsilon^{-2}(\mathcal{I}_{p,\varepsilon}^{-1})^*g_\varepsilon-g_p)|_{g_p}&=0\label{eq asymp2}
\end{align}
for every $0<t\le t_0$ and $k\ge 0$. In this subsection, we only assume that $(g_\varepsilon)_\varepsilon$ is a family of Riemannian metrics on $X$ satisfying \eqref{eq asymp1}\eqref{eq asymp2}, not necessarily to be the K\"ahler metric defined in Subsection \ref{subsec appro}. 


\begin{prop}[{Weighted Schauder estimate}]
Let $g_\varepsilon$ be Riemannian metrics on $X=X(\Lambda,\Gamma)$ satisfying \eqref{eq asymp1}\eqref{eq asymp2}.
There are positive constants $\varepsilon_0$ and $C_{k,\delta}$ independent of $\varepsilon$ such that 
\begin{align*}
\| u\|_{C^{k+2,\alpha}_{\delta,g_\varepsilon}}
\le C_{k,\delta}(\| \Delta_{g_\varepsilon}(u)\|_{C^{k,\alpha}_{\delta-2,g_\varepsilon}}
+\| u\|_{C^0_{\delta,g_\varepsilon}})
\end{align*}
for all $0<\varepsilon\le\varepsilon_0$ and $u\in C^{k+2,\alpha}_{\delta,g_\varepsilon}(X)$. 
\label{prop weight sch}
\end{prop}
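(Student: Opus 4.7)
The plan is the standard rescaling argument for weighted Schauder estimates. Fix $x_0\in X$, write $\sigma := \sigma_\varepsilon(x_0)$, and set $\tilde g := \sigma^{-2} g_\varepsilon$, so that a $g_\varepsilon$-ball of radius $R\sigma$ centered at $x_0$ becomes a $\tilde g$-ball of radius $R$. From the proof of Lemma \ref{lem k,a < k+1} we already know $\sigma_\varepsilon \ge \sigma/N$ on $B := B_{g_\varepsilon}(x_0, R\sigma)$, and by the injectivity radius discussion of Subsection \ref{subsec weighted holder}, $\tilde g$ has injectivity radius bounded below by a fixed constant at $x_0$. Depending on the location of $x_0$, one of three cases arises: if $\sigma$ is bounded away from $0$ (so $x_0$ lies in the bulk $X_t$ for some fixed small $t$), then \eqref{eq asymp1} forces $\tilde g$ to be $C^{k,\alpha}$-close to a rescaled flat metric; if $\sigma \sim \varepsilon$, we pull back by $\mathcal{I}_{p,\varepsilon}$ and \eqref{eq asymp2} gives that $\tilde g$ is $C^{k,\alpha}$-close to the fixed ALE metric $g_p$ on a unit ball in $M_p$; in the neck case $\varepsilon \ll \sigma \ll 1$, the same pull-back combined with the ALE decay in \eqref{eq asymp2} shows $\tilde g$ is close to the flat cone metric on a unit annulus in $\C^2 / \Gamma_p$. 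In all three cases, the family $\{\tilde g\}_{\varepsilon, x_0}$ is precompact in $C^{k,\alpha}$ on the unit ball.

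Define $\tilde u(x) := \sigma^{-\delta} u(x)$, with $\sigma$ a constant on $B$. Since constant conformal rescalings leave the Levi-Civita connection unchanged, one has
\begin{align*}
|\nabla^j_{\tilde g} \tilde u|_{\tilde g} = \sigma^{-\delta + j} |\nabla^j_{g_\varepsilon} u|_{g_\varepsilon}, \qquad \Delta_{\tilde g} \tilde u = \sigma^{2 - \delta} \Delta_{g_\varepsilon} u,
\end{align*}
so the weighted $C^{k+2,\alpha}_{\delta, g_\varepsilon}$ and $C^{k,\alpha}_{\delta-2, g_\varepsilon}$ norms of $u$ and $\Delta_{g_\varepsilon} u$ on $B$ reduce to ordinary Hölder norms of $\tilde u$ and $\Delta_{\tilde g} \tilde u$ on the $\tilde g$-unit ball. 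Applying the standard interior Schauder estimate for the Laplacian in the metric $\tilde g$ yields
\begin{align*}
\| \tilde u \|_{C^{k+2,\alpha}(B^{\tilde g}_{R/2})} \le C \bigl( \| \Delta_{\tilde g} \tilde u \|_{C^{k,\alpha}(B^{\tilde g}_{R})} + \| \tilde u \|_{C^0(B^{\tilde g}_{R})} \bigr),
\end{align*}
where $C$ may be taken independent of $x_0$ and $\varepsilon$ by the precompactness established above. Unwinding the scaling at each $x_0$ and taking the supremum controls the pointwise part of $\| u \|_{C^{k+2,\alpha}_{\delta, g_\varepsilon}}$. For the Hölder seminorm piece, given $x \sim y$ one centers the rescaling at $x$; the definition of $\sim$ guarantees $y \in B_{g_\varepsilon}(x, R\sigma_\varepsilon(x))$, and the inequality $\sigma_\varepsilon(y) \ge \sigma_\varepsilon(x)/N$ from Lemma \ref{lem k,a < k+1} converts the $\min\{\sigma_\varepsilon(x),\sigma_\varepsilon(y)\}$ factor in the weighted seminorm into $\sigma_\varepsilon(x)$ up to a uniform constant, so the seminorm contribution is absorbed into the rescaled Schauder estimate.

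The main obstacle is the uniformity of the Schauder constant $C$ as $\varepsilon \to 0$ and $x_0$ varies, which is exactly the $C^{k,\alpha}$-precompactness of $\{\tilde g\}$ asserted in the first paragraph. This is routine in the bulk and deepest-bubble regions, where the rescaled metric converges to a single fixed limit by \eqref{eq asymp1} or \eqref{eq asymp2}. The delicate case is the neck, where one must show that the family $\sigma^{-2}(\mathcal{I}_{p,\varepsilon}^{-1})^* g_\varepsilon$, with $\varepsilon \ll \sigma \ll 1$ and varying centers, lies in a precompact set; this follows from the ALE decay rate $|\nabla^k_{g_p}(\varepsilon^{-2}(\mathcal{I}_{p,\varepsilon}^{-1})^* g_\varepsilon - g_p)|_{g_p} = O(r^{-4-k})$ appearing before \eqref{eq asymp2}, together with the asymptotic flatness of $g_p$ itself. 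Once the uniform Schauder constant is secured, the assembly of the global estimate from the local pieces is standard bookkeeping.
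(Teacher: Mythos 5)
Your proposal is correct and follows essentially the same route as the paper: both rescale to unit scale around each point (the paper organizes this as a fixed three-region decomposition into bulk, dyadic neck annuli, and bubble, each with rescaled metrics lying in a uniformly controlled class to which the standard interior Schauder estimate applies), and then reassemble the weighted estimate. The only cosmetic difference is that you phrase the uniformity of the Schauder constant via $C^{k,\alpha}$-precompactness of the rescaled metrics, whereas the paper packages it as membership in a fixed class $\mathfrak{M}(g,\Omega,(A_k)_k)$; these are interchangeable.
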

The proof is based on the standard Schauder estimate.
\begin{thm}[{\cite{GilbargTrudinger}, Corollary 6.3 and Problem 6.1}]
Let $\Omega\subset \R^n$ be open and bounded, 
$\Omega'\subset \Omega$ be open such that 
$\overline{\Omega}'\subset \Omega$, 
$u\in C^{k+2,\alpha}(\Omega)$ be a bounded solution of 
\begin{align*}
Lu:=a^{ij}\del_i\del_ju+b^i\del_iu+cu=f,
\end{align*}
where $a^{ij}=a^{ji},b^i,c,f\in C^{k,\alpha}(\Omega)$. Suppose there are constants 
$\lambda,\Lambda,d,L>0$ such that 
$(a^{ij})_{i,j}\ge \lambda$ on $\Omega$ and 
\begin{align*}
&\max\left\{ \| a^{ij}\|_{C^{k,\alpha}(\Omega)},\,
\|b^i\|_{C^{k,\alpha}(\Omega)},\,
\|c\|_{C^{k,\alpha}(\Omega)}\right\} \le \Lambda,\\
&{\rm dist}(\Omega',\del\Omega)\ge d.
\end{align*}
Then there is a constant $C=C(n,k,\alpha,\lambda,\Lambda,d)$ such that 
\begin{align*}
\| u\|_{C^{k+2,\alpha}(\Omega')}
&\le C\left( \| f\|_{C^{k,\alpha}(\Omega)}+\| u\|_{C^0(\Omega)}
\right).
\end{align*}
\end{thm}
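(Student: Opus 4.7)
My plan is to follow the classical four-step scheme of Gilbarg--Trudinger. The base case is the Poisson equation $\Delta u = f$ on a ball $B_r\subset \R^n$: I would write $u = w+h$ with $w(x) = \int_{B_r}\Gamma(x-y)f(y)\,dy$ the Newtonian potential and $h=u-w$ harmonic. The key analytic input is that $f\mapsto \del_i\del_j w$ is a Calder\'on--Zygmund singular integral whose kernel satisfies the cancellation property $\int_{\del B_\rho}K\,dS=0$; together with H\"older continuity of $f$, a scale-by-scale split of the kernel yields $\|\del_i\del_j w\|_{C^{0,\alpha}(B_{r/2})}\le C\|f\|_{C^{0,\alpha}(B_r)}$. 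Interior estimates for harmonic functions then bound all derivatives of $h$ on $B_{r/2}$ by $\|h\|_{C^0(B_r)}$, yielding the desired $C^{2,\alpha}$ estimate on $B_{r/2}$. The reduction from a general constant-coefficient operator $a^{ij}\del_i\del_j$ to $\Delta$ is effected by factoring the symmetric positive-definite matrix $(a^{ij})$ as $TT^\top$ and making the linear change of variables $y=T^{-1}x$; the bounds on $\|T\|$ and $\|T^{-1}\|$ are controlled by $\lambda$ and $\Lambda$.

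For the variable-coefficient case with $k=0$, I would freeze the leading coefficients at a point $x_0\in\Omega$ and rewrite the equation as
\begin{align*}
a^{ij}(x_0)\del_i\del_j u = f - \bigl(a^{ij}(x)-a^{ij}(x_0)\bigr)\del_i\del_j u - b^i\del_i u - cu.
\end{align*}
On a small ball $B_\rho(x_0)\subset\Omega$, H\"older continuity of $a^{ij}$ gives $|a^{ij}(x)-a^{ij}(x_0)|\le \Lambda\rho^\alpha$, so applying the constant-coefficient estimate and invoking the standard interpolation inequalities $\|u\|_{C^{1,\alpha}}\le \theta\|u\|_{C^{2,\alpha}}+C(\theta)\|u\|_{C^0}$ allows one to absorb the perturbation and lower-order terms into the left-hand side, provided $\rho$ is chosen small enough (depending only on $n,\alpha,\lambda,\Lambda$). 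A finite cover of $\Omega'$ by balls of radius $\rho$, all contained in $\Omega$ thanks to $\mathrm{dist}(\Omega',\del\Omega)\ge d$, together with cutoff functions adapted to this covering, then yields the $k=0$ estimate with the desired constant. To reach higher $k$, I would differentiate the equation and note that $v=\del_l u$ solves an elliptic equation of the same form with coefficients and right-hand side in $C^{k-1,\alpha}$; an induction on $k$ on a telescoping family of subdomains between $\Omega'$ and $\Omega$ delivers the full estimate.

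The main technical obstacle is the base case: showing that the Calder\'on--Zygmund operator $f\mapsto \del_i\del_j(\Gamma\ast f)$ maps $C^{0,\alpha}$ to itself boundedly. The proof splits the difference $\del_i\del_j w(x) - \del_i\del_j w(y)$ into a near part (where one estimates pointwise using H\"older continuity of $f$ together with integrability of $|z|^{-n}$ over small annuli) and a far part (where the mean-value zero of $K$ over spheres and the kernel gradient estimate $|K(x-z)-K(y-z)|\le C|x-y|\,|z|^{-n-1}$ are used). Everything else in the argument is scaling, freezing of coefficients, and interpolation, and the constants track out to depend only on the advertised parameters $n,k,\alpha,\lambda,\Lambda,d$.
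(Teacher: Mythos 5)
The paper does not prove this statement at all: it is quoted verbatim as a known result from Gilbarg--Trudinger (Corollary 6.3 for $k=0$ and Problem 6.1 for the higher-order case), so there is no in-paper argument to compare against. Your sketch is the standard textbook proof of the interior Schauder estimate --- Newtonian potential plus harmonic remainder for the Poisson base case, the Calder\'on--Zygmund near/far splitting with the spherical cancellation of the kernel for the $C^{0,\alpha}$ bound on $\del_i\del_jw$, a linear change of variables for general constant coefficients, freezing of coefficients with absorption via the interpolation inequality on small balls, a finite cover of $\Omega'$ using ${\rm dist}(\Omega',\del\Omega)\ge d$, and induction on $k$ by differentiating the equation on telescoping subdomains --- and it is correct in outline. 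The one place worth being slightly more careful is the absorption step: the H\"older seminorm of $(a^{ij}(x)-a^{ij}(x_0))\del_i\del_ju$ on $B_\rho(x_0)$ contains not only the small term $\Lambda\rho^\alpha[D^2u]_{\alpha}$ but also $[a^{ij}]_\alpha\|D^2u\|_{C^0}$, which is not small in $\rho$ and must itself be handled by the interpolation inequality you invoke; your sketch mentions interpolation only for the lower-order terms, but the same device covers this term, so this is a matter of bookkeeping rather than a gap.
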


As a consequence of the standard Schauder estimate, we have the next estimate. 
\begin{cor}
Let $(M,g)$ be a Riemannian manifold and $\Omega'\subset \Omega\subset M$ be open sets such that $\overline{\Omega'}\subset \Omega$ and $\overline{\Omega'}$ is compact. 
Denote by $\Delta_g$ the Laplace-Beltrami operator of $g$. Let $A_0\ge 1,A_1,A_2\ldots$ be positive constants  and denote by $\mathfrak{M}(g,\Omega,(A_k)_{k=0}^\infty)$ the set consisting of Riemannian metrics $h$ on $M$ with 
\begin{align*}
A_0^{-1}g|_{\Omega}\le h|_{\Omega}\le A_0g|_{\Omega},\quad |\nabla_g^kh|_{\Omega}|\le A_k.
\end{align*}
Then there is a constant $C_k=C(g,\Omega',\Omega,A_0,\ldots,A_k)$ such that 
\begin{align*}
\| u\|_{C^{k+2,\alpha}_h(\Omega')}
&\le C_k\left( \| \Delta_h u\|_{C^{k,\alpha}_h(\Omega)}+\| u\|_{C^0(\Omega)}
\right)
\end{align*}
for all $h\in\mathfrak{M}(g,\Omega,(A_k)_{k=0}^\infty)$ and $u\in C^{k+2,\alpha}_h(\Omega')$. 
\label{cor schauder local}
\end{cor}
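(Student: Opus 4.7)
The plan is to reduce to the cited Euclidean Schauder theorem via a finite covering by $g$-coordinate charts, exploiting the uniform $C^\infty$-bounds on $h$ encoded in $\mathfrak{M}$. Since $\overline{\Omega'}\subset\Omega$ is compact, I would pick a finite family of $g$-coordinate charts $(\phi_i\colon U_i\to \widetilde{U}_i\subset \R^n)_{i=1}^N$ and open sets $V_i\subset\subset U_i\subset\subset\Omega$ so that $\overline{\Omega'}\subset \bigcup_iV_i$. In each chart the Laplace--Beltrami operator reads
\[
\Delta_h u = h^{ij}\,\partial_i\partial_j u + b^i(h)\,\partial_i u,
\]
where $b^i(h)$ is a rational expression in $h_{ij}$, $\det h$, and their first coordinate derivatives. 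The hypothesis $A_0^{-1}g\le h\le A_0 g$ gives uniform ellipticity with constant $\lambda=\lambda(A_0,g|_{U_i})>0$.

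The bounds $|\nabla_g^\ell h|_g\le A_\ell$ for $\ell=0,1,\ldots,k+1$ translate into coordinate bounds on $\partial^\beta h_{ij}$ for $|\beta|\le k+1$ on $\widetilde U_i$ (with constants depending only on the fixed $g$-Christoffels on the chosen charts), and hence to $C^{k,\alpha}(\widetilde U_i)$ bounds on $h^{ij}$ and $b^i(h)$ that are uniform in $h\in\mathfrak{M}$. Applying the cited Schauder estimate on each pair $(\widetilde U_i,\phi_i(V_i))$ with $L=\Delta_h$ and $f=\Delta_h u$ then yields
\[
\|u\|_{C^{k+2,\alpha}(\phi_i(V_i))}\le C\bigl(\|\Delta_h u\|_{C^{k,\alpha}(\widetilde U_i)}+\|u\|_{C^0(\widetilde U_i)}\bigr),
\]
with $C$ independent of $h$.

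It remains to convert the coordinate H\"older norms into the intrinsic norms $\|\cdot\|_{C^{k,\alpha}_h}$ used in the statement. The covariant derivative $\nabla_h^j u$ expressed in coordinates is a polynomial in partial derivatives of $u$ of order $\le j$ whose coefficients are universal polynomials in the Christoffel symbols $\Gamma_h$ and their coordinate derivatives of order $\le j-2$; conversely, $\partial^\beta u$ can be recovered as a polynomial in $\nabla_h^{|\beta|}u,\ldots,\nabla_h u,u$ and the same $\Gamma_h$-data. Because $h\in\mathfrak{M}$ is uniformly $C^{k+1}$-bounded on $U_i$, the $\Gamma_h$ and their needed derivatives are uniformly bounded on $V_i$, so the two families of norms are uniformly equivalent. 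The H\"older seminorm defined via $d_h$ and $h$-parallel transport is likewise comparable to the Euclidean one on $V_i$, since $d_h$ is uniformly equivalent to the Euclidean coordinate distance and $h$-parallel transport along short $h$-geodesics differs from coordinate translation by a matrix that is $C^m$-close to the identity for any prescribed $m$, uniformly in $h$. Combining this equivalence with the Schauder bound above and summing over the finite cover gives the claim.

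The only step requiring any care is this norm conversion: the algebraic expansion of iterated $\nabla_h^k$-derivatives in terms of ordinary partial derivatives must be controlled uniformly in $h\in\mathfrak{M}$. This bookkeeping is routine, because the assumed bounds on $\nabla_g^\ell h$ supply uniform control on every derivative of $h$ that can appear; no substantive analysis beyond the cited Schauder input is needed.
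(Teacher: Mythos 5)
Your proposal is correct and follows exactly the route the paper intends: the paper states this corollary without proof as "a consequence of the standard Schauder estimate," and the derivation it has in mind is precisely your finite chart cover, the uniform-in-$h$ coordinate bounds on $h^{ij}$ and $b^i(h)$ coming from the definition of $\mathfrak{M}$, the cited Gilbarg--Trudinger interior estimate on each chart, and the uniform equivalence of coordinate and intrinsic $C^{k,\alpha}_h$ norms. The only (immaterial) quibble is bookkeeping of how many derivatives of $h$ are needed: controlling $\|b^i(h)\|_{C^{k,\alpha}}$ uses bounds on $\nabla_g^\ell h$ up to $\ell=k+2$ rather than $k+1$, but since $\mathfrak{M}$ carries bounds for all $\ell$ this changes nothing.
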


On the Riemannian manifold $(X,g)$ and a positive valued function $\sigma\in C^\infty(X)$, define the weighted H\"older norm $\|\cdot \|_{C^{k,\alpha}_{\delta,\sigma,g}}$ with respect to the weight function $\sigma$ similarly as in Subsection \ref{subsec weighted holder}. 
Note that the ordinary H\"older norm can be written as $\|\cdot\|_{C_g^{k,\alpha}}=\|\cdot\|_{C^{k,\alpha}_{\delta,1,g}}$ for every $\delta\in\R$. It is easy to check that $\|\cdot \|_{C^{k,\alpha}_{\delta,\sigma,\lambda^2g}}=\lambda^{-\delta}\|\cdot \|_{C^{k,\alpha}_{\delta,\sigma/\lambda,g}}$ for a positive constant $\lambda$. 

\begin{proof}[{Proof of Proposition \ref{prop weight sch}}]
By \eqref{eq asymp1}, the family $\{ g_\varepsilon\}_{0<\varepsilon\ll 1}$ is included in $\mathfrak{M}(\pi^*g_0,X_{t_0/4},(A_k)_k)$ for some constants $A_k$. 
Then Corollary \ref{cor schauder local} implies that there is $C_1>0$ independent of $\varepsilon$ such that 
\begin{align*}
\| u\|_{C^{k+2,\alpha}_{\delta,1,g_\varepsilon}(X_{t_0/2})}
&\le C_1\left( \| \Delta_{g_\varepsilon} u\|_{C^{k,\alpha}_{\delta-2,1,g_\varepsilon}(X_{t_0/4})}
+\| u\|_{C^0_{\delta,1,g_\varepsilon}(X_{t_0/4})}
\right)
\end{align*}
for all $u\in C^{k+2,\alpha}_{\delta,1,g_\varepsilon}(X_{t_0/4})$. 

By \eqref{eq asymp2}, $\{ h^p_\varepsilon:=\varepsilon^{-2}(\mathcal{I}_{p,\varepsilon}^{-1})^*g_\varepsilon\}_{0<\varepsilon\ll 1}$ is included in $\mathfrak{M}(g_p,\varpi_p^{-1}(B_{\Gamma_p}(4)),(A_k)_k)$ for some constants $A_k$. Accordingly, we have a constant $C_2>0$ such that 
\begin{align*}
\| u\|_{C^{k+2,\alpha}_{\delta,1,h^p_\varepsilon}(\varpi_p^{-1}(B_{\Gamma_p}(3)))}
&\le C_2\left( \| \Delta_{h^p_\varepsilon} u\|_{C^{k,\alpha}_{\delta-2,1,h^p_\varepsilon}(\varpi_p^{-1}(B_{\Gamma_p}(4)))}
+\| u\|_{C^0_{\delta,1,h^p_\varepsilon}(\varpi_p^{-1}(B_{\Gamma_p}(4)))}
\right).
\end{align*}
By observing the behavior of weighted H\"older norms under the rescaling, we obtain 
\begin{align*}
\| u\|_{C^{k+2,\alpha}_{\delta,\varepsilon,g_\varepsilon}(Z_{3\varepsilon})}
&\le C_2\left( \| \Delta_{g_\varepsilon} u\|_{C^{k,\alpha}_{\delta-2,\varepsilon,g_\varepsilon}(Z_{4\varepsilon})}
+\| u\|_{C^0_{\delta,\varepsilon,g_\varepsilon}(Z_{4\varepsilon})}
\right).
\end{align*}

Next, we consider the estimate on $Y_{p,t_0,2\varepsilon}$. The flat Riemannian manifold  $(Y_{p,t_0,2\varepsilon},g_0)$ can be isometrically embedded in the Riemannian cones $(\C^2\setminus\{ 0\})/\Gamma_p\cong S^3/\Gamma_p\times \R_+$. 
For $\lambda>0$, define $T_\lambda\colon (\C^2\setminus\{ 0\})/\Gamma_p\to (\C^2\setminus\{ 0\})/\Gamma_p$ by $T_\lambda(x)=\lambda\cdot x$.

We put $Y'_n:=Y_{p,2^{-n}t_0,2^{-n-2}t_0}$ and $Y_n:=Y_{p,2^{-n+1}t_0,2^{-n-3}t_0}$. 
Then $Y'_n\subset Y_n$ and $Y_{p,t_0,2\varepsilon}\subset\bigcup_{n=1}^NY'_n$ for some $N$. Here, $N$ is the smallest positive integer such that $2^{-N-2}t_0\le 2\varepsilon$. 
Since $2^{2n}T_{2^{-n}}^*g_0=g_0$, if we put $g_{n,\varepsilon}:=2^{2n}T_{2^{-n}}^*(g_\varepsilon|_{Y_n})$, 
the family of metrics $\{ g_{n,\varepsilon}\}_{n,0<\varepsilon\ll 1}$ 
is included in $\mathfrak{M}(g_0,Y_0,(A_k)_k)$ for some constants $A_k$ by \eqref{eq asymp2}. Therefore, there is a constant $C_3>0$ independent of $\varepsilon,n$ such that 
\begin{align*}
\| u\|_{C^{k+2,\alpha}_{\delta,1,g_{n,\varepsilon}}(Y'_0)}
&\le C_3\left( 2^{-2n}\| \Delta_{T_{2^{-2n}}^*(g_\varepsilon|_{Y_n})} u\|_{C^{k,\alpha}_{\delta-2,1,g_{n,\varepsilon}}(Y_0)}
+\| u\|_{C^0_{\delta,1,g_{n,\varepsilon}}(Y_0)}
\right).
\end{align*}
By the behavior of the weighted H\"older norm under the rescaling, and patching the estimates for all $n=0,1,\ldots$, there is a constant $C_3'>0$ independent of $\varepsilon$ such that  
\begin{align*}
\| u\|_{C^{k+2,\alpha}_{\delta,r,g_\varepsilon}(Y_{p,t_0,2\varepsilon})}
&\le C_3'\left( \| \Delta_{g_\varepsilon} u\|_{C^{k,\alpha}_{\delta-2,r,g_\varepsilon}(Y_{p,2t_0,\varepsilon})}
+\| u\|_{C^0_{\delta,r,g_\varepsilon}(Y_{p,2t_0,\varepsilon})}
\right).
\end{align*}
By patching the estimates on $X_{t_0/2}, Y_{p,t_0,2\varepsilon},Z_{3\varepsilon}$, we have the assertion. 
\end{proof}

Theorem \ref{thm schauder} follows directly from the next proposition. 
\begin{prop}
Let $g_\varepsilon$ be Riemannian metrics on $X=X(\Lambda,\Gamma)$ satisfying \eqref{eq asymp1}\eqref{eq asymp2}. Denote by $\mu_{g_\varepsilon}$ the Riemannian measure of $g_\varepsilon$ and put 
\begin{align*}
E^{k,\alpha}_{\delta,g_\varepsilon}
:=\left\{ \psi\in C^{k,\alpha}_{\delta,g_\varepsilon}\left|\, \int_X\psi d\mu_{g_\varepsilon}=0\right.\right\}.
\end{align*}
Let $-2<\delta<0$. 
Then there are positive constants $\varepsilon_0$ and $K_{k,\delta}$ independent of $\varepsilon$ such that 
\begin{align*}
\| u\|_{C^{k+2,\alpha}_{\delta,g_\varepsilon}}
\le K_{k,\delta}\| \Delta_{g_\varepsilon}(u)\|_{C^{k,\alpha}_{\delta-2,g_\varepsilon}}
\end{align*}
for all $0<\varepsilon\le\varepsilon_0$ and $u\in E^{k+2,\alpha}_{\delta,g_\varepsilon}(X)$. 
\label{prop weight sch int 0}
\end{prop}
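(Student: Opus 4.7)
The plan is to prove this by a standard contradiction-plus-rescaling argument, bootstrapping from the weighted Schauder estimate of Proposition~\ref{prop weight sch}. Suppose the desired inequality fails: then there exist sequences $\varepsilon_i\to 0$ and $u_i\in E^{k+2,\alpha}_{\delta,g_{\varepsilon_i}}$ normalised so that $\| u_i\|_{C^{k+2,\alpha}_{\delta,g_{\varepsilon_i}}}=1$ while $\|\Delta_{g_{\varepsilon_i}}u_i\|_{C^{k,\alpha}_{\delta-2,g_{\varepsilon_i}}}\to 0$. Feeding this into Proposition~\ref{prop weight sch} forces $\| u_i\|_{C^0_{\delta,g_{\varepsilon_i}}}\ge c>0$ for some $c$ independent of $i$, so I can pick $x_i\in X$ with $\sigma_{\varepsilon_i}(x_i)^{-\delta}|u_i(x_i)|\ge c/2$. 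Set $s_i:=\sigma_{\varepsilon_i}(x_i)\in[\varepsilon_i,1]$; after passing to a subsequence, $x_i$ is always closest to a single $p\in S$ (if it approaches $S$ at all), and I split into three regimes: (a) $s_i\to s_\infty>0$ (torus scale), (b) $s_i/\varepsilon_i$ bounded (ALE scale at $p$), (c) $s_i\to 0$ with $s_i/\varepsilon_i\to\infty$ (conical scale at $p$). In each case I rescale both the metric and $u_i$ appropriately, use \eqref{eq asymp1}--\eqref{eq asymp2} and Corollary~\ref{cor schauder local} together with a diagonal extraction to produce a limit harmonic function on a model space, and rule it out.

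In case (a), $x_i\to x_\infty\in \T\setminus S$ and a subsequence of $u_i$ converges in $C^{k+1,\alpha}_{\rm loc}(\T\setminus S)$ to $u_\infty$ with $\Delta_{g_0}u_\infty=0$ and $|u_\infty(x)|\lesssim d_\T(x,S)^\delta$ near $S$. Since $\delta>-2$, this growth is removable for harmonic functions, so $u_\infty$ extends to a constant on $\T$; the mean-zero condition $\int_Xu_i\,d\mu_{g_{\varepsilon_i}}=0$ passes to the limit by dominated convergence (with uniform majorant $\sigma_{\varepsilon_i}^\delta$, integrable because $\delta>-2$), forcing $u_\infty\equiv 0$, which contradicts $|u_\infty(x_\infty)|\ge (c/2)s_\infty^\delta>0$. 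In case (b), the rescaled functions $\tilde u_i(y):=\varepsilon_i^{-\delta}u_i(\mathcal{I}_{p,\varepsilon_i}^{-1}(y))$ satisfy $|\tilde u_i(y)|\lesssim \max(1,r(y))^\delta$, the pulled-back metrics $h^p_{\varepsilon_i}$ tend to $g_p$ on compacts of $M_p$ by \eqref{eq asymp2}, and $\Delta_{h^p_{\varepsilon_i}}\tilde u_i\to 0$ locally; the limit $\tilde u_\infty$ is therefore a bounded harmonic function on the complete Ricci-flat ALE space $(M_p,g_p)$ that tends to $0$ at infinity, and hence vanishes identically by Yau's Liouville theorem, contradicting the lower bound $|\tilde u_i(\mathcal{I}_{p,\varepsilon_i}(x_i))|\gtrsim 1$ at a point with bounded $r$-coordinate. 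In case (c), I rescale by $s_i^{-1}$ about $p$: composing the chart $\mathcal{I}_{p,\varepsilon_i}$ with the dilation $T_{\varepsilon_i/s_i}$ on $\C^2/\Gamma_p$ produces metrics $\widehat g_i$ on an exhausting family of annuli in $(\C^2/\Gamma_p)\setminus\{0\}$ converging to the flat cone metric $g_0$ by \eqref{eq asymp1}--\eqref{eq asymp2}, while the rescaled functions $\hat u_i$ satisfy $|\hat u_i(z)|\lesssim \| z\|^\delta$ and $|\Delta_{\widehat g_i}\hat u_i|\to 0$ locally. The limit $\hat u_\infty$ is $\Gamma_p$-invariant and harmonic on $(\C^2/\Gamma_p)\setminus \{0\}$ with $|\hat u_\infty(z)|\lesssim \| z\|^\delta$ globally; a spherical-harmonic decomposition shows that the homogeneity exponents of harmonic functions on the punctured cone lie in $\{0,1,2,\ldots\}\cup\{-2,-3,\ldots\}$, and since this set is disjoint from $(-2,0)$, the global bound forces $\hat u_\infty\equiv 0$, again contradicting the lower bound at the rescaled image of $x_i$ (which has $\|z\|\sim 1$).

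The main technical obstacle is case (c): one must justify the correct rescaling factor $s_i^{-1}$, produce uniform local Schauder estimates under rescaling across the continuum of scales $\varepsilon_i\ll s_i\ll 1$, and organise a diagonal extraction so that the limit is defined on all of $(\C^2/\Gamma_p)\setminus\{0\}$ with the global weighted bound. The exponent range $-2<\delta<0$ plays a double role: it makes the singularities at $S$ removable in case (a), and it sits strictly inside the spectral gap $(-2,0)$ of homogeneity exponents for harmonic functions on the cone, which is what closes the conical Liouville argument. The mean-zero hypothesis is invoked only in case (a) to eliminate the otherwise unavoidable constant limit; in cases (b) and (c) the weighted decay of the limit at the natural ``infinity'' of the model space already forces it to vanish.
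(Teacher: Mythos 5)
Your proposal is correct and follows essentially the same strategy as the paper's proof: a contradiction argument normalizing the weighted norm, followed by a blow-up analysis at the three scales (torus, cone, ALE), using removability plus the mean-zero condition on the torus, the spectral gap $(-2,0)$ of homogeneity exponents on the cone, and a Liouville/maximum-principle argument on the ALE space. The only cosmetic differences are that the paper normalizes $\| u_n\|_{C^0_{\delta,g_{\varepsilon_n}}}=1$ directly and kills the decaying harmonic limit on $M_p$ by the maximum principle rather than citing Yau's Liouville theorem.
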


\begin{proof}
We prove by contradiction. 
If we deny the conclusion, then by Proposition \ref{prop weight sch}, 
there are $\varepsilon_n>0$ and $u_n\in E^{k+2,\alpha}_{\delta,g_{\varepsilon_n}}(X)$ such that 
$\| u_n\|_{C^0_{\delta,g_{\varepsilon_n}}}=1$ and $\lim_{n\to 0}\| \Delta_{g_{\varepsilon_n}}(u_n)\|_{C^{k,\alpha}_{\delta-2,g_{\varepsilon_n}}}=0$. 
If $\inf_n\varepsilon_n>0$, it contradicts  the injectivity of $\Delta_{g_{\varepsilon}}$, with $\varepsilon=\inf_n\varepsilon_n$. 
We may suppose $\varepsilon_n\searrow 0$. In this argument, if we can derive a convergent subsequence, we can then assume that the original sequence itself converges by replacing the sequence with its convergent subsequence, for the simplicity of the argument. 

By the assumption, $\| u_n\|_{C^{k+2,\alpha}_{\delta,g_{\varepsilon_n}}}$ is bounded. On every compact subset $D$ of $X\setminus \pi^{-1}(S)$, $\sigma_{\varepsilon_n}$ are bounded from below by a constant independent of $n$. Therefore, $\| u_n|_D\|_{C^{k+2,\alpha}_{g_{\varepsilon_n}}}$ is bounded and $\{ \nabla_{g_{\varepsilon_n}}^ju_n|_D\}_n$ are uniformly bounded. Moreover, since $R\sigma_{\varepsilon_n}$ are also bounded from below, there is a constant $\varepsilon'>0$ such that $d_{g_{\varepsilon_n}}(x,y)<\varepsilon'$ implies $x\sim y$ for all $x,y\in D$ and $n$. Therefore, $\{ \nabla_{g_{\varepsilon_n}}^ju_n|_D\}_n$ are equicontinuous. Then by Ascoli-Arzel\`a Theorem, we may suppose that there is $u\in C^{k+2}(X\setminus \pi^{-1}(S))$ such that $u_n\to u$ in $C^{k+2}(D)$ on every compact subset $D$ of $X\setminus \pi^{-1}(S)$. On $D$, $g_{\varepsilon_n}\to \pi^*g_0$ with respect to every $C^k$-norm, hence $\Delta_{\pi^*g_0}(u)=0$ on $X\setminus \pi^{-1}(S)$. Now, we regard $u$ as a function on $\T\setminus S$, then $\Delta_{g_0}(u)=0$. By the uniform bound of $u_n$ with respect to $C^0_{\delta,g_{\varepsilon_n}}$-norm, we have 
$|u|\le Cd_\T(\cdot,p)^{-1}$ on a small neighborhood of $p\in S$. Since $u$ is harmonic with respect to the flat metric $g_0$, every $p$ is a removable singularity and $u$ can be extended to a smooth function on the orbifold $\T$. If we denote by $F\colon \T^4_\Lambda\to \T$ the quotient map, then $F^*u$ is a harmonic function on $4$-torus, hence it is constant. By the condition that $\int_X u_n d\mu_{g_{\varepsilon_n}}=0$, we can see that $u\equiv 0$. 

By $\| u_n\|_{C^0_{\delta,g_{\varepsilon_n}}}=1$, 
there are $p_n\in X$ such that 
\begin{align}
\sigma_{\varepsilon_n}(p_n)^{-\delta}|u_n(p_n)|=1\label{eq max un}
\end{align}
for every $n$. Since we have shown $u\equiv 0$, we can see that there is no subsequence of $(u_n)_n$ whose limit is in $X\setminus \pi^{-1}(S)$. 
Now, by taking a subsequence, we may suppose $\lim_{n\to \infty}d_\T(\pi(p_n),p)=0$ for some $p\in S$. Put $r_n:=d_\T(\pi(p_n),p)$. 

Assume $\sup_nr_n/\varepsilon_n=\infty$. 
We consider $v_n:=r_n^{-\delta}T_{r_n}^*(u_n|_{Y_{p,t_0,\varepsilon_n}})$. 
Since $\| u_n\|_{C^{k+2,\alpha}_{\delta,g_{\varepsilon_n}}}$ is bounded, we can see that $\| v_n\|_{C^{k+2,\alpha}_{\delta,r_n^{-2}T_{r_n}^*g_{\varepsilon_n}}}$ is bounded and $r_n^{-2}T_{r_n}^*g_{\varepsilon_n}\to g_0$ with respect to the $C^k$-norm on every compact subset of $\C^2\setminus\{ 0\}/\Gamma_p$. 
Therefore, there is a $C^{k+2}$-function $v$ on $\C^2\setminus\{ 0\}/\Gamma_p$ such that $v_n\to v$ in $C^{k+2}$-topology on every compact subset in $\C^2\setminus\{ 0\}/\Gamma_p$. Moreover, we can show that $v$ is harmonic with respect to $g_0$ and $|v|=O(r^\delta)$. 
However, since we have assumed $-2<\delta<0$, such harmonic functions are always $0$, which contradicts \eqref{eq max un}. 

Accordingly, we may suppose $\sup_nr_n/\varepsilon_n<\infty$. By taking a subsequence, we may assume that the sequence $(\mathcal{I}_{p,\varepsilon_n}(p_n))_n$ converges to some $q\in M_p$. Similarly to the above argument, if we put $v_n':=\varepsilon_n^{-\delta}(\mathcal{I}_{p,\varepsilon_n}^{-1})^*(u_n|_{U(p,t_0)})$, then $v'_n$ converges to some $v'\in C^{k+2}(M_p)$ on every compact set in $M_p$, $v'$ is harmonic with respect to $g_p$ and $|v'|=O(r^\delta)$. 
By the maximum principle for the harmonic functions on $M_p$, we have $v'=0$, which contradicts \eqref{eq max un}. 
\end{proof}

\subsection{Calabi-Yau structures on $X$}
Let $X,\Omega,\pi,S,\omega_0,\Omega_0$ be as in Subsection \ref{subsec kummer} and assume $(M_p,\eta_p,\Theta_p,\varpi_p)_{p\in S}\approx (\T,S)$. Let $g_\varepsilon$ be the K\"ahler metric of $\omega_\varepsilon$ constructed in Subsection \ref{subsec appro}.
\begin{definition}
\normalfont
Let $T_\varepsilon\in\Omega^2(X)$ for $\varepsilon>0$, $T_0\in \Omega^2(\T\setminus S)$ and $T_p\in\Omega^2(M_p)$. 
We write $T_\varepsilon\stackrel{\T\setminus S}{\longrightarrow} T_0$ as $\varepsilon\to 0$ if 
\begin{align*}
\lim_{\varepsilon\to 0}\sup_K|\nabla_0^k\{ (\pi^{-1})^*T_\varepsilon -T_0\}|_{g_0}=0
\end{align*}
for any $k\ge 0$ and compact subset $K\subset \T\setminus S$. We write $T_\varepsilon\stackrel{S}{\to} (T_p)_{p\in S}$ as $\varepsilon\to 0$ if 
\begin{align*}
\lim_{\varepsilon\to 0}\sup_K|\nabla_{g_p}^k\{ (\mathcal{I}_{p,\varepsilon}^{-1})^*T_\varepsilon -T_p\}|_{g_p}=0
\end{align*}
for any $k\ge 0$, $p\in S$ and compact subsets $K\subset M_p$. Here, $g_0,g_p$ are K\"ahler metrics of $\omega_0,\eta_p$, respectively. 
\label{def conv bubble}
\end{definition}

\begin{lem}\label{lem conv 2-form}
Let $T_\varepsilon,\tilde{T}_\varepsilon\in\Omega^2(X)$ for $\varepsilon>0$, $T_0\in \Omega^2(\T\setminus S)$ and $T_p\in\Omega^2(M_p)$. Assume that 
\begin{align*}
\| \tilde{T}_\varepsilon-T_\varepsilon\|_{C^{k,\alpha}_{\delta-2,g_\varepsilon}}&\lesssim \varepsilon^{\delta'}
\end{align*}
for all $k\ge 0$ and some $\delta'\in\R$. 
\begin{itemize}
\setlength{\parskip}{0cm}
\setlength{\itemsep}{0cm}
 \item[$({\rm i})$] If $T_\varepsilon|_{X\setminus\bigcup \overline{U(p,2\varepsilon^{1/2})}}=\pi^*T_0|_{X\setminus\bigcup\overline{U(p,2\varepsilon^{1/2})}}$ and $\delta'>0$, then $\tilde{T}_\varepsilon\stackrel{\T\setminus S}{\rightarrow} T_0$.
 \item[$({\rm ii})$] If $T_\varepsilon|_{U(p,\varepsilon^{1/2})}=\varepsilon^2\mathcal{I}_{p,\varepsilon}^*T_p|_{U(p,\varepsilon^{1/2})}$ and $\delta'>2-\delta$, then $\varepsilon^{-2}\tilde{T}_\varepsilon\stackrel{S}{\rightarrow} (T_p)_{p\in S}$. 
\end{itemize}
\end{lem}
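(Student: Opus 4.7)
Both parts reduce to the same two-step recipe: on the relevant compact set $K$, the hypotheses already force $T_\varepsilon$ (in part (i)) or $\varepsilon^{-2}(\mathcal{I}_{p,\varepsilon}^{-1})^*T_\varepsilon$ (in part (ii)) to coincide pointwise with the target limit $T_0$ or $T_p$ for small $\varepsilon$. Therefore the only thing to control is the perturbation $\tilde T_\varepsilon - T_\varepsilon$, which is bounded in $C^{k,\alpha}_{\delta-2,g_\varepsilon}$ by $\varepsilon^{\delta'}$ by hypothesis. Translating this weighted $g_\varepsilon$-bound into an unweighted $g_0$- (resp.\ $g_p$-) bound on $K$ is then a direct application of \eqref{eq asymp1} (resp.\ \eqref{eq asymp2}), keeping careful track of the rescaling factors.

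\textbf{Part (i).} For a compact $K\subset \T\setminus S$, let $\delta_0=d_\T(K,S)>0$. Once $2\varepsilon^{1/2}<\delta_0$, we have $\pi^{-1}(K)\cap \overline{U(p,2\varepsilon^{1/2})}=\emptyset$ for every $p\in S$, so the hypothesis gives $(\pi^{-1})^*T_\varepsilon=T_0$ on $K$. On $\pi^{-1}(K)$ the weight satisfies $\sigma_\varepsilon\ge \delta_0/2$, hence the definition of the norm yields
\begin{align*}
|\nabla_{g_\varepsilon}^j(\tilde T_\varepsilon-T_\varepsilon)|_{g_\varepsilon}
\le \sigma_\varepsilon^{(\delta-2)-j}\|\tilde T_\varepsilon-T_\varepsilon\|_{C^j_{\delta-2,g_\varepsilon}}
\lesssim \varepsilon^{\delta'}.
\end{align*}
By \eqref{eq asymp1}, $g_\varepsilon\to \pi^*g_0$ in every $C^k$-norm on $\pi^{-1}(K)$, so norms and covariant derivatives taken with respect to $g_\varepsilon$ and $\pi^*g_0$ differ by factors that are uniformly bounded and converge to $1$. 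It follows that $|\nabla_0^j((\pi^{-1})^*(\tilde T_\varepsilon-T_\varepsilon))|_{g_0}\lesssim \varepsilon^{\delta'}\to 0$ on $K$, which together with $(\pi^{-1})^*T_\varepsilon=T_0$ gives $\tilde T_\varepsilon\stackrel{\T\setminus S}{\longrightarrow}T_0$.

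\textbf{Part (ii).} For a compact $K\subset M_p$, let $R:=\sup_{q\in K}|\varpi_p(q)|$. Once $\varepsilon R<\varepsilon^{1/2}$, the identity $\pi-p=\varepsilon\,\varpi_p\circ\mathcal{I}_{p,\varepsilon}$ forces $\mathcal{I}_{p,\varepsilon}^{-1}(K)\subset U(p,\varepsilon^{1/2})$, so the hypothesis gives $\varepsilon^{-2}(\mathcal{I}_{p,\varepsilon}^{-1})^*T_\varepsilon=T_p$ on $K$. At points of $\mathcal{I}_{p,\varepsilon}^{-1}(K)$ one has $\sigma_\varepsilon\lesssim \varepsilon$, so the weighted-norm hypothesis yields
\begin{align*}
|\nabla_{g_\varepsilon}^j(\tilde T_\varepsilon-T_\varepsilon)|_{g_\varepsilon}\lesssim \varepsilon^{(\delta-2)-j}\cdot \varepsilon^{\delta'}.
\end{align*}
By \eqref{eq asymp2}, $\varepsilon^{-2}(\mathcal{I}_{p,\varepsilon}^{-1})^*g_\varepsilon\to g_p$ in $C^k$ on $K$; equivalently, $(\mathcal{I}_{p,\varepsilon}^{-1})^*g_\varepsilon$ is $C^k$-close to the conformal rescaling $\varepsilon^{2}g_p$. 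For a $(0,2)$-tensor, rescaling the metric by $\varepsilon^{2}$ multiplies the norm by $\varepsilon^{-2}$, and each covariant derivative contributes an additional factor $\varepsilon^{-1}$ (the Christoffel differences between the pulled-back connection and $\nabla_{g_p}$ vanish in the limit by \eqref{eq asymp2} and are absorbed). Hence
\begin{align*}
|\nabla_{g_p}^j((\mathcal{I}_{p,\varepsilon}^{-1})^*(\tilde T_\varepsilon-T_\varepsilon))|_{g_p}
\lesssim \varepsilon^{2+j}\cdot \varepsilon^{(\delta-2)-j}\cdot \varepsilon^{\delta'}
=\varepsilon^{\delta+\delta'},
\end{align*}
so after multiplying by $\varepsilon^{-2}$ we obtain the bound $\varepsilon^{\delta+\delta'-2}$, which tends to $0$ precisely because $\delta'>2-\delta$. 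This yields $\varepsilon^{-2}\tilde T_\varepsilon\stackrel{S}{\to}(T_p)_{p\in S}$.

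\textbf{Main subtlety.} The only nontrivial bookkeeping is the scaling calculation in (ii): one must keep straight that pulling back a $2$-form gains $\varepsilon^{2}$ in the $g_p$-norm and each derivative gains one more factor of $\varepsilon$, and one must verify that the pullback Levi-Civita connection agrees with $\nabla_{g_p}$ up to terms that decay by \eqref{eq asymp2} and do not spoil the estimate. Both are routine, so I do not anticipate any serious obstacle — the lemma is essentially a repackaging of the weighted estimate under the two relevant asymptotic regimes.
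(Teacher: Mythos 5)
Your argument is correct and follows essentially the same route as the paper's proof: reduce to estimating $\tilde T_\varepsilon-T_\varepsilon$ on the relevant compact set and convert the weighted $g_\varepsilon$-bound into an unweighted $g_0$- or $g_p$-bound via the rescaling $g_\varepsilon|_{U(p,\varepsilon^{1/2})}=\varepsilon^2\mathcal{I}_{p,\varepsilon}^*g_p$, which the paper uses as an exact identity (so your approximate handling of the Christoffel differences, while valid, is not needed). One cosmetic slip in (ii): to bound $\sigma_\varepsilon^{(\delta-2)-j}$ from above you need the lower bound $\sigma_\varepsilon\ge\varepsilon$ (the exponent is negative), not $\sigma_\varepsilon\lesssim\varepsilon$; since $\sigma_\varepsilon$ is comparable to $\varepsilon$ on $\mathcal{I}_{p,\varepsilon}^{-1}(K)$, the conclusion stands.
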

\begin{proof}
Let $K\in \T\setminus S$, $K_p\subset M_p$ be compact subsets. 
By the assumption, we have $\sup_K|\nabla_0^k((\pi^{-1})^*\tilde{T}_\varepsilon-T_\varepsilon)|_{g_0}\lesssim \varepsilon^{\delta'}$. Therefore, we have $T_\varepsilon\stackrel{\T\setminus S}{\rightarrow} T_0$. By $g_\varepsilon|_{U(p,\varepsilon^{1/2})}=\varepsilon^2\mathcal{I}_{p,\varepsilon}^*g_p$ and by the assumptions, we have 
\begin{align*}
\sup_{K_p}\left|\nabla_{g_p}^k(\varepsilon^{-2}(\mathcal{I}_{p,\varepsilon}^{-1})^*\tilde{T}_\varepsilon-T_p)\right|_{g_p}
&=\sup_{\mathcal{I}_{p,\varepsilon}^{-1}(K_p)}\varepsilon^k\left|\nabla_{g_\varepsilon}^k\{(\tilde{T}_\varepsilon-T_\varepsilon)\}\right|_{g_\varepsilon}\\
&\le\varepsilon^{\delta-2}\sup_{\mathcal{I}_{p,\varepsilon}^{-1}(K_p)}\sigma_\varepsilon^{-\delta+2+k}\left|\nabla_{g_\varepsilon}^k\{(\tilde{T}_\varepsilon-T_\varepsilon)\}\right|_{g_\varepsilon}\\
&\le \varepsilon^{\delta-2}\| \tilde{T}_\varepsilon-T_\varepsilon\|_{C^{k,\alpha}_{\delta-2,g_\varepsilon}}
\lesssim \varepsilon^{\delta+\delta'-2}.
\end{align*}
It implies $\varepsilon^{-2}\tilde{T}_\varepsilon\stackrel{S}{\rightarrow} (T_p)_{p\in S}$. 
\end{proof}

As a consequence, we obtain the next result. 
\begin{thm}
Let $X=X(\Lambda,\Gamma)$ and $\Omega,\pi,S,\omega_0,\Omega_0$ be as in Subsection \ref{subsec kummer}. For every $p\in S$, we assume that an ALE Calabi-Yau manifold $(M_p,\eta_p,\Theta_p,\varpi_p)$ asymptotic to $\C^2/\Gamma_p$ is given, where $\Gamma_p\subset \Gamma$ is the stabilizer of $p$. For any $0<\varepsilon \ll 1$, there is a K\"ahler form $\tilde{\omega}_\varepsilon\in \Omega^{1,1}(X)$ such that 
\begin{align*}
\tilde{\omega}_\varepsilon^2&=\frac{1}{2}\Omega\wedge\overline{\Omega},\quad
\tilde{\omega}_\varepsilon\stackrel{\T\setminus S}{\longrightarrow} \omega_0,\quad\varepsilon^{-2}\tilde{\omega}_\varepsilon\stackrel{S}{\to} (\eta_p)_{p\in S}
\end{align*}
\label{thm RFK and appro}
\end{thm}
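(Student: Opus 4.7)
The plan is to take the approximating K\"ahler form $\omega_\varepsilon$ from Subsection \ref{subsec appro}, perturb it by $\sqrt{-1}\ddb\varphi_\varepsilon$ with $\varphi_\varepsilon$ supplied by Theorem \ref{thm fix pt}, and then rescale by the constant $W_\varepsilon^{-1/2}$ to normalize the holomorphic volume. Concretely, I would set
\[
\tilde{\omega}_\varepsilon := W_\varepsilon^{-1/2}\bigl(\omega_\varepsilon + \sqrt{-1}\ddb\varphi_\varepsilon\bigr).
\]
Since $\omega_\varepsilon^2 = (e^{f_\varepsilon}/2)\Omega\wedge\bar{\Omega}$, equation \eqref{eq MA} gives $(\omega_\varepsilon + \sqrt{-1}\ddb\varphi_\varepsilon)^2 = (W_\varepsilon/2)\Omega\wedge\bar{\Omega}$, so $\tilde{\omega}_\varepsilon^2 = \tfrac{1}{2}\Omega\wedge\bar{\Omega}$ as required. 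Positivity of $\tilde{\omega}_\varepsilon$ for small $\varepsilon$ follows from positivity of $\omega_\varepsilon$ combined with the weighted $C^2$-smallness $\|\varphi_\varepsilon\|_{C^{k+2,\alpha}_{\delta,g_\varepsilon}}\lesssim \varepsilon^{3-\delta/2}$, applied pointwise in the $g_\varepsilon$-norm.

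For the convergence assertions I would apply Lemma \ref{lem conv 2-form} with $T_\varepsilon = \omega_\varepsilon$ and $\tilde{T}_\varepsilon = \omega_\varepsilon + \sqrt{-1}\ddb\varphi_\varepsilon$, noting that $T_\varepsilon|_{X\setminus \bigcup_p\overline{U(p,2\varepsilon^{1/2})}} = \pi^*\omega_0$ and $T_\varepsilon|_{U(p,\varepsilon^{1/2})} = \varepsilon^2\mathcal{I}_{p,\varepsilon}^*\eta_p$ by the very construction of $\omega_\varepsilon$. Theorem \ref{thm fix pt} gives, for any $\delta\in(-2,0)$,
\[
\bigl\|\tilde{T}_\varepsilon - T_\varepsilon\bigr\|_{C^{k,\alpha}_{\delta-2,g_\varepsilon}}
\le \bigl\|\sqrt{-1}\ddb\varphi_\varepsilon\bigr\|_{C^{k,\alpha}_{\delta-2,g_\varepsilon}}
\lesssim \varepsilon^{3-\delta/2},
\]
so the exponent $\delta' = 3-\delta/2$ in Lemma \ref{lem conv 2-form} satisfies $\delta'>0$ (yielding $\tilde{\omega}_\varepsilon\stackrel{\T\setminus S}{\to}\omega_0$ by part (i)) as well as $\delta'>2-\delta$, which reduces to $\delta>-2$ (yielding $\varepsilon^{-2}\tilde{\omega}_\varepsilon\stackrel{S}{\to}(\eta_p)_{p\in S}$ by part (ii)). The scalar factor $W_\varepsilon^{-1/2}$ is harmless: since $|W_\varepsilon-1|\lesssim\varepsilon^4$, both limits are preserved after multiplying by $W_\varepsilon^{-1/2}$.

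There is no real obstacle remaining: all analytic difficulty has been absorbed into Theorem \ref{thm fix pt} and its underlying weighted Schauder estimate (Theorem \ref{thm schauder}). The only substantive point to verify is that the admissible weight range $\delta\in(-2,0)$ supplied by Theorem \ref{thm schauder} is exactly the range in which $\varphi_\varepsilon$ is small enough both to remain a $C^0$-perturbation of $\omega_\varepsilon$ on compacts of $\T\setminus S$ \emph{and} to remain a controlled perturbation after rescaling by $\varepsilon^{-2}$ near each bubble $M_p$. The ALE side is the binding constraint, since the inequality $3-\delta/2>2-\delta$ degenerates precisely at $\delta=-2$. Once this bookkeeping is in place, the theorem follows directly.
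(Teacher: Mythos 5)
Your proposal is correct and follows essentially the same route as the paper: the paper also sets $\tilde{\omega}_\varepsilon = W_\varepsilon^{-1/2}(\omega_\varepsilon+\sqrt{-1}\ddb\varphi_\varepsilon)$ with $\varphi_\varepsilon$ from Theorem \ref{thm fix pt}, deduces $\|\tilde{\omega}_\varepsilon-\omega_\varepsilon\|_{C^{k,\alpha}_{\delta-2,g_\varepsilon}}\lesssim\varepsilon^{3-\delta/2}$ using $|W_\varepsilon-1|\lesssim\varepsilon^4$, and concludes via Lemma \ref{lem conv 2-form} from the positivity of $3-\delta/2$ and $1+\delta/2$. Your extra remarks on positivity of the perturbed form and on $\delta=-2$ being the degenerate endpoint are consistent with, and slightly more explicit than, the paper's one-line justification.
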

\begin{proof}
Fix $-2<\delta<0$. We obtain $\varphi_\varepsilon\in C^\infty(X)$ by Theorem \ref{thm fix pt} and put 
$\tilde{\omega}_\varepsilon:=W_\varepsilon^{-1/2}(\omega_\varepsilon+\sqrt{-1}\ddb\varphi_\varepsilon)$. Since $\| \varphi_\varepsilon\|_{C^{k+2,\alpha}_{\delta,g_\varepsilon}}\lesssim\varepsilon^{3-\delta/2}$ for all $k\ge 0$ and $|W_\varepsilon-1| \lesssim \varepsilon^4$, we have $\|\tilde{\omega}_\varepsilon-\omega_\varepsilon\|_{C^{k,\alpha}_{\delta-2,g_\varepsilon}}\lesssim\varepsilon^{3-\delta/2}$. Therefore, the assertion follows from Lemma \ref{lem conv 2-form}, since both of $3-\delta/2$ and $1+\delta/2$ are positive. 
\end{proof}

\subsection{The Levi-Civita connections}\label{subsec estimate on LC}
Now, we have the K\"ahler metric $g_\varepsilon$ of $\omega_\varepsilon$ and the Ricci-flat K\"ahler metric $\tilde{g}_\varepsilon$ of $\tilde{\omega}_\varepsilon$ by Theorem \ref{thm RFK and appro}. 
In the above sections, we often use the Levi-Civita connection of $g_\varepsilon$ to consider the weighted H\"older norms. However, in the following sections, we also consider the derivation by the Levi-Civita connection $\tilde{\nabla}$ of $\tilde{g}_\varepsilon$. Here, we consider some estimates on the higher derivatives by $\tilde{g}_\varepsilon$. 

Let $g,\tilde{g}$ be Riemannian metrics on $X$ and denote by $\nabla,\tilde{\nabla}$ their Levi-Civita connections, respectively. We put $\tilde{\nabla}=\nabla+A$ for some $A\in\Gamma(T^*X\otimes {\rm End}(TX))$. 
We also assume 
\begin{align*}
\frac{g}{2}\le\tilde{g}\le 2g,
\quad |\nabla^k\tilde{g}|_g\le s_k
\end{align*}
for some nonnegative valued functions $\{ s_k\}_{k=1}^\infty$. If we denote by $\Psi\colon T^*X\otimes T^*X\otimes T^*X\to T^*X\otimes T^*X\otimes T^*X$ the bundle map defined by 
\begin{align*}
\Psi(dx^i\otimes dx^j\otimes dx^k)
&=dx^i\otimes dx^j\otimes dx^k+dx^j\otimes dx^i\otimes dx^k\\
&\quad\quad -dx^k\otimes dx^i\otimes dx^j
\end{align*}
for a local coordinate $x^1,\ldots,x^4$, then we may write 
\begin{align*}
A=\tilde{g}^{-1}\Psi(\nabla \tilde{g}).
\end{align*}
Since $\Psi$ is parallel with respect to any Riemannian metric, then we have 
\begin{align*}
|\nabla^kA|_g\le \sum_{\substack {m_1+\cdots +m_l=k+1 \\ m_i>0}} C_{m_1,\ldots,m_l}s_{m_1}\cdots s_{m_l}
\end{align*}
for some positive constants $C_{m_1,\ldots,m_l}$. Moreover, we can see 
\begin{align*}
\tilde{\nabla}^k&=(\nabla+A)^k\\
&=\nabla^k+\sum_{\substack{m_1+\cdots +m_l=k-l-j \\ l\ge 1,\, m_i\ge 0}} C'_{j,m_1,\ldots,m_l}(\nabla^{m_l}A)(\nabla^{m_{l-1}}A)\cdots(\nabla^{m_1}A)\nabla^j
\end{align*}
for some constants $C'_{m_0,\ldots,m_l}$. 
Therefore, for any tensor $T$ on $X$, we have 
\begin{align}
|\tilde{\nabla}^kT|_{\tilde{g}}
&\le C'|\nabla^kT|_g
+\sum_{j=0}^{k-1}\sum_{\substack{m_1+\cdots +m_d=k-j\\ m_i>0}} C'_{j,m_1,\ldots,m_d}
s_{m_1}\cdots s_{m_d}|\nabla^jT|_g\label{ineq nabla k 1}
\end{align}
for some positive constants $C',C'_{j,m_1,\ldots,m_d}$. 

Now, we put $\tilde{g}=\tilde{g}_\varepsilon$ and $g=g_\varepsilon$. We have already shown that 
\begin{align*}
\tilde{\omega}_\varepsilon-\omega_\varepsilon&=(W_\varepsilon^{-1/2}-1)\omega_\varepsilon+\sqrt{-1}W_\varepsilon^{-1/2}\ddb\varphi_\varepsilon,\\
\nabla_{g_\varepsilon}^k\tilde{\omega}_\varepsilon
&=\sqrt{-1}W_\varepsilon^{-1/2}\nabla_{g_\varepsilon}^k\ddb\varphi_\varepsilon,\\
\|\varphi_\varepsilon\|_{C^{k+2,\alpha}_{\delta,g_\varepsilon}}&\lesssim\varepsilon^{3-\delta/2}.
\end{align*}
Here, the third estimate implies $|\nabla^k\ddb\varphi_\varepsilon|\lesssim  \varepsilon^{3-\delta/2}\sigma_\varepsilon^{\delta-k-2}$ for all $k$, then the first and the second equalities implies that $g_\varepsilon/2\le \tilde{g}_\varepsilon\le 2g_\varepsilon$ and $s_k\lesssim \varepsilon^{3-\delta/2}\sigma_\varepsilon^{\delta-k-2}$. Therefore, we obtain 
\begin{align*}
|\tilde{\nabla}^kT|_{\tilde{g}_\varepsilon}
&\lesssim |\nabla^kT|_{g_\varepsilon}
+\sum_{j=0}^{k-1}\sum_{d=1}^{k-j} \varepsilon^{3d-\delta d/2}\sigma_\varepsilon^{\delta d-2d-k+j}|\nabla^jT|_{g_\varepsilon}.
\end{align*}
Recall $-2<\delta<0$. Then we have $\sigma_\varepsilon^{\delta d-2d}\le \varepsilon^{\delta d-2d}$ and 
$\varepsilon^{3d-\delta d/2}\sigma^{\delta d-2d} \le \varepsilon^{1+\delta /2}$. Thus, we can see 
\begin{align}
\sigma_\varepsilon^{-\delta'+k}|\tilde{\nabla}^kT|_{\tilde{g}_\varepsilon}
&\lesssim \sigma_\varepsilon^{-\delta'+k}|\nabla^kT|_{g_\varepsilon}
+\sum_{j=0}^{k-1}\varepsilon^{1+\delta /2}\sigma_\varepsilon^{-\delta'+j}|\nabla^jT|_{g_\varepsilon}\label{ineq nabla k}
\end{align}
for any $\delta'\in\R$. Consequently, we obtain the next proposition. 
\begin{prop}\label{prop Ck comparison}
There is a constant $C_k,\varepsilon_0>0$ independent of $\varepsilon,\delta,\delta'$ such that 
\begin{align*}
\| T\|_{C^k_{\delta',\tilde{g}_\varepsilon}}
&\le C_k\| T\|_{C^k_{\delta',g_\varepsilon}}
\end{align*}
for all $T\in C^k(X)$, $\delta\in(-2,0)$, $\delta'\in \R$, and $\varepsilon\in (0,\varepsilon_0]$. 
\end{prop}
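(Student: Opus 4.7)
The plan is to observe that the proposition is essentially a consequence of the pointwise inequality \eqref{ineq nabla k} already derived in the preceding discussion, so the main work is bookkeeping: multiplying by the appropriate weight $\sigma_\varepsilon^{-\delta'+j}$, summing over $j=0,\ldots,k$, and checking that the resulting constant is uniform in $\varepsilon$, $\delta$, and $\delta'$.

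First I would apply \eqref{ineq nabla k} at each order $j=0,\ldots,k$ with the fixed weight exponent $\delta'$, which gives
\begin{align*}
\sigma_\varepsilon^{-\delta'+j}|\tilde{\nabla}^jT|_{\tilde{g}_\varepsilon}
&\lesssim \sigma_\varepsilon^{-\delta'+j}|\nabla^jT|_{g_\varepsilon}
+\sum_{l=0}^{j-1}\varepsilon^{1+\delta/2}\sigma_\varepsilon^{-\delta'+l}|\nabla^lT|_{g_\varepsilon}.
\end{align*}
Taking supremum over $X$ on both sides and summing the results for $j=0,\ldots,k$, the first term on the right-hand side contributes exactly $\|T\|_{C^k_{\delta',g_\varepsilon}}$, while the double sum is bounded by a combinatorial constant times $\varepsilon^{1+\delta/2}\|T\|_{C^{k-1}_{\delta',g_\varepsilon}}$.

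Next I would observe that the factor $\varepsilon^{1+\delta/2}$ is uniformly bounded in $\delta$: since $\delta\in(-2,0)$ we have $1+\delta/2\in(0,1)$, so for any $\varepsilon_0\le 1$ and $\varepsilon\in(0,\varepsilon_0]$ one has $\varepsilon^{1+\delta/2}\le 1$ regardless of the precise value of $\delta$. Combined with the obvious inequality $\|T\|_{C^{k-1}_{\delta',g_\varepsilon}}\le\|T\|_{C^k_{\delta',g_\varepsilon}}$, this yields
\begin{align*}
\|T\|_{C^k_{\delta',\tilde{g}_\varepsilon}}
&\le C_k\|T\|_{C^k_{\delta',g_\varepsilon}},
\end{align*}
with $C_k$ depending only on the combinatorial constants in \eqref{ineq nabla k 1} and \eqref{ineq nabla k}, hence independent of $\varepsilon,\delta,\delta'$.

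The only subtle point is checking that $\delta'$ genuinely drops out: it enters each pointwise estimate only through the homogeneous weight $\sigma_\varepsilon^{-\delta'+j}$, and \eqref{ineq nabla k} was derived precisely by absorbing this weight uniformly in $\delta'$ using $\sigma_\varepsilon^{\delta d-2d}\le\varepsilon^{\delta d-2d}$ (which is independent of $\delta'$). I do not anticipate a genuine obstacle here; the only vigilance required is to confirm, by inspecting the derivation of \eqref{ineq nabla k}, that no hidden $\delta'$- or $\delta$-dependent factor was dropped into the symbol $\lesssim$. Since the combinatorial constants $C'$ and $C'_{j,m_1,\ldots,m_d}$ appearing there are purely algebraic and depend only on $k$ and the dimension, the conclusion follows.
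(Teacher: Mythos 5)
Your proposal is correct and follows essentially the same route as the paper: the paper derives the pointwise inequality \eqref{ineq nabla k} and then states the proposition as an immediate consequence, exactly as you do by weighting, summing over derivative orders, and using $\varepsilon^{1+\delta/2}\le 1$ for $\delta\in(-2,0)$. Your explicit bookkeeping of the uniformity in $\delta$ and $\delta'$ simply spells out what the paper leaves implicit.
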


\section{Anti-self-dual $2$-forms on the ALE spaces}\label{sec ASD ALE}
On an oriented Riemannian manifold $(M,g)$ of dimension $4$, $2$-form $\alpha\in\Omega^2(M)$ is called {\it self-dual} if $*\alpha=\alpha$ and 
{\it anti-self-dual} if $*\alpha=-\alpha$. It is easy to see that self-dual or anti-self-dual $2$-forms are harmonic iff it is closed. 

In this section, let $(M,\eta,\Theta,\varpi)$ be an ALE Calabi-Yau manifold asymptotic to $\C^2/\Gamma$. 

\subsection{The Chern forms of ASD connections}\label{subsec instanton ALE}
On every ALE space, Kronheimer and Nakajima obtained the following result in  \cite{KN1990instanton}. 
\begin{thm}[{\cite{KN1990instanton}}]\label{thm ASD on ALE}
Let $N_\Gamma$ be the number of the conjugacy classes of unitary representations of $\Gamma$. 
For $i=1,\ldots,N_\Gamma$, there are hermitian vector bundles $\mathcal{R}_i$ with hermitian connections $A_i$ such that the following holds. 
\begin{itemize}
\setlength{\parskip}{0cm}
\setlength{\itemsep}{0cm}
 \item[$({\rm i})$] For the curvature form $F^{A_i}\in \Omega^2({\rm End}(\mathcal{R}_i))$ we have $\int_M|F^{A_i}|^2\eta^2<\infty$. 
 \item[$({\rm ii})$] The first Chern classes $c_1(\mathcal{R}_i)$ form a basis for $H^2(M)$. 
\end{itemize}
\end{thm}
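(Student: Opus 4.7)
The plan is to construct $(\mathcal{R}_i,A_i)$ as tautological associated bundles arising from the hyper-K\"ahler quotient description of $M$ recalled in Proposition \ref{prop hk quotient}. In Kronheimer's construction, $\hat{M}$ and $G$ are built from hermitian multiplicity spaces $\bfV_i$ indexed by the irreducible representations of $\Gamma$: we take $\rho_1,\ldots,\rho_{N_\Gamma}$ to be the non-trivial irreducibles and $\rho_0$ the trivial one, and $G$ acts through factors of $U(\bfV_i)$. For each $i=1,\ldots,N_\Gamma$, I would define the tautological hermitian vector bundle
\[
\mathcal{R}_i := \bigl(\mu_1^{-1}(\zeta)\cap\mu_\C^{-1}(0)\bigr)\times_G \bfV_i,
\]
equipped with the hermitian connection $A_i$ induced by the horizontal distribution of the principal $G$-bundle $\mu_1^{-1}(\zeta)\cap\mu_\C^{-1}(0)\to M$.

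The ASD property of $A_i$ is a standard feature of tautological connections on hyper-K\"ahler quotients. The $\mathfrak{g}$-valued vertical component of the curvature is controlled by the differentials of $\mu_1$ and $\mu_\C$; since $\mu_\C\equiv 0$ on the level set, the $(0,2)$- and $(2,0)$-parts of $F^{A_i}$ vanish, while $\mu_1\equiv\zeta$ together with K\"ahler reduction forces the $(1,1)$-part to be primitive. Hence $*F^{A_i}=-F^{A_i}$, and in particular $c_1(A_i):=\tfrac{\sqrt{-1}}{2\pi}\mathrm{tr}(F^{A_i})$ is a harmonic anti-self-dual $2$-form. For property $({\rm i})$, the $L^2$-bound would follow from the ALE asymptotics in Definition \ref{def ALE}: under $\Phi$ the hyper-K\"ahler data approach those of the flat orbifold $\C^2/\Gamma$ with error $O(r^{-4})$, so the tautological bundle becomes asymptotically flat and a direct computation yields $|F^{A_i}|_\eta = O(r^{-4})$. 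Since $\eta^2$ asymptotes to the Euclidean volume, the integral converges because $\int_{r\ge R} r^{-8}\cdot r^3\,dr <\infty$.

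For property $({\rm ii})$, the McKay correspondence identifies the non-trivial irreducible representations $\rho_1,\ldots,\rho_{N_\Gamma}$ with the $N_\Gamma$ irreducible components $E_1,\ldots,E_{N_\Gamma}$ of the exceptional divisor of $\varpi\colon M\to\C^2/\Gamma$, whose fundamental classes form a basis of $H_2(M,\Z)$. The main obstacle—and the substantive computational step—is to verify that the pairing matrix $\bigl(c_1(\mathcal{R}_i)\cdot [E_j]\bigr)_{i,j}$ is non-degenerate. This I would establish within the quiver-variety framework: each $[E_j]$ is represented by a rational curve arising from a specific one-parameter family in $\mu_1^{-1}(\zeta)\cap\mu_\C^{-1}(0)/G$, and restricting $\mathcal{R}_i$ to this curve yields a line bundle whose degree is computable from the McKay quiver structure, giving (after appropriate relabeling) the identity matrix. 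Once this non-degeneracy is verified, Poincar\'e duality implies that $\{c_1(\mathcal{R}_i)\}_{i=1,\ldots,N_\Gamma}$ is a basis of $H^2(M)$.
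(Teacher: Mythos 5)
The paper does not prove this theorem itself --- it cites \cite{KN1990instanton} and merely reviews the construction, which is exactly the one you describe: the tautological hermitian bundles $\mathcal{R}_i=P_\zeta\times_{p_i}R_i$ associated to the principal $G$-bundle $P_\zeta=\mu_1^{-1}(\zeta)\cap\mu_\C^{-1}(0)\to M$, with connections induced by the orthogonal horizontal distribution. Your outline (anti-self-duality via the type-$(1,1)$ property of curvature on hyper-K\"ahler quotients, $L^2$-finiteness from the $O(r^{-4})$ curvature decay, and the basis property via duality with the exceptional curves under the McKay correspondence) is essentially the same route as the cited Kronheimer--Nakajima argument, with the substantive verifications deferred in the same way the paper defers them to the reference.
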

We review of the construction of $(\mathcal{R}_i,A_i)$ briefly. 
Let $\hat{M}$, $G$, $\mu$ be as in Proposition  \ref{prop hk quotient}. We have already obtained $M$ as the quotient space $\mu_1^{-1}(\zeta)\cap\mu_\C^{-1}(0)/G\cong \mu_\C^{-1}(0)_\zeta/G_\C$ for some $\zeta\in\mathfrak{g}^*$. 
Here, $P_\zeta:=\mu_1^{-1}(\zeta)\cap\mu_\C^{-1}(0)\to M$ is a principal $G$-bundle. By the induced Riemannian metric on $P_\zeta$, we can take the horizontal distribution of $TP_\zeta$ as the orthogonal complement of the $G$-orbits. It defines a $G$-connection $A_\zeta$ on $P_\zeta$. 

Let $R_i$ be the irreducible unitary representations of $\Gamma$, where $i=0,\ldots N_\Gamma$. We suppose that $R_0\cong\C$ is the trivial representation. 
Then $G$ is isomorphic to $\prod_{i\neq 0}U(R_i)$. Denote by $p_i\colon G\to U(R_i)$ the $i$-th projection, then the associate hermitian bundles $\mathcal{R}_i:=P_\zeta\times_{p_i}R_i$ and its hermitian connections $A_i$ are induced by $A_\zeta$ for every $i=1,\ldots,N_\Gamma$.

Let ${\bf H}$ be the $\C^\times$-action defined in Proposition \ref{prop hk quotient}. If we consider the subgroup $S^1:=\{ \lambda\in\C^\times|\, |\lambda|=1\}$, then the $S^1$-actions is commutative with $G$-actions and preserving the metric on $\hat{M}$, hence $A_\zeta$ is also preserved by the $S^1$-action.

\begin{prop}\label{prop instanton}
Put 
\begin{align*}
c_1(A_i):=\frac{\sqrt{-1}}{2\pi}{\rm tr}(F^{A_i})\in \Omega^{1,1}(M).
\end{align*}
\begin{itemize}
\setlength{\parskip}{0cm}
\setlength{\itemsep}{0cm}
 \item[$({\rm i})$] $c_1(A_i)$ is an anti-self-dual $2$-form and $\|c_1(A_i)\|_{L^2}<\infty$. 
 \item[$({\rm ii})$] $c_1(A_1),\ldots,c_1(A_{N_\Gamma})$ are linearly independent.
 \item[$({\rm iii})$] ${\bf H}_\lambda^*c_1(A_i)=c_1(A_i)$ for all $i$ and $\lambda\in S^1$. 
  \item[$({\rm iv})$] There is $\gamma_i\in\Omega^1(M')$ on $M'=M\setminus\varpi^{-1}(0)$ such that ${\bf H}_{\lambda}^*\gamma_i=\gamma_i$, $c_1(A_i)=d\gamma_i$ and $|\nabla_0^k\gamma_i|_{g_0}=O(r^{-3-k})$ for all $i$ and $k\ge 0$. Here, $g_0$ is the standard metric on $ (\C^2\setminus\{ 0\})/\Gamma$, $\nabla_0$ is its Levi-Civita connection, and we identify $M'$ with $(\C^2\setminus\{ 0\})/\Gamma$ by $\varpi$. 
\end{itemize}
\end{prop}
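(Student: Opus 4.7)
The plan is to extract each property directly from the Kronheimer--Nakajima construction reviewed just before the statement. Parts (i) and (ii) are essentially immediate. By construction the connection $A_\zeta$ on the principal $G$-bundle $P_\zeta$ arises from the hyper-K\"ahler moment-map picture, so its curvature is anti-self-dual; this property is inherited by every associated connection $A_i$. Taking the trace preserves anti-self-duality, hence $c_1(A_i)=\frac{\sqrt{-1}}{2\pi}\operatorname{tr}(F^{A_i})$ is ASD, and the pointwise bound $|c_1(A_i)|_\eta\le C|F^{A_i}|_\eta$ together with Theorem \ref{thm ASD on ALE}(i) gives $c_1(A_i)\in L^2$. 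For (ii) I would observe that $c_1(A_i)$ is a closed representative of $c_1(\mathcal{R}_i)\in H^2(M;\R)$; since the cohomology classes form a basis of $H^2(M;\R)$ by Theorem \ref{thm ASD on ALE}(ii), the forms themselves are a fortiori linearly independent in $\Omega^2(M)$.

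For (iii), the $S^1\subset\C^\times$ action on $\hat M$ is linear, commutes with $G$, preserves the complex structure $I$, and---since $|\lambda|=1$---preserves the quaternionic inner product. Therefore it acts on $P_\zeta=\mu_1^{-1}(\zeta)\cap\mu_\C^{-1}(0)$ by isometries commuting with $G$, so it preserves the orthogonal complement of the $G$-orbits, i.e.\ the horizontal distribution defining $A_\zeta$. Consequently $A_\zeta$, every associated $A_i$, its curvature, and $c_1(A_i)$ are all $S^1$-invariant.

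For (iv), I would construct $\gamma_i$ by integrating $c_1(A_i)$ along the $\R_+$-flow of ${\bf H}$. Let $V$ be the generator of the $\R_+$-subaction and set $\phi_s={\bf H}_{e^s}$. Under $\varpi\colon M'\cong (\C^2\setminus\{0\})/\Gamma$, $V$ becomes the Euler field $r\partial_r$ and $\phi_s^*g_0=e^{2s}g_0$, so $\phi_s^*$ commutes with $\nabla_0$. Using the decay $|\nabla_0^k c_1(A_i)|_{g_0}=O(r^{-4-k})$ (see the discussion of the main obstacle below), Leibniz gives $|\nabla_0^k\iota_V c_1(A_i)|_{g_0}=O(r^{-3-k})$, and the scaling $\phi_s^*T=e^{ks}T\circ\phi_s$ on a $(0,k)$-tensor yields
\begin{align*}
|\nabla_0^k\phi_s^*(\iota_V c_1(A_i))|_{g_0}(x)=e^{(k+1)s}O((e^sr(x))^{-3-k})=e^{-2s}O(r(x)^{-3-k}).
\end{align*}
Hence
\begin{align*}
\gamma_i:=-\int_0^\infty \phi_s^*(\iota_V c_1(A_i))\,ds
\end{align*}
converges absolutely in $C^k$ on every set $\{r\ge R\}$ with the required $O(r^{-3-k})$ bound. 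Since $c_1(A_i)$ is closed, Cartan's formula gives $\frac{d}{ds}\phi_s^*c_1(A_i)=d\,\phi_s^*\iota_V c_1(A_i)$, and the decay forces $\phi_s^*c_1(A_i)\to 0$ as $s\to\infty$, so $d\gamma_i=c_1(A_i)$. Finally, $V$ commutes with the $S^1$-subaction, so (iii) transfers through the integral to give ${\bf H}_\lambda^*\gamma_i=\gamma_i$ for $\lambda\in S^1$. To extend $\gamma_i$ over the compact part of $M'$ (which is not the asymptotic end), one patches using a partition of unity with any smooth primitive on a compact set; the decaying description near infinity is then preserved.

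The main obstacle is establishing the uniform decay $|\nabla_0^k c_1(A_i)|_{g_0}=O(r^{-4-k})$ on the end. This is a regularity statement for $L^2$ ASD connections at infinity: one passes $A_i$ to a Coulomb gauge on an annular neighborhood of infinity via Uhlenbeck's gauge-fixing theorem, whereupon the ASD equation becomes an elliptic system whose solutions decay to a flat model at the rate dictated by the indicial roots on $S^3/\Gamma_p$, and bootstrapping in that gauge produces the derivative bounds on $F^{A_i}$. Alternatively, one can read the decay off the Kronheimer--Nakajima hyper-K\"ahler-quotient description of $A_\zeta$ together with the asymptotic flatness of the quotient metric. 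Once this decay is accepted, the radial-integration construction above is essentially routine.
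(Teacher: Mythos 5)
Your parts (i)--(iii) coincide with the paper's proof: the paper also cites the anti-self-duality of $A_\zeta$ (via \cite{GochoNakajima}), the $L^2$-bound from Theorem \ref{thm ASD on ALE}(i), linear independence from Theorem \ref{thm ASD on ALE}(ii), and $S^1$-invariance of the horizontal distribution. The divergence is in (iv), and there your argument has a real hole where the paper does the actual work. You build $\gamma_i$ by radially integrating $c_1(A_i)$ along the $\R_+$-flow of $\mathbf H$; granting the input $|\nabla_0^k c_1(A_i)|_{g_0}=O(r^{-4-k})$, your homotopy-formula computation, the convergence of the integral on all of $M'$, and the $S^1$-averaging-free invariance are all fine (indeed slightly cleaner than the paper on that last point, since the paper must average $\gamma_i$ over $S^1$ a posteriori). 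But that input \emph{is} the analytic content of (iv), and you only gesture at it. The paper obtains it by conformally compactifying $(M,g)$ to a compact orbifold $\overline M=M\cup\{\infty\}$ with a smooth metric (smoothness via harmonic coordinates and the elliptic regularity argument of \cite{TV2005adv}), invoking conformal invariance of the ASD equation and finiteness of the action, and then applying Uhlenbeck's removable singularity theorem to extend $A_i$ smoothly across $\infty$; a frame with $\nabla^{A_i}E_\alpha|_\infty=0$ then yields a connection form $\hat\gamma_i$ with $|\nabla_0^k\hat\gamma_i|=O(r^{-3-k})$, and $\gamma_i=\tfrac{\sqrt{-1}}{2\pi}\operatorname{tr}\hat\gamma_i$ is already the desired primitive (the trace kills the quadratic term in $F^{A_i}=d\hat\gamma_i+[\hat\gamma_i,\hat\gamma_i]$). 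Note that in the paper the decay of $c_1(A_i)$ is a \emph{consequence} of this gauge-theoretic argument, not a prior fact one can quote; so your plan effectively defers the whole proposition to a step you would still have to carry out by essentially the paper's method (or a Coulomb-gauge bootstrap of comparable length). Two smaller points: your partition-of-unity remark for extending $\gamma_i$ over the compact part is both unnecessary (your integral already converges on all of $M'$, since every point flows to the end) and, as stated, wrong --- gluing two primitives by a cutoff destroys $d\gamma_i=c_1(A_i)$ by a term $d\chi\wedge(\alpha-\beta)$ unless you first correct $\alpha-\beta$ by an exact form; and on $\R^4/\Gamma$ with $\Gamma$ nontrivial the flat model of $A_i$ at infinity has nontrivial holonomy, so the ``decay to a flat model'' in your Coulomb-gauge sketch must be taken in a $\Gamma$-equivariant frame pulled back from $\C^2\setminus\{0\}$, exactly as the paper does.
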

\begin{proof}
By \cite{GochoNakajima}, $A_\zeta$ is an anti-self-dual connection, so are $A_i$. $A_i$ is $L^2$-finite by $({\rm i})$ of Theorem \ref{thm ASD on ALE}. The linear independence of $c_1(A_1),\ldots,c_1(A_{N_\Gamma})$ follows from $({\rm ii})$ of Theorem \ref{thm ASD on ALE}. 
Since $A_\zeta$ is preserved by the $S^1$-action we have ${\bf H}_\lambda^*c_1(A_i)=c_1(A_i)$.

Next, we consider the conformally compactification $(\overline{M}=M\cup\{\infty\},g'=\phi^2 g)$, where $\varphi$ is a positive valued function of $r$ such that $\phi=1/r^2$ on some neighborhood of $\infty$. Then $\overline{M}$ is a compact orbifold and $g'$ is $C^{3,\alpha}$-metric for $0<\alpha<1$. On the neighborhood of $\infty$, we fix a harmonic coordinate of $g'$. Then by the elliptic regularity theory discussed in the proof of \cite[Theorem 6.4]{TV2005adv}, $g'$ is a smooth orbifold metric on $\overline{M}$. Now, $A_i$ is still anti-self-dual and has finite action with respect to $g'$. Then by the Removable Singularity Theorem for Yang-Mills connections \cite{Uhlenbeck1982}, $A_i$ can be extended to the smooth connection at $\infty$. Take an orthonormal frame $E_1,\ldots,E_{\dim R_i}$ of $\mathcal{R}_i$ around $\infty$ such that $\nabla^{A_i}E_\alpha|_\infty=0$. Here, we can take the frame globally as the pullback by $\C^2\setminus\{ 0\}\to \C^2\setminus\{ 0\}/\Gamma$.  Then the connection form $\hat{\gamma}_i\in\Omega^1(\C^2\setminus\{ 0\})\otimes \mathfrak{u}(R_i)$ of $A_i$ satisfies $|\nabla_0^k(\hat{\gamma}_i)|=O(r^{-3-k})$ for all $k\ge 0$. 
Now, we have $F^{A_i}=d\hat{\gamma}_i+[\hat{\gamma}_i,\hat{\gamma}_i]$ with respect to the above trivialization. If we put $\gamma_i:=\frac{\sqrt{-1}}{2\pi}{\rm tr}\hat{\gamma}_i$, then it descends to a $1$-form on $M'$ and we have 
$c_1(A_i)=d\gamma_i$. By the asymptotic behavior  for $\hat{\gamma}_i$, we also have 
$|\nabla_0^k(\gamma_i)|=O(r^{-3-k})$. 
Here, $\gamma_i$ is not invariant under the $S^1$-action, however, we can replace it by 
$\frac{1}{2\pi}\int_0^{2\pi}({\bf H}_{e^{\sqrt{-1}t}}^*\gamma_i)dt$, we may suppose $\gamma_i$ is $S^1$-invariant.
\end{proof}

Put $\V=\C^2\setminus\{ 0\}/\Gamma$ and identify with $M'$ by $\varpi$. The $S^1$-action on $\V$ identified with ${\bf H}$ is the multiplication of the scalar. 
\begin{prop}
Let $\gamma\in \Omega^1(\V)$ be $S^1$-invariant and $d\gamma\in\Omega^{1,1}(\V)$. We suppose $|\nabla_0^k\gamma|_{g_0}=O(r^{-3-k})$. Then there is an $S^1$-invariant $\psi=\psi^\gamma\in C^\infty(\V)$ such that $d\gamma=\sqrt{-1}\ddb \psi$ and 
$|\nabla_0^k\psi|_{g_0}=O(r^{-2-k})$ for all $k\ge 0$. Here, $I$ is the standard complex structure on $\V$. 
\label{prop instanton potential}
\end{prop}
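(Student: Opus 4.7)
The strategy reduces the problem to solving a $\bar\partial$-equation. Since $\gamma$ is real, write $\gamma = \alpha + \bar\alpha$ with $\alpha := \gamma^{1,0}$. The $(2,0)$-component of $d\gamma$ is $\partial\alpha$, so the hypothesis $d\gamma \in \Omega^{1,1}(\V)$ forces $\partial\alpha = 0$, and conjugating gives $\bar\partial\bar\alpha = 0$. If I can produce an $S^1$-invariant $u \in C^\infty(\V,\C)$ with $\bar\partial u = \bar\alpha$ and $|\nabla_0^k u|_{g_0} = O(r^{-2-k})$, then conjugation gives $\partial\bar u = \alpha$, whence
\begin{align*}
\gamma \;=\; \partial\bar u + \bar\partial u \;=\; d({\rm Re}\,u) + d^c({\rm Im}\,u),
\qquad d^c := i(\bar\partial-\partial).
\end{align*}
Thus $d\gamma = d d^c({\rm Im}\,u) = 2\sqrt{-1}\,\partial\bar\partial({\rm Im}\,u)$, and setting $\psi := 2\,{\rm Im}\,u$ yields $\sqrt{-1}\,\partial\bar\partial\psi = d\gamma$, with the required $S^1$-invariance and decay inherited from those of $u$.

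Existence of \emph{some} solution $u_0$ rests on the cohomological vanishing $H^{0,1}_{\bar\partial}(\V)^{S^1} = 0$. Since $\Gamma$ is finite, this equals the $S^1$-invariant part of $H^1(\C^2\setminus\{0\},\mathcal{O})^\Gamma$. Using the Stein cover of $\C^2\setminus\{0\}$ by $\{z\ne 0\}$ and $\{w\ne 0\}$, a \v{C}ech computation identifies $H^1(\C^2\setminus\{0\},\mathcal{O})$ with $\bigoplus_{m,n<0}\C\cdot z^m w^n$. Under the diagonal $S^1$-action the monomial $z^m w^n$ has weight $m+n$, which is strictly negative for $m,n<0$, so no nonzero $S^1$-invariant class exists. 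This produces $u_0$, and averaging over $S^1$ (which commutes with $\bar\partial$ and fixes $\bar\alpha$) yields an $S^1$-invariant solution.

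The substantive step is the decay. Two solutions of $\bar\partial u = \bar\alpha$ differ by a holomorphic function on $\V$, which by Hartogs extends across the origin to a $\Gamma$-invariant entire holomorphic function on $\C^2$; so the choice of $u$ is unique only modulo a $\Gamma$-invariant polynomial. To select the unique representative decaying like $r^{-2}$ at infinity I would invoke the weighted H\"older Fredholm theory of $\bar\partial$ on asymptotically conical manifolds (Lockhart--McOwen type): the indicial roots at the cone end of $\V$ correspond to $\Gamma$-invariant homogeneous holomorphic monomials on $\C^2$ and therefore lie in $\Z_{\ge 0}$, so $\delta = -2$ is non-indicial and strictly below all roots. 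Consequently, restricted to $S^1$-invariant sections, $\bar\partial \colon C^{k+1,\alpha}_\delta(\V,\C) \to C^{k,\alpha}_{\delta-1}(\V,\Lambda^{0,1})$ has trivial kernel (no holomorphic function on $\V$ decays at infinity) and is surjective onto the closed-form subspace at $\delta = -2$. Applied to $\bar\alpha \in C^{k,\alpha}_{-3}$ this produces the desired $u$ with $|\nabla_0^k u|_{g_0} = O(r^{-2-k})$.

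The principal obstacle is this last analytic step, namely verifying that the weight $\delta = -2$ is admissible for $\bar\partial$ on $\V$ and that the resulting solution can be taken $S^1$-invariant (the latter follows by averaging over the compact group, which preserves the weighted norms). A more concrete cross-check is a direct Bochner--Martinelli estimate: multiplying $\bar\alpha$ by a smooth cut-off $\chi$ supported away from the origin and convolving with the standard $\bar\partial$-kernel on $\C^2$ yields a candidate $u_1$; splitting the convolution at $|y| = |x|/2$ immediately gives $u_1(x) = O(|x|^{-2})$, and analogously for all derivatives, in both regions. The compactly supported error coming from $\bar\partial\chi\wedge\bar\alpha$ is absorbed by the cohomological step and does not affect the decay at infinity.
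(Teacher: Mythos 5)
Your reduction to the $\bar\partial$-equation is clean and correct: writing $\gamma=\alpha+\bar\alpha$, the $(1,1)$-hypothesis gives $\bar\partial\bar\alpha=0$, and a solution of $\bar\partial u=\bar\alpha$ with the right invariance and decay yields $\psi=2\,{\rm Im}\,u$. The cohomological step ($H^{0,1}(\V)^{S^1}=0$ via the \v{C}ech computation and the weight argument, plus $S^1$-averaging) is also sound. This is a genuinely different route from the paper, which never passes through Dolbeault cohomology or elliptic theory: there the cone is treated as $S^3/\Gamma\times\R_+$ with its Sasakian structure, $\gamma$ is decomposed as $f\,dr+h\sigma+\tau$, the $(1,1)$-condition becomes a radial ODE that is integrated explicitly from $r$ to $\infty$, giving $\gamma=d\hat f+d\hat h\circ I$ with $\hat h=\int_r^\infty h(t,\cdot)t^{-1}dt$ and $\psi=-2\hat h$; all the estimates $|\nabla_0^k\psi|=O(r^{-2-k})$ then follow by direct differentiation of this integral formula.

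The genuine gap in your version is exactly the step you flag as ``the principal obstacle'': the existence of a solution with decay $O(r^{-2})$, which is the entire analytic content of the proposition, is asserted rather than proved, and the machinery you invoke does not apply as stated. Lockhart--McOwen theory is a theory for \emph{elliptic} operators, while $\bar\partial\colon C^\infty(\V,\C)\to\Omega^{0,1}(\V)$ is overdetermined; to use weighted Fredholm theory you must pass to the Dolbeault Laplacian or the full complex, and for the scalar Laplacian on the cone over $S^3/\Gamma$ the weight $\delta=-2$ \emph{is} an indicial root (the $\Gamma$- and $S^1$-invariant harmonic function $r^{-2}$), contrary to your claim of non-indiciality; solving $\Delta u=\bar\partial^*\bar\alpha=O(r^{-4})$ at this critical weight generically produces $r^{-2}\log r$ terms, and ruling these out requires precisely the kind of structural argument you have not supplied. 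Your statement that $\bar\partial$ is ``surjective onto the closed-form subspace at $\delta=-2$'' is the thing to be proved, not a citation. The Bochner--Martinelli cross-check has the same problem in a different guise: since $\chi\bar\alpha$ is not $\bar\partial$-closed, the Koppelman formula leaves an error $K(\bar\partial\chi\wedge\bar\alpha)$ which is a globally defined, non-compactly-supported $(0,1)$-form that cannot simply be ``absorbed by the cohomological step'' without further estimates. The outline is repairable, but as written the decay estimate is not established; the paper's explicit radial integration sidesteps all of this.
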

\begin{proof}
Suppose the assertion has been proved for $\Gamma=\{ 1\}$. For general $\Gamma$, we can regard $\gamma$ as a $\Gamma$-invariant $1$-form on $\C^2\setminus\{ 0\}$, then we have $S^1$-invariant $\psi$ with $F=d\psi\circ I$ and 
$|\nabla_0^k\psi|_{g_0}=O(r^{-2-k})$ for all $k\ge 0$. Since $\Gamma$ is finite, we can replace $\psi$ by $\frac{1}{\#\Gamma}\sum_{\gamma\in\Gamma}\gamma^*\psi$, which is $\Gamma$-invariant. Consequently, it suffices to show the case of $\Gamma=\{ 1\}$. 

Now, we reformulate the assumptions on $\gamma$ in terms of Sasakian geometry.
Denote by $\xi\in \mathcal{X}(S^3)$ the vector field defined by $\xi_p:=\frac{d}{dt}e^{\sqrt{-1}t}p$ and $\sigma:=g_{S^3}(\xi,\colon)\in\Omega^1(S^3)$. Here, $g_{S^3}$ is the standard metric on the unit sphere. Then $(S^3,g_{S^3},\sigma,\xi)$ is a Sasakian manifold such that $\xi$ is the Reeb vector field and $\sigma$ is the contact form. We define a distribution $D$ on $S^3$ by $D:={\rm Ker}(\sigma)\subset TS^3$. We identify $\V$ with $S^3\times \R_+$ by the polar coordinate, then we have the decomposition $T_{(r,p)}^*\V=\R dr\oplus \R\sigma_p\oplus D^*_p$, where $D^*_p=\{ \alpha\in T^*_pS^3|\, \alpha(\xi)=0\}$. We put $\gamma=fdr+h\sigma+\tau$ for some $f,h\in C^\infty(\V)$ and $\tau=\tau(r,\cdot)\in \Gamma(D^*)$. By the $S^1$-invariance of $\gamma$, we have $\xi(f)=\xi(h)=\iota_\xi d\tau=0$. $d\gamma$ is of type $(1,1)$ with respect to $I$ iff 
\begin{align}
\frac{\del\tau}{\del r}
=d_Bf-\frac{d_Bh\circ I}{r},\quad d_B\tau\circ I=d_B\tau.\label{eq type 1,1}
\end{align}
Here, $d_B=d-\sigma\wedge \iota_\xi d$. Note that $d_Bf,d_Bh\in\Gamma(D^*)$, $d_B\tau\in\Gamma(\Lambda^2D^*)$ and $I$ preserves $D^*$. By $|\gamma|_{g_0}=O(r^{-3})$ and $g_0=dr^2+r^2g_{S^3}$, we have $|f|=O(r^{-3})$, $|h/r|=O(r^{-3})$ and $|\tau|_{g_{S^3}}=O(r^{-2})$. 
By the first equality of \eqref{eq type 1,1}, we have 
\begin{align*}
\tau=d_B\hat{f}+d_B\hat{h}\circ I,
\end{align*}
where 
\begin{align*}
\hat{f}:=-\int_r^\infty f(t,\cdot)dt,\quad
\hat{h}:=\int_r^\infty \frac{h(t,\cdot)}{t}dt.
\end{align*}
We can check that $\hat{f},\hat{h}$ are $S^1$-invariant. 
Then the second equality of \eqref{eq type 1,1} is also satisfied since we have $d_B^2\hat{f}=0$ and $d\sigma,d_B(d_B\hat{h}\circ I)$ is of type $(1,1)$. Moreover, we may write $\gamma=d\hat{f}+d\hat{h}\circ I$, $d\gamma=-2\sqrt{-1}\ddb\hat{h}$. Here, we have $|\hat{h}|=O(r^{-2})$. 
We put $\psi=-2\hat{h}$ and estimate the higher derivatives of $\hat{h}$. 
For $x=(x_1,\ldots,x_4)\in \V=\R^4\setminus\{ 0\}$, we have 
\begin{align*}
\hat{h}(x)=\int_{\| x\|}^\infty\frac{h(tx/\| x\|)}{t}dt.
\end{align*}
For a multi-index $J=(j_1,\ldots,j_l)\in \{ 1,\ldots,4\}^l$, we put $\del_J=\frac{\del^k}{\del x_{j_1}\cdots\del x_{j_l}}$ and $l=|J|$. 
By the induction, for a multi-index $K$ with $k=|K|$, we may write 
\begin{align*}
\del_K\hat{h}
&=\sum_{j+|J|= k,|J|\le k-1}\sum_{m\ge 0}\frac{p_{j,m,J}(x)}{\| x\|^{j+m}}\del_Jh(x)\\
&\quad\quad
+\sum_{j+|J|\le k,|J|\ge 1}\sum_{m\ge 0}\frac{q_{j,m,J}(x)}{\| x\|^{j+|J|+m}}\int_{\| x\|}^\infty t^{|J|-1}\del_Jh\left(\frac{tx}{\| x\|}\right)dt,
\end{align*}
for some polynomial $p_{j,m,J},q_{j,m,J}$ whose degrees are $m$. Since $h=\iota_\xi \gamma$ and $|\nabla_0^k\xi|_{g_0}=O(r^{1-k})$, we have $|\del_J h|=O(r^{-2-|J|})$. Therefore, we have $|\del_K \hat{h}|=O(r^{-2-|K|})$. 
\end{proof}

\subsection{Closed $(1,1)$-forms on $X$ coming from the bubbles}
We construct closed $(1,1)$-forms on  $X=X(\Lambda,\Gamma)$ from the harmonic $2$-forms obtained in Subsection \ref{subsec instanton ALE}. Let $(M_p,\eta_p,\Theta_p,\varpi_p)_{p\in S}\approx (\T,S)$ and $A_{p,1},\ldots, A_{p,N_{\Gamma_p}}$, $\gamma_{p,1},\ldots,\gamma_{p,N_{\Gamma_p}}$ be the anti-self-dual connections and $1$-forms on $M_p$ obtained by Proposition \ref{prop instanton}. Denote by $\psi_{p,i}=\psi^{\gamma_{p,i}}\in C^\infty(\C^2\setminus\{ 0\}/\Gamma_p)$ the functions obtained by Proposition \ref{prop instanton potential}. Now, we define $\Xi'_{p,i,\varepsilon}\in\Omega^{1,1}(X)$ by 
\begin{align*}
\Xi'_{p,i,\varepsilon}|_{\overline{U(p,\varepsilon^{1/2})}}&:=\varepsilon^2\mathcal{I}_{p,\varepsilon}^*c_1(A_{p,i}),\\
\Xi'_{p,i,\varepsilon}|_{A(p,\varepsilon^{1/2})}&:=\sqrt{-1}\ddb \left( \chi_{\varepsilon^{1/2}}\cdot \varepsilon^2\mathcal{I}_{p,\varepsilon}^*\varpi_p^*\psi_{p,i}\right),\\
\Xi'_{p,i,\varepsilon}|_{X\setminus\overline{U(p,2\varepsilon^{1/2})}}&:=0,
\end{align*}
which are closed by the definition. For these $2$-forms, we have the following estimates.
\begin{prop}
Let $g_\varepsilon$ be the K\"ahler metric of $\omega_\varepsilon$ and $\nabla$ be the Levi-Civita connection of $g_\varepsilon$. 
\begin{align*}
\left|\nabla^k\left(\Xi'_{p,i,\varepsilon}|_{\overline{U(p,\varepsilon^{1/2})}}\right)\right|_{g_\varepsilon}&\lesssim \varepsilon^4\sigma_\varepsilon^{-4-k},\\
\left|\nabla^k\left(\Xi'_{p,i,\varepsilon}|_{A(p,\varepsilon^{1/2})}\right)\right|_{g_\varepsilon}&\lesssim \varepsilon^{2-k/2},\\
\nabla^k\left(\Xi'_{p,i,\varepsilon}|_{X\setminus\overline{U(p,2\varepsilon^{1/2})}}\right)&=0.
\end{align*}
\label{prop decay bubble 2 forms}
\end{prop}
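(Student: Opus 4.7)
The third identity is immediate from the definition of $\Xi'_{p,i,\varepsilon}$, so I would focus on the first two estimates.

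For the estimate on $\overline{U(p,\varepsilon^{1/2})}$, the plan is to exploit the fact that the cut-off $\chi_{\varepsilon^{1/2}}$ is identically $1$ on a neighborhood of this region, so by the construction in Subsection \ref{subsec appro} we have $\omega_\varepsilon=\varepsilon^2\mathcal{I}_{p,\varepsilon}^*\eta_p$ there. Consequently $g_\varepsilon=\varepsilon^2\mathcal{I}_{p,\varepsilon}^*g_p$, and since a constant rescaling of a metric does not change its Levi-Civita connection, $\nabla_{g_\varepsilon}=\mathcal{I}_{p,\varepsilon}^*\nabla_{g_p}$ on $\overline{U(p,\varepsilon^{1/2})}$. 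For a $(k+2)$-tensor $T$ one has $|T|_{\varepsilon^2 h}=\varepsilon^{-(k+2)}|T|_h$, so
\begin{align*}
\left|\nabla^k(\varepsilon^2\mathcal{I}_{p,\varepsilon}^*c_1(A_{p,i}))\right|_{g_\varepsilon}
=\varepsilon^2\cdot\varepsilon^{-(k+2)}\,\mathcal{I}_{p,\varepsilon}^*|\nabla_{g_p}^kc_1(A_{p,i})|_{g_p}.
\end{align*}
From Proposition \ref{prop instanton}(iv), $c_1(A_{p,i})=d\gamma_{p,i}$ with $|\nabla_0^k\gamma_{p,i}|_{g_0}=O(r^{-3-k})$, hence $|\nabla_0^kc_1(A_{p,i})|_{g_0}=O(r^{-4-k})$; together with smoothness of $c_1(A_{p,i})$ on the compact core of $M_p$ and the ALE decay $g_p-g_0=O(r^{-4})$, this upgrades to $|\nabla_{g_p}^kc_1(A_{p,i})|_{g_p}\lesssim(1+r)^{-4-k}$ on all of $M_p$. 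Using $\pi^*r_p=\varepsilon(\varpi_p\circ\mathcal{I}_{p,\varepsilon})^*r$, a direct case check ($\pi^*r_p\gtrless\varepsilon$) shows $1+r\asymp\varepsilon^{-1}\sigma_\varepsilon$ on $U(p,\varepsilon^{1/2})$, so $\mathcal{I}_{p,\varepsilon}^*(1+r)^{-4-k}\asymp\varepsilon^{4+k}\sigma_\varepsilon^{-4-k}$, and combining these bounds yields $\varepsilon^{-k}\cdot\varepsilon^{4+k}\sigma_\varepsilon^{-4-k}=\varepsilon^4\sigma_\varepsilon^{-4-k}$, as desired.

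For the estimate on $A(p,\varepsilon^{1/2})$, the plan is to feed $\psi_{p,i}$ into Lemma \ref{prop gluing region estimate}. By Proposition \ref{prop instanton potential}, $|\nabla_0^k\psi_{p,i}|_{g_0}=O(r^{-2-k})$, which is exactly the hypothesis of that lemma. Applying it with $f=\psi_{p,i}$ gives
\begin{align*}
|\nabla_0^k\Xi'_{p,i,\varepsilon}|_{g_0}\lesssim\varepsilon^{2-k/2}\quad\text{on }A(p,\varepsilon^{1/2}).
\end{align*}
To pass from $(g_0,\nabla_0)$ to $(g_\varepsilon,\nabla_{g_\varepsilon})$ on this annular region, I would invoke \eqref{eq decay gluing1}: for small $\varepsilon$ the metrics are comparable, $g_\varepsilon/2\le g_0\le 2g_\varepsilon$, and $|\nabla_0^k(g_\varepsilon-\pi^*g_0)|_{g_0}\lesssim\varepsilon^{2-k/2}$. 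An argument completely analogous to \eqref{ineq nabla k 1}--\eqref{ineq nabla k} in Subsection \ref{subsec estimate on LC} (the only difference being that the roles of $g$ and $\tilde g$ are now played by $g_0$ and $g_\varepsilon$, with $s_k\lesssim\varepsilon^{2-k/2}$) shows that $\nabla_{g_\varepsilon}$-derivatives of a tensor differ from its $\nabla_0$-derivatives by lower-order corrections that are absorbed in $\varepsilon^{2-k/2}$. This yields the claimed bound.

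I do not foresee any conceptual obstacle; the main technical points are the clean scaling identity $\nabla_{g_\varepsilon}=\mathcal{I}_{p,\varepsilon}^*\nabla_{g_p}$ on the inner region, the comparison $1+r\asymp\varepsilon^{-1}\sigma_\varepsilon$ (which makes the ALE decay match the weighted norm), and the careful but routine metric-comparison on the annular gluing region that lets one quote Lemma \ref{prop gluing region estimate}.
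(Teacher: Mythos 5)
Your proposal is correct and follows essentially the same route as the paper: the same rescaling identity on $\overline{U(p,\varepsilon^{1/2})}$ reducing the bound to $|\nabla_{g_p}^k c_1(A_{p,i})|_{g_p}\lesssim\max\{1,r\}^{-4-k}$ together with $\mathcal{I}_{p,\varepsilon}^*\max\{1,r\}=\varepsilon^{-1}\sigma_\varepsilon$, and on the annulus the same application of Lemma \ref{prop gluing region estimate} to $f=\psi_{p,i}$ followed by the metric-comparison argument of the form \eqref{ineq nabla k 1} to pass from $\nabla_0$ to $\nabla_{g_\varepsilon}$. No gaps.
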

\begin{proof}
Since $g_\varepsilon|_{\overline{U(p,\varepsilon^{1/2})}}=\varepsilon^2\mathcal{I}_{p,\varepsilon}^*g_p$, we can see 
\begin{align*}
\left|\nabla^k\left(\Xi'_{p,i,\varepsilon}|_{\overline{U(p,\varepsilon^{1/2})}}\right)\right|_{g_\varepsilon}
&=\varepsilon^2\left|\mathcal{I}_{p,\varepsilon}^*\left(\nabla_{g_p}^k c_1(A_{p,i})\right)\right|_{\varepsilon^2\mathcal{I}_{p,\varepsilon}^*g_p}\\
&=\varepsilon^{-k}\mathcal{I}_{p,\varepsilon}^*\left(\left|\nabla_{g_p}^k c_1(A_{p,i})\right|_{g_p}\right).
\end{align*}
By applying \eqref{ineq nabla k 1} to $\tilde{g}=g_p$ and $g=g_0$, there is a constant $C>0$ such that 
\begin{align*}
|\nabla_{g_p}^kT|_{g_p}\le |\nabla_0^kT|_{g_0}
+C\sum_{j=0}^{k-1}\max\{ 1,r\}^{-4-k+j}|\nabla_0^jT|_{g_0}
\end{align*}
for any tensor $T$ on $M_p$. 
By $({\rm iv})$ of Proposition \ref{prop instanton}, we have 
\begin{align*}
|\nabla_0^kc_1(A_{p,i})|_{g_0}
= O(r^{-4-k})
\end{align*}
as $r\to \infty$. Combining these estimates, there is $C$ such that 
\begin{align*}
|\nabla_{g_p}^kc_1(A_{p,i})|_{g_p}\le C\max\{ 1,r\}^{-4-k}.
\end{align*}
Since $(\mathcal{I}_{p,\varepsilon}^*\max\{ 1,r\})|_{U(p,\varepsilon^{1/2})}=\varepsilon^{-1}\sigma_\varepsilon$, we have 
\begin{align*}
\left|\nabla^k\left(\Xi'_{p,i,\varepsilon}|_{\overline{U(p,\varepsilon^{1/2})}}\right)\right|_{g_\varepsilon}&\lesssim \varepsilon^4\sigma_\varepsilon^{-4-k}.
\end{align*}
The estimate on $A(p,\varepsilon^{1/2})$ with respect to $g_0$ can be shown by Proposition \ref{prop gluing region estimate}. Moreover, applying \eqref{ineq nabla k 1} to $\tilde{g}=g_\varepsilon|_{A(p,\varepsilon^{1/2})}$ and $g=g_0$, we obtain the 2nd estimate.
 The third equality follows directly from the definition. 
\end{proof}

\section{Extension of closed anti-self-dual $2$-forms on $\T$}\label{sec ASD torus}
On the $4$-dimensional torus $\T^4_\Lambda$, the space of harmonic $2$-forms is the $6$-dimensional vector space whose basis is given by 
\begin{align*}
\omega_0&=\frac{\sqrt{-1}}{2}(dz\wedge d\bar{z}+dw\wedge d\bar{w}),\\
{\rm Re}(\Omega_0)&=\frac{1}{2}(dz\wedge dw+d\bar{z}\wedge d\bar{w}),\\
{\rm Im}(\Omega_0)&=\frac{-\sqrt{-1}}{2}(dz\wedge dw-d\bar{z}\wedge d\bar{w}),\\
\omega^-_1&:=\frac{\sqrt{-1}}{2}(dz\wedge d\bar{z}-dw\wedge d\bar{w}),\\
\omega^-_2&:=\frac{1}{2}(dz\wedge d\bar{w}+d\bar{z}\wedge dw),\\
\omega^-_3&:=\frac{\sqrt{-1}}{2}(dz\wedge d\bar{w}-d\bar{z}\wedge dw).
\end{align*}
Here, the space of the self-dual harmonic $2$-forms is generated by $\omega_0,{\rm Re}(\Omega_0),{\rm Im}(\Omega_0)$ and the space of the anti-self-dual harmonic $2$-forms $V^-$ is generated by $\omega^-_1,\omega^-_2,\omega^-_3$. 

In the above sections we have taken a finite subgroup $\Gamma\subset SU(2)$ to obtain $\T=\T(\Lambda,\Gamma)$ and $X=X(\Lambda,\Gamma)$. Since $SU(2)$-action preserves the self-dual $2$-forms, $\omega_0,{\rm Re}(\Omega_0),{\rm Im}(\Omega_0)$ descends to $\T$. Moreover, $\Gamma$ acts on $V^-$ by the pullback of differential forms. Then there is a natural correspondence between the anti-self-dual harmonic $2$-forms on $\T$ and the $\Gamma$-invariant anti-self-dual harmonic $2$-forms. Put $(V^-)^\Gamma=\{ a\in V^-|\, \forall\gamma\in\Gamma,\,\gamma^*a=a\}$. We can show the next proposition easily.
\begin{prop}
$(V^-)^\Gamma\neq \{ 0\}$ iff $\Gamma$ is a subgroup of $\gamma\cdot S^1 \cdot\gamma^{-1}$ for some $\gamma\in \Gamma$. Here, $S^1$ is a subgroup defined by 
\[ S^1=\left\{ \left.
\left (
\begin{array}{ccc}
e^{\sqrt{-1}\theta} & 0 \\
0 & e^{-\sqrt{-1}\theta}
\end{array}
\right )\in SU(2)\right|\, \theta\in\R\right\}.
\]
\end{prop}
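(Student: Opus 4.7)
The plan is to identify the action of $SU(2)$ on the three-dimensional space $V^-$ with the standard $SO(3)$-representation via the double cover $\rho\colon SU(2)\to SO(3)$, and then translate the fixed-point condition into a containment statement about maximal tori.

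First I would check that $SU(2)$ acts trivially on the self-dual space $\Lambda^+\subset \Lambda^2(\R^4)^*$. This holds because $SU(2)\subset U(2)$ preserves the K\"ahler form $\omega_0$, and since elements of $SU(2)$ have determinant $1$, they preserve the holomorphic volume $\Omega_0=dz\wedge dw$, hence its real and imaginary parts. Under the standard local isomorphism $SO(4)\cong(SU(2)_L\times SU(2)_R)/\{\pm 1\}$, one factor acts trivially on $\Lambda^+$ and as $SO(3)$ on $\Lambda^-$, while the other does the opposite. So our $SU(2)$ must be the factor acting on $V^-\cong \R^3$ via $\rho$; if desired this can be verified by computing the action of $\mathrm{diag}(e^{\sqrt{-1}\theta},e^{-\sqrt{-1}\theta})$ and one off-diagonal generator on the basis $\omega^-_1,\omega^-_2,\omega^-_3$.

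Given this, $(V^-)^\Gamma\neq\{0\}$ iff $\rho(\Gamma)\subset SO(3)$ has a nonzero fixed vector in $\R^3$, which happens iff $\rho(\Gamma)$ lies in the stabilizer of some unit vector, i.e.\ in a conjugate $g\cdot SO(2)\cdot g^{-1}$ for some $g\in SO(3)$. Taking preimages under $\rho$, the preimage of $SO(2)$ is the maximal torus $S^1\subset SU(2)$ (the maximal torus of $SU(2)$ covers the maximal torus of $SO(3)$ with kernel $\pm 1$). Hence $\rho(\Gamma)\subset g\cdot SO(2)\cdot g^{-1}$ is equivalent to $\Gamma\cdot\{\pm 1\}\subset \tilde g\cdot S^1\cdot \tilde g^{-1}$ for a lift $\tilde g\in SU(2)$ of $g$. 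Since $-1\in S^1$, this collapses to $\Gamma\subset \tilde g\cdot S^1\cdot \tilde g^{-1}$, which is the stated conclusion (with $\tilde g\in SU(2)$ playing the role of $\gamma$; if $\Gamma$ contains a non-central element, the centralizer argument lets one pick such a $\tilde g$ from inside $\Gamma$ as in the proposition's formulation).

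The main delicacy is Step~1, namely correctly identifying that our copy of $SU(2)$ acts on $V^-$ and not on $\Lambda^+$. Once this is granted, the conclusion is the representation-theoretic fact that a subgroup of $SO(3)$ fixes a nonzero vector in its defining representation iff it is conjugate into $SO(2)$, combined with the compatibility of maximal tori under the double cover $\rho$.
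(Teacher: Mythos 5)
The paper offers no proof of this proposition (it is introduced with ``We can show the next proposition easily''), so there is no argument of the author's to compare against; yours is the natural one, and its main body is correct. The identification of the $SU(2)$-action on $\Lambda^+\oplus\Lambda^-$ is right: $SU(2)$ fixes $\omega_0$, ${\rm Re}(\Omega_0)$, ${\rm Im}(\Omega_0)$, hence acts trivially on $\Lambda^+$, and the induced map to $SO(V^-)$ is the double cover $\rho\colon SU(2)\to SO(3)$ (its kernel consists of the elements acting trivially on all of $\Lambda^2$, i.e.\ $\{\pm 1\}$; concretely ${\rm diag}(e^{\sqrt{-1}\theta},e^{-\sqrt{-1}\theta})$ fixes $\omega^-_1$ and rotates the plane spanned by $\omega^-_2,\omega^-_3$ by the angle $2\theta$). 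The reduction to ``$\rho(\Gamma)$ fixes a nonzero vector in $\R^3$ iff $\rho(\Gamma)$ is conjugate into $SO(2)$'' and the identity $\rho^{-1}(SO(2))=S^1$ (because $-1\in S^1$) are both correct and give the equivalence with $\Gamma\subset \tilde{g}S^1\tilde{g}^{-1}$ for some $\tilde{g}\in SU(2)$.

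The one genuine flaw is the closing parenthetical. For $\gamma\in\Gamma$ one has $\gamma^{-1}\Gamma\gamma=\Gamma$, so the condition $\Gamma\subset\gamma S^1\gamma^{-1}$ is equivalent to $\Gamma\subset S^1$ on the nose; no centralizer argument can move the conjugating element into $\Gamma$ unless $\Gamma$ is already diagonal. Since $(V^-)^\Gamma\neq\{0\}$ is invariant under conjugating $\Gamma$ inside $SU(2)$ while $\Gamma\subset S^1$ is not, the literal wording of the proposition cannot be correct: for instance $\Gamma=g\Z_4g^{-1}$ with $\Z_4$ the diagonal cyclic group and $g\in SU(2)$ generic has $(V^-)^\Gamma\neq\{0\}$ but is not contained in $\gamma S^1\gamma^{-1}$ for any $\gamma\in\Gamma$. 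The statement your main argument actually proves, with $\gamma$ ranging over $SU(2)$, is clearly what is intended --- it is exactly what the paper uses immediately afterwards (``by the coordinate transformation by $SU(2)$-action, we can choose the holomorphic coordinate $(z,w)$ \dots such that $\Gamma\subset S^1$''). So treat ``$\gamma\in\Gamma$'' as a typo for ``$\gamma\in SU(2)$'' and delete the parenthetical rather than trying to justify the literal wording.
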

Now, we assume that $(V^-)^\Gamma\neq \{ 0\}$. Then by the coordinate transformation by $SU(2)$-action, we can choose the holomorphic coordinate $(z,w)$ on $\C^2$ such that $\Gamma\subset S^1$ and $\omega^-_1\in (V^-)^\Gamma$. Moreover, $\Gamma$ is one of the cyclic groups 
\[ \Z_m:=\left\{ \left.
\left (
\begin{array}{ccc}
e^{2\pi\sqrt{-1}l/m} & 0 \\
0 & e^{-2\pi\sqrt{-1}l/m}
\end{array}
\right )\in SU(2)\right|\, l\in \Z\right\}.
\]
Then we have the next proposition. 
\begin{prop}
Let $\Gamma=\Z_m$ for some $m\ge 2$. Then we have $(V^-)^\Gamma=\R\omega^-_1$ iff $\Gamma=\Z_m$ for $m\ge 3$, and $(V^-)^{\Z_2}=V^-$. 
\end{prop}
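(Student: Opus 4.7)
The plan is a direct computation using the explicit action of $\Gamma=\Z_m$ on the standard $(z,w)$-coordinates. First I would fix a generator $\gamma_m\in\Z_m$ acting by $(z,w)\mapsto (e^{2\pi\sqrt{-1}/m}z,\,e^{-2\pi\sqrt{-1}/m}w)$ and compute its pullback on each of $\omega^-_1,\omega^-_2,\omega^-_3$. Since $\gamma_m^*dz=e^{2\pi\sqrt{-1}/m}dz$ and $\gamma_m^*dw=e^{-2\pi\sqrt{-1}/m}dw$, the forms $dz\wedge d\bar z$ and $dw\wedge d\bar w$ are fixed, while $dz\wedge d\bar w$ picks up a factor $e^{4\pi\sqrt{-1}/m}$ and $d\bar z\wedge dw$ picks up its conjugate.

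This immediately yields $\gamma_m^*\omega^-_1=\omega^-_1$ without any restriction on $m$. On the $2$-plane $W:=\R\omega^-_2\oplus\R\omega^-_3\subset V^-$, grouping the complex exponentials into real and imaginary parts gives
\begin{align*}
\gamma_m^*\omega^-_2&=\cos(4\pi/m)\,\omega^-_2+\sin(4\pi/m)\,\omega^-_3,\\
\gamma_m^*\omega^-_3&=-\sin(4\pi/m)\,\omega^-_2+\cos(4\pi/m)\,\omega^-_3,
\end{align*}
so the action of $\gamma_m$ on $W$ is a rotation by the angle $4\pi/m$.

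From this description the statement is a one-line check. A rotation of a $2$-plane has a nonzero fixed vector iff its angle is an integer multiple of $2\pi$, which, for $m\geq 2$, occurs precisely when $m=2$. In that case the rotation is trivial and $W\subset (V^-)^{\Z_2}$, so $(V^-)^{\Z_2}=\R\omega^-_1\oplus W=V^-$. For $m\geq 3$ the rotation has no nonzero fixed vector, hence $(V^-)^{\Z_m}\cap W=\{0\}$ and $(V^-)^{\Z_m}=\R\omega^-_1$.

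The argument is essentially bookkeeping in the weight decomposition of $V^-$ under the maximal torus $S^1\subset SU(2)$: $\omega^-_1$ lies in the weight-$0$ summand, while $\omega^-_2\pm\sqrt{-1}\omega^-_3$ span the weight-$\pm 2$ summands. I do not anticipate a genuine obstacle; the only point requiring minor care is the sign convention when translating the complex expressions for $\omega^-_2,\omega^-_3$ into a real rotation matrix, which I would verify by direct substitution before stating the result.
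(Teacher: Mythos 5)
Your computation is correct, and it is precisely the weight/rotation argument the paper has in mind: the paper states this proposition without proof, treating it as an easy check, and your direct calculation of $\gamma_m^*$ on $dz\wedge d\bar z$, $dw\wedge d\bar w$, $dz\wedge d\bar w$ (yielding a rotation by $4\pi/m$ on ${\rm span}\{\omega^-_2,\omega^-_3\}$, trivial exactly when $m=2$) is the standard way to fill it in. No gaps.
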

From now on, we show that each form in $(V^-)^\Gamma$ can be glued with some harmonic $2$ forms on $M_p$ for every $p\in S$, and we can obtain a $2$-form on $X$.

\subsection{Moment maps on multi-Eguchi-Hanson spaces}
Let $(M,\eta,\Theta,\varpi)$ be an ALE Calabi-Yau manifold asymptotic to $\C^2/\Z_m$ for some $m\ge 2$. It is called the Eguchi-Hanson space if $m=2$, and the multi-Eguchi-Hanson space if $m\ge 3$. They are also called the ALE space of type $A_{m-1}$. In this subsection, we show that all of such spaces have trihamiltonian $G$-action for $G=S^1$ if $m\ge 3$, and $G=SU(2)$ if  $m=2$. If $M$ has a trihamiltonian $G$-action, then there is a hyper-K\"ahler moment map 
\begin{align*}
\mu\colon M\to \R^3\otimes \mathfrak{g}^*,
\end{align*}
where $\mathfrak{g}$ is the Lie algebra of $G$. 

Let $\T^m=(S^1)^m$ be the $m$-torus. 
By Kronheimer's construction in \cite{kronheimer1989construction}, $M$ can be obtained by the hyper-K\"ahler quotient of the $\T^m$-action on $\C^{2m}$ defined by 
\begin{align*}
&\quad\quad
(\lambda_1,\ldots,\lambda_m)\cdot(z_1,w_1,\ldots,z_m,w_m)\\
&\mapsto (\lambda_1\lambda_m^{-1}z_1,\lambda_1^{-1}\lambda_mw_1,\ldots,\lambda_m\lambda_{m-1}^{-1}z_m,\lambda_m^{-1}\lambda_{m-1}w_m).
\end{align*}
We fix an element $\zeta=(\zeta_1,\ldots,\zeta_m)\in\R^m$ such that $\sum_{l=1}^m\zeta_l=0$ and $\zeta_i\neq \zeta_j$ for $i\neq j$, then a level set of hyper-K\"ahler moment map is given by 
\begin{align*}
N_\zeta
&:=\left\{\left. (z_l,w_l)_{l=1}^m\in\C^{2m}\right|\, 
|z_l|^2-|w_l|^2-\zeta_l,\, z_lw_l\mbox{ are independent of }l
\right\},\\
K_\zeta
&:=\{ (z_l,w_l)_{l=1}^m\in N_\zeta|\, z_1\cdots z_m=0\mbox{ or }w_1\ldots w_m=0\},
\end{align*}
and we obtain $M=N_\zeta/\T^m$. Here, $K_\zeta/\T^m$ will be $\varpi^{-1}(0)$. The map $\varpi\colon M\to \C^2/\Z_m$ is given by 
\begin{align*}
(z_l,w_l)_{l=1}^m\mbox{ mod }\T^m\,\,\mapsto\,\,
\left( \sqrt[m]{z_1\cdots z_m},\sqrt[m]{w_1\cdots w_m}\right)\mbox{ mod }\Z_m.
\end{align*}
Here, the choices of $m$-th root of $z_1\cdots z_m$ and $w_1\cdots w_m$ have ambiguity, respectively, we take them such that 
\begin{align*}
\sqrt[m]{z_1\cdots z_m}\cdot\sqrt[m]{w_1\cdots w_m}=z_1w_1.
\end{align*}

Next, we put $G_{S^1}:=S^1$ and define $G_{S^1}$-action on $\C^{2m}$ by 
\begin{align*}
\lambda\cdot(z_l,w_l)_{l=1}^m&:=(\lambda z_l,\lambda^{-1}w_l)_{l=1}^m,
\end{align*}
for $\lambda\in G_{S^1}$. Then $G_{S^1}$, $\T^m$-actions commute with each other, hence $G_{S^1}\times \T^m$ acts on $\C^{2m}$. This action preserves $N_\zeta$ and $G_{S^1}\times \T^m$-action is trihamiltonian with respect to the standard hyper-K\"ahler structure, hence the trihamiltonian $G_{S^1}$-action on $(M,\eta)$ can be defined. 
Moreover, $\varpi$ is $G_{S^1}$-equivariant. Here, the $G_{S^1}$-action on $\C^2$ is defined by 
$\lambda\cdot(z,w)=(\lambda z,\lambda^{-1}w)$ for $\lambda\in G_{S^1}$, and it induces the $G_{S^1}$-action on $\C^2/\Z_m$. 

Denote by $\sqrt{-1}\mu\colon M\to\sqrt{-1}\R$ and $\sqrt{-1}\mu_0\colon \C^2/\Z_m\to \sqrt{-1}\R$ the moment map of $G_{S^1}$-action on $(M,\eta)$ and $(\C^2/\Z_m,\omega_0)$, respectively. By the definition of the moment maps, we have 
\begin{align*}
d\mu=\iota_{{\bm u}}\eta,\quad
d\mu_0=\iota_{{\bm u}_0}\omega_0,
\end{align*}
where $\bm{u},\bm{u}_0$ are the vector fields on $M,\C^2/\Z_m$ generated by $-\sqrt{-1}\in {\rm Lie}(G_{S^1})$, respectively. Note that the moment map is uniquely determined up to additive constants; however, we take $\mu_0$ such that $\mu_0(0)=0$. We also have $\varpi_*(\bm{u})=\bm{u}_0$ and 
\begin{align*}
\bm{u}_0&=2{\rm Im}\left( z\frac{\del}{\del z}-w\frac{\del}{\del w}\right),\\
\mu_0(z,w)&=\frac{1}{2}(|z|^2-|w|^2).
\end{align*}
Here, $(z,w)$ is the standard holomorphic coordinate on $\C^2$ and we consider $\mu_0,\bm{u}_0$ lifting to $\C^2$. Moreover, on $\C^2\setminus\{ 0\}/\Z_m$, we estimate $(\varpi^{-1})^*d\mu-d\mu_0$ and its higher derivatives as follows. Note that 
\begin{align*}
(\varpi^{-1})^*d\mu-d\mu_0
&=\iota_{\bm{u_0}}\left\{ (\varpi^{-1})^*\eta-\omega_0\right\}.
\end{align*}
By Theorem \ref{thm ALE potential}, we can see 
\begin{align*}
\left|\nabla_0^k\left\{ (\varpi^{-1})^*\eta-\omega_0\right\}\right|_{g_0}=O(r^{-4-k}).
\end{align*}
Since $|\bm{u}_0|_{g_0}=O(r)$, $|\nabla_0\bm{u}_0|_{g_0}=O(1)$ and $\nabla_0^k\bm{u}_0\equiv 0$ for $k\ge 2$, we have 
\begin{align*}
\left|\nabla_0^k\left\{ (\varpi^{-1})^*d\mu-d\mu_0\right\}\right|_{g_0}
&=O(r^{-3-k})
\end{align*}
for all $k\ge 0$. In particular, we have $\left| d \left\{ (\varpi^{-1})^*\mu-\mu_0\right\}\right|_{g_0}
=O(r^{-3})$, hence there is a constant $c$ such that $\left| (\varpi^{-1})^*\mu-\mu_0-c\right|=O(r^{-2})$. Now, the moment map $\mu$ has the ambiguity of additive constants, hence we may suppose $c=0$. 
Thus, we obtain the next proposition. 
\begin{prop}
Let $(M,\eta,\Theta,\varpi)$ be an ALE Calabi-Yau manifold asymptotic to $\C^2/\Z_m$ for some $m\ge 2$ and $\mu_0\colon \C^2/\Z_m\to \R$ be defined by $\mu_0(z,w)=\frac{1}{2}(|z^2|-|w|^2)$. 
There is a smooth function $\mu\colon M\to \R$ such that $|\nabla_0^k\{ (\varpi^{-1})^*)\mu-\mu_0\}|=O(r^{-2-k})$ for all $k\ge 0$. 
\label{prop moment S^1}
\end{prop}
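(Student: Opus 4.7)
The plan is to formalize rigorously the computation already sketched in the paragraph immediately preceding the statement. The function $\mu$ will be produced from Kronheimer's construction: since $(M,\eta,\Theta,\varpi)$ is an ALE Calabi-Yau space asymptotic to $\C^2/\Z_m$, by the Torelli-type theorem of \cite{Kronheimer1989Torelli} it arises from the hyper-K\"ahler quotient described just above Proposition \ref{prop moment S^1}, and the residual $G_{S^1}=S^1$-action is trihamiltonian. I would then take $\mu$ to be the first component of the hyper-K\"ahler moment map for this action; it satisfies $d\mu=\iota_{\bm u}\eta$ and is unique up to an additive constant. Since $\varpi$ is $G_{S^1}$-equivariant one has $\varpi_*\bm u=\bm u_0$, and the fundamental comparison identity
\begin{equation*}
(\varpi^{-1})^*d\mu-d\mu_0 \;=\; \iota_{\bm u_0}\bigl\{(\varpi^{-1})^*\eta-\omega_0\bigr\}
\end{equation*}
holds on the asymptotic end $(\C^2\setminus\{0\})/\Z_m\cong M\setminus\varpi^{-1}(0)$, reducing the problem to the asymptotic control of $(\varpi^{-1})^*\eta-\omega_0$ contracted with an explicit linear vector field.

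Next I would extract that asymptotic control from Theorem \ref{thm ALE potential}: the K\"ahler potential $\varphi$ satisfies $|\nabla_0^k\{(\varpi^{-1})^*\varphi-r^2/2\}|_{g_0}=O(r^{-2-k})$, and two further derivations yield $|\nabla_0^k\{(\varpi^{-1})^*\eta-\omega_0\}|_{g_0}=O(r^{-4-k})$. Combining this with the elementary estimates $|\bm u_0|_{g_0}=O(r)$, $|\nabla_0\bm u_0|_{g_0}=O(1)$, and $\nabla_0^k\bm u_0\equiv 0$ for $k\ge 2$, the Leibniz rule applied to the contraction produces
\begin{equation*}
\bigl|\nabla_0^k\{(\varpi^{-1})^*d\mu-d\mu_0\}\bigr|_{g_0} \;=\; O(r^{-3-k}) \qquad (k\ge 0).
\end{equation*}
This is one derivative shy of the target estimate.

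Finally I would integrate to bound $\mu$ itself. Setting $F:=(\varpi^{-1})^*\mu-\mu_0$ on $\{r\ge R_0\}$, the bound $|dF|_{g_0}=O(r^{-3})$ implies, via integration along radial rays against a spherical cross-section, that $F$ admits a well-defined limit $c$ at infinity with $|F-c|=O(r^{-2})$; replacing $\mu$ by $\mu-c$, which is permitted by the additive ambiguity of the moment map, gives $|F|=O(r^{-2})$. For $k\ge 1$ the required bound is immediate since $\nabla_0^kF=\nabla_0^{k-1}dF=O(r^{-2-k})$ from the previous step. The main obstacle is conceptual rather than computational: one must identify the contraction identity above, which makes the ALE decay of the hyper-K\"ahler form transfer cleanly to the moment map. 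Once that is in place everything reduces to Leibniz bookkeeping and a standard radial-integration lemma.
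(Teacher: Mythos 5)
Your proposal is correct and follows essentially the same route as the paper: the paper also defines $\mu$ as the moment map of the trihamiltonian $S^1$-action coming from Kronheimer's quotient construction, uses the identity $(\varpi^{-1})^*d\mu-d\mu_0=\iota_{\bm{u}_0}\{(\varpi^{-1})^*\eta-\omega_0\}$ together with the $O(r^{-4-k})$ decay from Theorem \ref{thm ALE potential} and the linearity of $\bm{u}_0$ to get $O(r^{-3-k})$ on the differentials, and then integrates radially and absorbs the constant into the additive ambiguity of $\mu$. The only cosmetic difference is that you invoke the Torelli theorem explicitly to justify that $M$ arises from the quotient construction, a point the paper leaves implicit.
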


Next, we consider the case of $m=2$. In this case, we have the larger symmetry on $M$.

Next we put $G_{SU(2)}=SU(2)$. In the above argument, we define an $G_{SU(2)}$-action on $\C^{2m}$ for $m=2$ by 
\begin{align*}
(a,b)\cdot(z_1,w_1,z_2,w_2)
:=(az_1-\bar{b}w_2,bz_2+\bar{a}w_1,az_2-\bar{b}w_1,bz_1+\bar{a}w_2)
\end{align*}
for $(a,b)\in G_{SU(2)}\cong S^3=\{ (u,v)\in\C^2|\, |u|^2+|v|^2=1\}$ and $(z_1,w_1,z_2,w_2)\in\C^4$. 
Then the $G_{SU(2)}$-action commutes with the $\T^2$-action and preserves the standard hyper-K\"ahler structure on $\C^2$, hence also preserves the hyper-K\"ahler moment map and its level set $N_\zeta$. Therefore, we can induce the $G_{SU(2)}$-action on $M$ and it is hamiltonian with respect to $\eta$. Moreover, the standard $G_{SU(2)}$-action on $\C^2$ descends to the action on $\C^2/\Z_2$, and we can see that 
\[ \varpi((a,b)(z_1,w_1,z_2,w_2))=
\left (
\begin{array}{ccc}
a & -\bar{b} \\
b & \bar{a}
\end{array}
\right )\varpi(z_1,w_1,z_2,w_2),
\]
hence $\varpi$ is $G_{SU(2)}$-equivariant. 

Here, we have identified $G_{SU(2)}$ with $S^3$, its Lie algebra can be identified with $T_{(1,0)}S^3=\R(\sqrt{-1},0)\oplus\R(0,1)\oplus\R(0,\sqrt{-1})$. Denote by $\bm{v}_1,\bm{v}_2,\bm{v}_3$ the vector fields on $M$ generated by $(-\sqrt{-1},0),(0,-1),(0,-\sqrt{-1})$, respectively and put $\bm{v}_{0,\alpha}=\varpi_*\bm{v}_\alpha$. Then we may write 
\begin{align*}
\bm{v}_{0,1}
&=2{\rm Im}\left( z\frac{\del}{\del z}-w\frac{\del}{\del w}\right),\\
\bm{v}_{0,2}
&=-2{\rm Re}\left( z\frac{\del}{\del w}-w\frac{\del}{\del z}\right),\\
\bm{v}_{0,3}
&=2{\rm Im}\left( w\frac{\del}{\del z}+z\frac{\del}{\del w}\right).
\end{align*}
Let $\nu_\alpha,\nu_{0,\alpha}$ be the hamiltonian functions for $\bm{v}_\alpha,\bm{v}_{0,\alpha}$, respectively, defined by 
\begin{align*}
d\nu_\alpha=\iota_{\bm{v}_\alpha}\eta,\quad
d\nu_{0,\alpha}=\iota_{\bm{v}_{0,\alpha}}\omega_0.
\end{align*}
Here,  we take $\bm{v}_{0,\alpha}$ such that $\bm{v}_{0,\alpha}(0)=0$, then we may write 
\begin{align*}
\nu_{0,1}(z,w)
&=\frac{1}{2}(|z|^2-|w|^2),\\
\nu_{0,2}(z,w)
&=-{\rm Im}\left( \bar{z}w\right),\\
\nu_{0,3}(z,w)
&={\rm Re}\left( \bar{z}w\right).
\end{align*}
By the same argument for $\mu,\mu_0$, we have the next proposition. 
\begin{prop}
Let $(M,\eta,\Theta,\varpi)$ be an ALE Calabi-Yau manifold asymptotic to $\C^2/\Z_2$ and $\nu_{0,\alpha}\colon \C^2/\Z_2\to \R$ be defined by $\nu_{0,1}=\frac{1}{2}(|z^2|-|w|^2)$, 
$-\sqrt{-1}\nu_{0,2}+\nu_{0,3}=\bar{z}w$. 
There is a smooth function $\nu_\alpha\colon M\to \R$ such that $|\nabla_0^k\{ (\varpi^{-1})^*\nu_\alpha-\nu_{0,\alpha}\}|=O(r^{-2-k})$ for all $\alpha=1,2,3$ and $k\ge 0$. 
\label{prop moment SU(2)}
\end{prop}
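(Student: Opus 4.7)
The plan is to mirror verbatim the argument used for Proposition \ref{prop moment S^1}, applied separately to each of the three linear vector fields $\bm{v}_1,\bm{v}_2,\bm{v}_3$ generated by the $G_{SU(2)}$-action. Since that action was built so as to commute with the $\T^2$-action on $\C^4$ and to preserve the standard hyper-K\"ahler structure, it descends to an isometric action on $M$ preserving the K\"ahler form $\eta$. Consequently, for each $\alpha=1,2,3$, the 1-form $\iota_{\bm{v}_\alpha}\eta$ is closed by Cartan's formula together with $d\eta=0$.

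To obtain a global hamiltonian function, one uses that $M$, as the minimal resolution of $\C^2/\Z_2$, is diffeomorphic to $T^*\C P^1$ and hence simply connected, so $H^1(M;\R)=0$. Thus there exists $\nu_\alpha\in C^\infty(M)$ with $d\nu_\alpha=\iota_{\bm{v}_\alpha}\eta$, uniquely determined up to an additive constant. The $SU(2)$-equivariance of $\varpi$ gives $\varpi_*\bm{v}_\alpha=\bm{v}_{0,\alpha}$ on the smooth locus, and therefore on $(\C^2\setminus\{0\})/\Z_2$ one has
\begin{align*}
(\varpi^{-1})^*d\nu_\alpha-d\nu_{0,\alpha}
=\iota_{\bm{v}_{0,\alpha}}\bigl\{(\varpi^{-1})^*\eta-\omega_0\bigr\}.
\end{align*}
Combining Theorem \ref{thm ALE potential}, which yields $|\nabla_0^k\{(\varpi^{-1})^*\eta-\omega_0\}|_{g_0}=O(r^{-4-k})$, with the explicit formulas for $\bm{v}_{0,\alpha}$ (which are linear in $(z,w)$, so $|\bm{v}_{0,\alpha}|_{g_0}=O(r)$, $|\nabla_0\bm{v}_{0,\alpha}|_{g_0}=O(1)$, and $\nabla_0^j\bm{v}_{0,\alpha}\equiv 0$ for $j\ge 2$), the Leibniz rule gives
\begin{align*}
\bigl|\nabla_0^k\{(\varpi^{-1})^*d\nu_\alpha-d\nu_{0,\alpha}\}\bigr|_{g_0}=O(r^{-3-k})
\end{align*}
for every $k\ge 0$.

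The $k=0$ case produces $|d\{(\varpi^{-1})^*\nu_\alpha-\nu_{0,\alpha}\}|_{g_0}=O(r^{-3})$, and integrating along radial rays on the connected end of $\C^2/\Z_2$ shows that $(\varpi^{-1})^*\nu_\alpha-\nu_{0,\alpha}$ converges at infinity to some constant $c_\alpha$ with error $O(r^{-2})$. Absorbing $c_\alpha$ into the additive normalization of $\nu_\alpha$ and reading $\nabla_0^k=\nabla_0^{k-1}\circ d$ for $k\ge 1$ gives the required bounds $|\nabla_0^k\{(\varpi^{-1})^*\nu_\alpha-\nu_{0,\alpha}\}|_{g_0}=O(r^{-2-k})$. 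No step of this argument is genuinely new: the only mildly delicate point is the simple-connectedness of $M$ used to promote the closed form $\iota_{\bm{v}_\alpha}\eta$ to a global primitive, but this is immediate from $M\cong T^*\C P^1$. All remaining ingredients are a direct transcription of the proof of Proposition \ref{prop moment S^1}.
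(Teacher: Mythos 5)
Your proposal is correct and follows essentially the same route as the paper, which explicitly derives Proposition \ref{prop moment SU(2)} ``by the same argument for $\mu,\mu_0$'': equivariance of $\varpi$, the identity $(\varpi^{-1})^*d\nu_\alpha-d\nu_{0,\alpha}=\iota_{\bm{v}_{0,\alpha}}\{(\varpi^{-1})^*\eta-\omega_0\}$, the $O(r^{-4-k})$ decay from Theorem \ref{thm ALE potential}, and absorption of the limiting constant into the normalization. Your appeal to $H^1(M;\R)=0$ for the existence of a global primitive is a harmless substitute for the paper's observation that the action is hamiltonian by construction via the hyper-K\"ahler quotient.
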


\subsection{Anti-self-duality of $\sqrt{-1}\ddb\mu$ and $\sqrt{-1}\ddb\nu_\alpha$}
In this subsection, we show that $\sqrt{-1}\ddb\mu$ and $\sqrt{-1}\ddb\nu_\alpha$ obtained in Propositions \ref{prop moment S^1} and \ref{prop moment SU(2)} are anti-self-dual. 
Let $(M,g,I,J,K)$ be a hyper-K\"ahler manifold, that is to say, $g$ is a Riemannian metric, $I,J,K$ are  complex structures with $IJK=-1$, $g$ is hermitian with respect to $I,J,K$ and $\omega_I=g(I\cdot,\cdot)$, $\omega_J=g(J\cdot,\cdot)$, $\omega_K=g(K\cdot,\cdot)$ are closed. 
Suppose that $\bm{u}\in\mathcal{X}(M)$ is a vector field preserving $g,I,J,K$. Then the flow $\{ \exp(t\bm{u})\}_{t\in\R}$ gives an $\R$-action on $(M,g,I,J,K)$, and we assume this action is trihamiltonian. Therefore, there is a smooth map $F=(F_1,F_2,F_3)\colon M\to \R^3$ such that $dF_1=\iota_{\bm{u}}\omega_I$, $dF_2=\iota_{\bm{u}}\omega_J$, $dF_3=\iota_{\bm{u}}\omega_K$.

It follows from the next Lemma. 
\begin{lem}
Let $\dim_\R M=4$ and $M$ be a complex manifold with respect to $I$. Then $\sqrt{-1}\ddb F_1$ is an anti-self-dual $2$-form.
\label{lem ASD}
\end{lem}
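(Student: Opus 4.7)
The plan is to reduce the statement to the harmonicity of $F_1$ with respect to the Riemannian metric $g$, and then invoke a standard identity on K\"ahler surfaces. On the K\"ahler surface $(M,g,I,\omega_I)$ every real $(1,1)$-form decomposes orthogonally into its component along $\omega_I$ (the self-dual part) and its primitive part (the anti-self-dual part), so $\sqrt{-1}\ddb F_1$ is anti-self-dual if and only if it is primitive, equivalently $\sqrt{-1}\ddb F_1\wedge\omega_I=0$. Since this wedge equals, up to a nonzero universal constant, $\Delta_gF_1$ times the volume form, it suffices to prove $\Delta_g F_1=0$.

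To prove the harmonicity of $F_1$, I will exhibit it as the imaginary part of a $J$-holomorphic function. The quaternionic identities $IJ=K$ and $KJ=-I$ (both consequences of $IJK=-1$) give $\omega_I(J\cdot,\cdot)=\omega_K$ and $\omega_K(J\cdot,\cdot)=-\omega_I$, and therefore the $2$-form
\begin{align*}
\Omega_J:=\omega_K+\sqrt{-1}\omega_I
\end{align*}
satisfies $\Omega_J(JX,Y)=\sqrt{-1}\,\Omega_J(X,Y)$ and is of type $(2,0)$ with respect to $J$. Because $\bm{u}$ preserves $g,I,J,K$ it preserves each K\"ahler form, and the complex-valued function $G:=F_3+\sqrt{-1}F_1$ satisfies
\begin{align*}
dG=\iota_{\bm{u}}\omega_K+\sqrt{-1}\,\iota_{\bm{u}}\omega_I=\iota_{\bm{u}}\Omega_J.
\end{align*}
The contraction of a $(2,0)$-form with a real vector field is of type $(1,0)$, so $dG$ is a $(1,0)$-form with respect to $J$; equivalently $\delb_J G=0$, so $G$ is $J$-holomorphic and $F_1=\mathrm{Im}(G)$ is $J$-pluriharmonic.

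Pluriharmonic functions on any K\"ahler manifold are harmonic for the underlying Riemannian metric, since the $\omega_J$-trace of $\del_J\delb_J F_1$ is proportional to $\Delta_g F_1$ and this trace vanishes when $\del_J\delb_J F_1=0$. As $\Delta_g$ depends only on $g$ and not on the compatible complex structure, $F_1$ being $J$-pluriharmonic forces $\Delta_g F_1=0$, and the reduction in the first paragraph completes the proof.

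The only delicate point is the sign bookkeeping in checking that $\Omega_J$ is of type $(2,0)$ rather than $(0,2)$ with respect to $J$; different sign conventions for the triple $(I,J,K)$ simply replace $\Omega_J$ by its conjugate and interchange the roles of the real and imaginary parts of $G$. Apart from this routine verification, the argument is a direct application of the standard hyper-K\"ahler moment map formalism.
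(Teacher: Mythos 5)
Your proof is correct, but it takes a genuinely different route from the paper's. The paper argues pointwise: since $dF_1\circ I=g(\bm{u},\cdot)=dF_2\circ J=dF_3\circ K$, the form $-2\sqrt{-1}\ddb F_1=d(dF_1\circ I)$ coincides with $d(dF_2\circ J)$ and $d(dF_3\circ K)$, hence is of type $(1,1)$ with respect to each of $I,J,K$ simultaneously, and on a $4$-dimensional hyper-K\"ahler manifold the simultaneous $(1,1)$-forms are exactly the anti-self-dual ones. That is pure linear algebra at each point and needs no Hodge-theoretic input. You instead split anti-self-duality into two facts: a real $(1,1)$-form (with respect to $I$) is ASD iff it is primitive, i.e.\ iff its $\omega_I$-trace $\Delta_gF_1$ vanishes; and $F_1$ is harmonic because $F_3+\sqrt{-1}F_1=\iota_{\bm u}(\omega_K+\sqrt{-1}\omega_I)$ is $J$-holomorphic, hence $F_1$ is $J$-pluriharmonic. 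Your key lemma --- that the two ``transverse'' components of a hyper-K\"ahler moment map combine into a holomorphic function --- is the same fact the paper invokes elsewhere for $\mu_\C=\mu_2+\sqrt{-1}\mu_3$ in the proof of Proposition \ref{prop hk quotient}, so it fits naturally into the framework; it also yields the harmonicity of $F_1$ as an explicit byproduct. The trade-off is that your argument uses the K\"ahler identity relating the $\omega_I$-trace of $\sqrt{-1}\ddb F_1$ to $\Delta_g F_1$ and the metric-independence-of-$\Delta_g$ observation, whereas the paper's argument extracts slightly more information (the form is $(1,1)$ for all three complex structures) with less machinery. The sign issue you flag for $\Omega_J$ is indeed harmless, since the conjugate convention only exchanges $G$ with $\bar G$ and leaves the pluriharmonicity of $F_1$ intact.
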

\begin{proof}
We may write $\sqrt{-1}\ddb f=-d(df\circ I)/2$ for any function $f$. Since $dF_1=\iota_{\bm{u}}\omega_I$, we have $dF_1\circ I=g(\bm{u},\cdot)$. Therefore, we have $dF_1\circ I=dF_2\circ J=dF_3\circ K$. 
Since $d(dF_1\circ I)$ is of type $(1,1)$ with respect to $I$, it is of type $(1,1)$ with respect to $J,K$. In the hyper-K\"ahler manifolds of real dimension $4$, $(1,1)$-forms with respect to $I,J,K$ are always anti-self-dual. 
\end{proof}

\subsection{Closed $(1,1)$-forms on $X$ coming from $\omega_\alpha^-$}
Here, we glue $\omega_\alpha^-$ with $\sqrt{-1}\ddb\nu_\alpha$. Notice that $\omega_\alpha^-=\sqrt{-1}\ddb\nu_{0,\alpha}$. 
Let $(\T,S)\approx (M_p,\eta_p,\Theta_p,\varpi_p)_{p\in S}$. 
If $\Gamma_p=\Z_m$ for $m\ge 3$, 
denote by $\nu_{p,1}\in C^\infty(M_p)$ the functions obtained by Proposition \ref{prop moment S^1}. If $\Gamma_p=\Z_2$, 
denote by $\nu_{p,\alpha}\in C^\infty(M_p)$ the functions obtained by Proposition \ref{prop moment SU(2)}. 

Now, we define $\omega^-_{\alpha,\varepsilon}\in\Omega^{1,1}(X)$ by 
\begin{align*}
\omega^-_{\alpha,\varepsilon}|_{\overline{U(p,\varepsilon^{1/2})}}&:=\sqrt{-1}\varepsilon^2\mathcal{I}_{p,\varepsilon}^*\ddb\nu_{p,\alpha},\\
\omega^-_{\alpha,\varepsilon}|_{A(p,\varepsilon^{1/2})}&:=\pi^*\omega_\alpha^-
+\sqrt{-1}\ddb \left( \chi_{\varepsilon^{1/2}}\cdot \varepsilon^2\mathcal{I}_{p,\varepsilon}^*(\nu_{p,\alpha}-\varpi_p^*\nu_{0,\alpha})\right),\\
\omega^-_{\alpha,\varepsilon}|_{X\setminus\overline{\bigcup_pU(p,2\varepsilon^{1/2})}}&:=\pi^*\omega_\alpha^-,
\end{align*}
which are closed by the definition. 
The next proposition can be shown by the same argument to show Proposition \ref{prop decay bubble 2 forms}.
\begin{prop}\label{prop decay outside 2 forms}
Let $g_\varepsilon$ be the K\"ahler metric of $\omega_\varepsilon$ and $\nabla$ be the Levi-Civita connection of $g_\varepsilon$. 
\begin{align*}
\left|\nabla^k\left(\omega^-_{\alpha,\varepsilon}|_{\overline{U(p,\varepsilon^{1/2})}}\right)\right|_{g_\varepsilon}&\lesssim \varepsilon^4\sigma_\varepsilon^{-4-k},\\
\left|\omega^-_{\alpha,\varepsilon}|_{A(p,\varepsilon^{1/2})}\right|_{g_\varepsilon}&\lesssim 1,\\
\left|\nabla^k\left(\omega^-_{\alpha,\varepsilon}|_{A(p,\varepsilon^{1/2})}\right)\right|_{g_\varepsilon}&\lesssim \varepsilon^{2-k/2}\quad (k\ge 1),\\
\left|\omega^-_{\alpha,\varepsilon}|_{X\setminus\overline{U(p,2\varepsilon^{1/2})}}\right|_{g_\varepsilon}&={\rm constant},\\
\nabla^k\left(\omega^-_{\alpha,\varepsilon}|_{X\setminus\overline{U(p,2\varepsilon^{1/2})}}\right)&=0\quad (k\ge 1),\\
\left|\omega_{\varepsilon,\alpha}^-\wedge\omega_\varepsilon|_{A(p,\varepsilon^{1/2})}\right|_{g_\varepsilon}
&\lesssim \varepsilon^2.
\end{align*}
\end{prop}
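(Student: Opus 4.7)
The proof parallels that of Proposition \ref{prop decay bubble 2 forms}, exploiting the piecewise definition of $\omega^-_{\alpha,\varepsilon}$ together with the asymptotic decay
\begin{align*}
|\nabla_0^k h_{p,\alpha}|_{g_0} = O(r^{-2-k}) \qquad (k\ge 0),
\end{align*}
where $h_{p,\alpha} := (\varpi_p^{-1})^*\nu_{p,\alpha} - \nu_{0,\alpha}$, as supplied by Propositions \ref{prop moment S^1} and \ref{prop moment SU(2)}.

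On $\overline{U(p,\varepsilon^{1/2})}$ the rescaling $g_\varepsilon = \varepsilon^2\mathcal{I}_{p,\varepsilon}^*g_p$ gives, exactly as in the proof of Proposition \ref{prop decay bubble 2 forms},
\begin{align*}
|\nabla^k\omega^-_{\alpha,\varepsilon}|_{g_\varepsilon}
= \varepsilon^{-k}\mathcal{I}_{p,\varepsilon}^*\bigl|\nabla_{g_p}^k\sqrt{-1}\ddb\nu_{p,\alpha}\bigr|_{g_p},
\end{align*}
reducing matters to estimating $|\nabla_{g_p}^k\sqrt{-1}\ddb\nu_{p,\alpha}|_{g_p}$ at infinity. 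Because $\sqrt{-1}\ddb\nu_{0,\alpha} = \omega^-_\alpha$ is parallel on the flat orbifold $\C^2/\Gamma_p$, every $\nabla_0$-derivative of this leading term vanishes, and the correction satisfies $|\nabla_0^j\sqrt{-1}\ddb h_{p,\alpha}|_{g_0} = O(r^{-4-j})$. Converting from $\nabla_0$ to $\nabla_{g_p}$ via \eqref{ineq nabla k 1} then yields $|\nabla_{g_p}^k\sqrt{-1}\ddb\nu_{p,\alpha}|_{g_p} \lesssim r^{-4-k}$ in the relevant range, and substituting $\mathcal{I}_{p,\varepsilon}^*r = \varepsilon^{-1}\sigma_\varepsilon$ produces the announced bound $\varepsilon^4\sigma_\varepsilon^{-4-k}$.

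On the annulus $A(p,\varepsilon^{1/2})$ I split
\begin{align*}
\omega^-_{\alpha,\varepsilon} = \pi^*\omega^-_\alpha + \sqrt{-1}\ddb\bigl(\chi_{\varepsilon^{1/2}}\cdot\varepsilon^2\mathcal{I}_{p,\varepsilon}^*\varpi_p^*h_{p,\alpha}\bigr).
\end{align*}
The first piece is parallel with respect to $\pi^*g_0$, which bounds its $g_0$-norm uniformly and kills all higher $\nabla_0$-derivatives; the correction term is controlled by Lemma \ref{prop gluing region estimate} applied to $f = h_{p,\alpha}$, yielding $\lesssim \varepsilon^{2-k/2}$ in $g_0$-norm for $k\ge 1$. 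Since $g_\varepsilon - \pi^*g_0$ is itself of size $\varepsilon^{2-k/2}$ on the annulus by \eqref{eq decay gluing1}, transferring from $g_0$ to $g_\varepsilon$ through \eqref{ineq nabla k 1} preserves the estimate. On the exterior $X\setminus\overline{U(p,2\varepsilon^{1/2})}$ the conclusions are immediate, since both $\omega^-_{\alpha,\varepsilon} = \pi^*\omega^-_\alpha$ and $g_\varepsilon = \pi^*g_0$ descend from the parallel structures on $\T^4_\Lambda$.

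The final wedge estimate on $A(p,\varepsilon^{1/2})$ hinges on the anti-self-duality identity $\omega^-_\alpha \wedge \omega_0 = 0$ for the flat model. Writing $\omega^-_{\alpha,\varepsilon} = \pi^*\omega^-_\alpha + E_\alpha$ and $\omega_\varepsilon = \pi^*\omega_0 + E$ with $|E_\alpha|_{g_\varepsilon}, |E|_{g_\varepsilon} \lesssim \varepsilon^2$ on the annulus (by the previous step and \eqref{eq decay gluing1}, respectively), the vanishing of $\pi^*\omega^-_\alpha \wedge \pi^*\omega_0$ leaves only the cross terms $\pi^*\omega^-_\alpha\wedge E$, $E_\alpha\wedge\pi^*\omega_0$, and $E_\alpha\wedge E$, each of order at most $\varepsilon^2$. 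The main technical delicacy is the innermost estimate: unlike the Chern-form case of Proposition \ref{prop decay bubble 2 forms}, where $c_1(A_{p,i})$ itself decays as $r^{-4}$, here the 2-form $\sqrt{-1}\ddb\nu_{p,\alpha}$ has the non-vanishing asymptotic value $\omega^-_\alpha$, and one must extract the sharp $r^{-4-k}$ derivative decay purely from the parallelism of this leading term combined with the decay of $h_{p,\alpha}$.
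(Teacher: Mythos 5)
Your proposal is correct and follows essentially the same route as the paper's proof of this proposition: rescaling via $g_\varepsilon=\varepsilon^2\mathcal{I}_{p,\varepsilon}^*g_p$ on $\overline{U(p,\varepsilon^{1/2})}$, splitting $\omega^-_{\alpha,\varepsilon}$ on the annulus into the $\nabla_0$-parallel piece $\pi^*\omega^-_\alpha$ plus the correction controlled by Lemma \ref{prop gluing region estimate}, converting between $\nabla_0$ and $\nabla_{g_\varepsilon}$ (resp.\ $\nabla_{g_p}$) via \eqref{ineq nabla k 1}, and deducing the wedge bound from $\pi^*\omega^-_\alpha\wedge\pi^*\omega_0=0$ together with the $O(\varepsilon^2)$ closeness of both factors to their flat models on $A(p,\varepsilon^{1/2})$. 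The ``technical delicacy'' you flag at the end is genuine: since $\sqrt{-1}\ddb\nu_{p,\alpha}$ tends to the nonvanishing parallel form $\omega^-_\alpha$ at infinity on $M_p$, the first displayed estimate can only be obtained for $k\ge 1$ (for $k=0$ one gets $\lesssim 1$, not $\varepsilon^4\sigma_\varepsilon^{-4}$); this restriction is omitted from the statement and passed over in the paper's own proof, but it is exactly how the proposition is later invoked in Proposition \ref{prop holder F}, so your reading is the right one.
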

\begin{proof}
The proof is almost the same as the proof of Proposition \ref{prop decay bubble 2 forms}. 
The difference is that we have to estimate $\nabla^k(\pi^*\omega_\alpha^-)$ on $A(p,\varepsilon^{1/2})$ and $X\setminus\overline{U(p,2\varepsilon^{1/2})}$. 
Since $g_\varepsilon=g_0$ on $X\setminus\overline{U(p,2\varepsilon^{1/2})}$, hence $\pi^*\omega_\alpha^-$ is parallel with respect to $g_\varepsilon$ on this region. It gives the $4$th and $5$th equality. On $A(p,\varepsilon^{1/2})$, we have $g_0/2\le g_\varepsilon \le 2g_\varepsilon$ and $|\nabla_0^k\omega_\varepsilon|_{g_0}\lesssim \varepsilon^{2-k/2}$ for sufficiently small $\varepsilon$ and $k\ge 1$ by \eqref{eq decay gluing1}. Applying \eqref{ineq nabla k 1} to $\tilde{g}=g_\varepsilon$ and $g=g_0$, we have $|\nabla^k(\pi^*\omega_\alpha^-)|_{g_\varepsilon}\lesssim \varepsilon^{2-k/2}$ on $A(p,\varepsilon^{1/2})$, since $\nabla_0(\pi^*\omega_\alpha^-)=0$ and $|\pi^*\omega_\alpha^-|_{g_0}$ is constant. 
The $6$th estimate follows from $\pi^*\omega_\alpha^-\wedge\pi^*\omega_0=0$ and $|\omega^-_{\alpha,\varepsilon} - \pi^*\omega_\alpha^-|_{g_\varepsilon}\lesssim \varepsilon^2$, $|\omega_\varepsilon-\pi^*\omega_0|_{g_\varepsilon}\lesssim \varepsilon^2$ on $A(p,\varepsilon^{1/2})$. 
\end{proof}

\section{Cohomology groups}
Here, we give the basis of $H_2(X,\R)$ and compute the cohomology classes represented by the closed $2$-forms constructed in the previous sections.

By the arguments in Subsections \ref{subsec kummer} and \ref{subsec appro}, 
We have the decomposition $X=(\T\setminus S)\cup \bigsqcup_{p\in S}U(p,t)$ for a sufficiently small $t>0$ as smooth manifolds, and $(\T\setminus S)\cap \bigsqcup_{p\in S}U(p,t)$ is homotopy equivalent to $\bigsqcup_{p\in S}S^3/\Gamma_p$. Then, by the Mayer-Vietoris exact sequence for the de Rham cohomology groups and $H^1(S^3/\Gamma_p)=H^3(S^3/\Gamma_p)=\{ 0\}$, we have the following isomorphism 
\begin{align*}
i_{\T\setminus S}^*\oplus \bigoplus_{p\in S}i_{U(p,t)}^*\colon H^2(X)\to H^2(\T\setminus S)\oplus\left( \bigoplus_{p\in S} H^2(U(p,t))\right).
\end{align*}
Here, for a subset $S\subset X$, $i_S\colon S\to X$ is the inclusion map. 
Denote by $\pi_\Gamma\colon \T^4_\Lambda\to \T$ the quotient map, then we have 
$\T^4_\Lambda=(\T^4_\Lambda\setminus\pi_\Gamma^{-1}(S))\cup\bigsqcup^{\# \pi_\Gamma^{-1}(S))}B^4$, where $B^4\subset \C^2$ is the open unit ball. Here, we have $(\T^4_\Lambda\setminus\pi_\Gamma^{-1}(S))\cap\bigsqcup^{\# \pi_\Gamma^{-1}(S))}B^4=\bigsqcup^{\# \pi_\Gamma^{-1}(S))}S^3$. Then, by the Mayer-Vietoris exact sequence, we have the isomorphism 
\begin{align*}
i_{\T^4_\Lambda\setminus\pi_\Gamma^{-1}(S)}^*\colon H^2(\T^4_\Lambda)\to H^2(\T^4_\Lambda\setminus\pi_\Gamma^{-1}(S)).
\end{align*}
Now, we have the natural $\Gamma$-actions on $H^2(\T^4_\Lambda)$ and $ H^2(\T^4_\Lambda\setminus\pi_\Gamma^{-1}(S))$ such that the above isomorphism is $\Gamma$-equivariant, hence we obtain the isomorphisms between $\Gamma$-invariant subspaces 
\begin{align*}
H^2(\T^4_\Lambda)^\Gamma\stackrel{\cong}{\rightarrow} H^2(\T^4_\Lambda\setminus\pi_\Gamma^{-1}(S))^\Gamma\cong H^2(\T\setminus S).
\end{align*}
Next, we consider $H^2(U(p,t))$. By the identification $U(p,t)\cong \varpi_p^{-1}(B_{\Gamma_p}(t))$, we have the inclusion $i_t\colon U(p,t)\hookrightarrow M_p$. Since $U(p,t)\setminus \overline{U(p,t/2)}\cong (S^3/\Gamma_p)\times (t/2,t)$ and $M_p\setminus \overline{U(p,t/2)}\cong (S^3/\Gamma_p)\times (t/2,+\infty)$ and we can take a smooth increasing function $\tau\colon (t/2,t)\to (t/2,+\infty)$ such that $\tau(t/2)=t/2$, $\lim_{s\to t}\tau(s)=+\infty$, then $i_t$ is a homotopy equivalence. In particular, $i_t^*\colon H^2(M_p)\to H^2(U(p,t))$ is an isomorphism.

Combining these identifications, we have the following isomorphism
\begin{align}
\mathcal{J}\colon H^2(X)\stackrel{\cong}{\rightarrow} H^2(\T^4_\Lambda)^\Gamma\oplus\left(\bigoplus_{p\in S} H^2(M_p)\right).\label{eq def J}
\end{align}
For a $2$-form $\alpha$, denote by $[\alpha]$ its cohomology class. Then 
\begin{align*}
\{ [\omega_0],\,[{\rm Re}(\Omega_0)],\,[{\rm Im}(\Omega_0)],\, [\omega^-_\alpha]|\, \omega^-_\alpha\in (V^-)^\Gamma\}
\end{align*}
is a basis of $H^2(\T^4_\Lambda)^\Gamma$, and $\{ [c_1(A_{p,i})]|\, i=1,\ldots,N_{\Gamma_p}\}$ is a basis of $H^2(M_p)$ by $({\rm ii})$ of Theorem \ref{thm ASD on ALE}. Moreover, for 
\begin{align*}
a\in H^2(\T^4_\Lambda)^\Gamma\oplus\left(\bigoplus_{p\in S} H^2(M_p)\right),
\end{align*}
we put 
\begin{align*}
a_X:=\mathcal{J}^{-1}(a).
\end{align*} 
For example, if $\alpha\in \Omega^2(M_p)$ is closed, then $[\alpha]$ is a cohomology class on $M_p$ and $[\alpha]_X=\mathcal{J}^{-1}([\alpha])$ is a cohomology class on $X$. Since $\mathcal{J}$ is an isomorphism, 
\begin{align*}
[\omega_0]_X\,[{\rm Re}(\Omega_0)]_X,\,[{\rm Im}(\Omega_0)]_X,\, [\omega^-_\alpha]_X,\, [c_1(A_{p,i})]_X
\end{align*}
form a basis of $H^2(X)$.

\begin{prop}
We have 
\begin{align*}
[\omega_\varepsilon]&=[\omega_0]_X+\varepsilon^2\sum_{p\in S}[\eta_p]_X,\\
[{\rm Re}(\Omega)]&=[{\rm Re}(\Omega_0)]_X,\quad
[{\rm Im}(\Omega)]=[{\rm Im}(\Omega_0)]_X,\\
[\Xi'_{p,i,\varepsilon}]&=\varepsilon^2[c_1(A_{p,i})]_X,\quad
[\omega^-_{\alpha,\varepsilon}]=[\omega^-_\alpha]_X.
\end{align*}
In particular, the cohomology classes $\varepsilon^{-2}[\Xi'_{p,i,\varepsilon}],[\omega^-_{\alpha,\varepsilon}]$ are independent of $\varepsilon$. 
\label{prop decomp H2X}
\end{prop}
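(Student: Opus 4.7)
The plan is to apply the Mayer--Vietoris isomorphism $\mathcal{J}$ from \eqref{eq def J} and read off its two components separately: the restriction $[\tau|_V]\in H^2(V)\cong H^2(\T^4_\Lambda)^\Gamma$, where $V=X\setminus\pi^{-1}(S)$, and the classes $(i_t^*)^{-1}[\tau|_{U(p,t)}]\in H^2(M_p)$ for each $p\in S$, with $i_t$ the natural inclusion $\mathcal{I}_{p,1}\colon U(p,t)\hookrightarrow M_p$ that is used in defining $\mathcal{J}$.

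For the $H^2(\T^4_\Lambda)^\Gamma$-component I would use that the open inclusion $W_\varepsilon:=X\setminus\bigcup_p\overline{U(p,2\varepsilon^{1/2})}\hookrightarrow V$ is a homotopy equivalence, since each deleted set $\overline{U(p,2\varepsilon^{1/2})}\setminus\pi^{-1}(p)$ is diffeomorphic to a punctured closed ball that retracts onto its outer sphere $S^3/\Gamma_p$. On $W_\varepsilon$ the defining formulas in Subsections~\ref{subsec appro},~\ref{subsec instanton ALE} and Section~\ref{sec ASD torus} simplify to $\omega_\varepsilon=\pi^*\omega_0$, $\omega^-_{\alpha,\varepsilon}=\pi^*\omega^-_\alpha$, and $\Xi'_{p,i,\varepsilon}=0$; moreover $\Omega=\pi^*\Omega_0$ on all of $V$ by construction. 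Passing to cohomology, this reads off the $V$-components as $[\omega_0]$, $[\omega^-_\alpha]$, $0$, $[{\rm Re}(\Omega_0)]$, and $[{\rm Im}(\Omega_0)]$ respectively.

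For the $H^2(M_p)$-component, the restriction to $\overline{U(p,\varepsilon^{1/2})}$ is $\varepsilon^2\mathcal{I}_{p,\varepsilon}^*\eta_p$, $\varepsilon^2\mathcal{I}_{p,\varepsilon}^*c_1(A_{p,i})$, $\sqrt{-1}\varepsilon^2\mathcal{I}_{p,\varepsilon}^*\ddb\nu_{p,\alpha}$, and $\varepsilon^2\mathcal{I}_{p,\varepsilon}^*\Theta_p$, respectively. Using the factorization $\mathcal{I}_{p,\varepsilon}={\bf H}_{\varepsilon^{-1}}\circ\mathcal{I}_{p,1}$ and the homotopy equivalence $U(p,\varepsilon^{1/2})\hookrightarrow U(p,t)$ (both deformation retract onto $\pi^{-1}(p)$), the $H^2(M_p)$-component of each class equals $\varepsilon^2\,{\bf H}_{\varepsilon^{-1}}^*[\,\cdot\,]$ applied to the corresponding class on $M_p$. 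The essential observation is that ${\bf H}_\lambda$ is smoothly isotopic to the identity in ${\rm Diff}(M_p)$ for every $\lambda\in\C^\times$ via the continuous family $s\mapsto{\bf H}_{\lambda^s}$, so ${\bf H}_\lambda^*$ acts as the identity on $H^2(M_p,\R)$. This yields the components $\varepsilon^2[\eta_p]$, $\varepsilon^2[c_1(A_{p,i})]$, $0$ (because $\sqrt{-1}\ddb\nu_{p,\alpha}=d(\sqrt{-1}\delb\nu_{p,\alpha})$ is globally exact on $M_p$), and $\varepsilon^2[\Theta_p]$.

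The remaining step, which I expect to be the main obstacle, is the vanishing $[\Theta_p]=0\in H^2(M_p,\R)$. I would prove this by pairing with $H_2(M_p,\Z)$, which is generated by the irreducible components of the exceptional divisor $\varpi_p^{-1}(0)$; each such component is a smoothly embedded holomorphic $\C P^1$. Since $\Theta_p$ is of type $(2,0)$, its pullback to any complex one-dimensional submanifold vanishes, so $\int_E\Theta_p=0$ for every generator $E$, and the universal coefficient theorem forces $[\Theta_p]=0$. Given this, all five formulas follow; the remainder of the proof is bookkeeping between the two biholomorphisms $\mathcal{I}_{p,\varepsilon}$ and $\mathcal{I}_{p,1}=i_t$.
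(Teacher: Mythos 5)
Your argument is correct and follows essentially the same route as the paper's proof: decompose $H^2(X)$ via the Mayer--Vietoris isomorphism $\mathcal{J}$, read off the $H^2(\T^4_\Lambda)^\Gamma$-component from the region where each form equals a pullback from the torus, and identify the $H^2(M_p)$-components using that $\mathbf{H}_{\varepsilon^{-1}}^*$ acts trivially on cohomology, that $\sqrt{-1}\ddb\nu_{p,\alpha}$ is exact, and that the $(2,0)$-form $\Omega$ integrates to zero over the $(-2)$-curves generating $H_2(M_p,\Z)$. The only substantive difference is that you make explicit the isotopy $s\mapsto \mathbf{H}_{\lambda^s}$ justifying $\mathbf{H}_\lambda^*=\mathrm{id}$ on $H^2(M_p,\R)$, which the paper leaves implicit.
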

\begin{proof}
Let $j_t\colon \T\setminus \bigcup_{p\in S}B_\T(p,t))\to \T\setminus S$ be the inclusion map. By a similar argument to show that $i_t^*$ is an isomorphism, we can see that $j_t^*\colon H^2(\T\setminus S)\to H^2(\T\setminus \bigcup_{p\in S}B_\T(p,t))$ is an isomorphism for sufficiently small $t>0$. 
By the construction of $\omega_\varepsilon$, we can see $j_t^*\omega_\varepsilon=j_t^*\pi^*\omega_0$ for $t\ge 2\varepsilon^{1/2}$. Therefore, we have $[\omega_\varepsilon]|_{\T\setminus S}=[\omega_0]|_{\T\setminus S}$. If we take $t\le \varepsilon^{1/2}$, then $i_t^*\omega_\varepsilon=\varepsilon^2i_t^*\mathcal{I}_{p,\varepsilon}^*\eta_p=\varepsilon^2i_t^*\mathbf{H}_{\varepsilon^{-1}}^*\eta_p$. Therefore, $i_t^*[\omega_\varepsilon]=\varepsilon^2i_t^*[\eta_p]$, which gives $\mathcal{J}(\omega_\varepsilon)=[\omega_0]+\varepsilon^2\sum_{p\in S}[\eta_p]$. We can show $\mathcal{J}([\Xi'_{p,i,\varepsilon}])=\varepsilon^2c_1(A_{p,i})$ in the same way. Moreover, $\mathcal{J}([\omega^-_{\alpha,\varepsilon}])=[\omega^-_\alpha]$ follows since $\omega^-_{\alpha,\varepsilon}|_{U(p,\varepsilon^{1/2})}$ is a exact form. To show $\mathcal{J}([{\rm Re}(\Omega)])=[{\rm Re}(\Omega_0)]$ and $
\mathcal{J}([{\rm Im}(\Omega)])=[{\rm Im}(\Omega_0)]$, it suffices to see that $[\Omega|_{U(p,t)}]=0\in H^2(U(p,t))$ for sufficiently small $t>0$. By the argument in \cite{KN1990instanton}, we may write $\varpi_p^{-1}(0)=\bigcup_i E_i$, where $E_i\subset M_p$ is a $(-2)$-curve and $[E_1],\ldots,[E_{N_{\Gamma_p}}]\in H_2(M_p,\Z)$ form a dual basis of $c_1(A_{p,1}),\ldots,c_1(A_{p,N_{\Gamma_p}})\in H^2(M_p,\Z)$. Since $\Omega$ is a $(2,0)$-form, we have $\int_{E_i}\Omega=0$ for all $i$, which implies $[\Omega|_{U(p,t)}]=0$. 
\end{proof}

Next, we compute the cup product on $H^2(X)$. For closed $2$-forms $\alpha,\beta\in \Omega^2(X)$, we put $[\alpha]\cdot[\beta]:=\int_X\alpha\wedge\beta$. We put 
\begin{align*}
H_+^2(\T^4_\Lambda)&:={\rm span}\{ [\omega_0],[{\rm Re}(\Omega_0)], [{\rm Im}(\Omega_0)]\},\\
H_-^2(\T^4_\Lambda)^\Gamma&:=\{ [\alpha]|\, \alpha\in  (V^-)^\Gamma\},
\end{align*}
then we have the decomposition $H^2(\T^4_\Lambda)^\Gamma=H_+^2(\T^4_\Lambda)\oplus
H_-^2(\T^4_\Lambda)^\Gamma$.
\begin{prop}\label{prop cup prod}
We have 
\begin{align*}
[\omega_0]_X^2&=[{\rm Re}(\Omega_0)]_X^2=[{\rm Im}(\Omega_0)]_X^2={\rm vol}(\T),\\
[\omega_0]_X\cdot[\Omega_0]_X&=[{\rm Re}(\Omega_0)]_X\cdot[{\rm Im}(\Omega_0)]_X=0,\\
[c_1(A_{p,i})]_X\cdot[\Omega_0]_X
&=[\omega^-_\alpha]_X\cdot[\Omega_0]_X=0,\\
[\omega^-_\alpha]_X\cdot[\omega^-_\beta]_X
&= -\delta_{\alpha\beta}{\rm vol}(\T),\\
[c_1(A_{p,i})]_X\cdot[c_1(A_{p,j})]_X &= \int_{M_p}c_1(A_{p,i})\wedge c_1(A_{p,j}),\\
[c_1(A_{p,i})]_X\cdot[c_1(A_{q,j})]_X&=0\quad(p\neq q),\\
[c_1(A_{p,i})]_X\cdot[\omega^-_\alpha]_X&=0,\\
[\omega_0]_X\cdot[c_1(A_{p,i})]_X &= [\omega_0]_X\cdot[\omega^-_\alpha]_X =0.
\end{align*}
\end{prop}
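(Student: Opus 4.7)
The plan is to compute each cup product by integrating a convenient pair of closed $2$-form representatives provided by Proposition \ref{prop decomp H2X}, exploiting the decomposition $\mathcal{J}$ of $H^2(X)$ in \eqref{eq def J}. The proof splits naturally into three groups of identities: vanishings forced by Hodge type, cross-intersections between $\mathcal{J}^{-1}(H^2(\T^4_\Lambda)^\Gamma)$ and $\mathcal{J}^{-1}(H^2(M_p))$, and self-intersections within each of these subspaces. Throughout I use that $\omega_0$, $\omega^-_\alpha$, $\eta_p$, and $c_1(A_{p,i})$ are all of type $(1,1)$, while $\Omega$ is the global $(2,0)$-form on $X$.

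Since a complex surface has no $(3,1)$-forms, any $(1,1)$-form wedges to zero with $\Omega$. Choosing the type-$(1,1)$ representatives $\omega_\varepsilon$, $\omega^-_{\alpha,\varepsilon}$, and $\Xi'_{p,i,\varepsilon}$ and taking real and imaginary parts of $[\Omega_0]_X$ immediately gives every vanishing in which $[\Omega_0]_X$, $[{\rm Re}(\Omega_0)]_X$, or $[{\rm Im}(\Omega_0)]_X$ is paired with a class built from $\omega_0$, $\omega^-_\alpha$, or $c_1(A_{p,i})$. The vanishing $[{\rm Re}(\Omega_0)]_X\cdot[{\rm Im}(\Omega_0)]_X=0$ then follows from the reality of $\Omega\wedge\bar{\Omega}={\rm Re}(\Omega)^2+{\rm Im}(\Omega)^2-2\sqrt{-1}\,{\rm Re}(\Omega)\wedge{\rm Im}(\Omega)$.

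For the cross-intersections, I will use a disjoint-support argument. By definition of $\mathcal{J}$, each of the classes $[\omega_0]_X$ and $[\omega^-_\alpha]_X$ restricts to $0$ in $H^2(U(p,t))\cong H^2(M_p)$. Since $H^2(S^3/\Gamma_p)=0$, a primitive of $\pi^*\omega_0$ or $\pi^*\omega^-_\alpha$ exists on the tube $U(p,t)\setminus\overline{U(p,t/4)}$, and multiplying by a cutoff produces a cohomologous closed $2$-form $\tilde{\alpha}$ on $X$ supported in $X\setminus\bigcup_pU(p,t/4)$. For $\varepsilon$ small enough, $\Xi'_{p,i,\varepsilon}$ is supported in $U(p,t/4)$, so $\tilde{\alpha}\wedge\Xi'_{p,i,\varepsilon}\equiv 0$ and the cup product vanishes. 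The same principle applied to $\Xi'_{p,i,\varepsilon}$ and $\Xi'_{q,j,\varepsilon}$, which have disjoint supports when $p\neq q$ and $\varepsilon$ is small, gives $[c_1(A_{p,i})]_X\cdot[c_1(A_{q,j})]_X=0$.

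For the intra-subspace computation on the torus side, the same cutoff construction represents $[\omega_0]_X$ and $[\omega^-_\alpha]_X$ by forms agreeing with their torus pullbacks outside $\bigcup U(p,t/2)$ and vanishing inside $U(p,t/4)$. On the annulus the representatives are uniformly bounded while its volume is $O(t^4)$, so letting $t\to 0$ yields $[\alpha]_X\cdot[\beta]_X=\int_\T\alpha\wedge\beta$ for $\alpha,\beta\in H^2(\T^4_\Lambda)^\Gamma$; the pointwise identities $\omega_0^2={\rm Re}(\Omega_0)^2={\rm Im}(\Omega_0)^2$ and $\omega^-_\alpha\wedge\omega^-_\beta=-\delta_{\alpha\beta}\omega_0^2$ from the hyper-K\"ahler structure then deliver all torus-side values with ${\rm vol}(\T):=\int_\T\omega_0^2$. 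The main obstacle is the ALE self-intersection $[c_1(A_{p,i})]_X\cdot[c_1(A_{p,j})]_X=\int_{M_p}c_1(A_{p,i})\wedge c_1(A_{p,j})$: I compute it as the $\varepsilon$-independent number $\varepsilon^{-4}\int_X\Xi'_{p,i,\varepsilon}\wedge\Xi'_{p,j,\varepsilon}$ and pass to the limit $\varepsilon\to 0$. On $\overline{U(p,\varepsilon^{1/2})}$ the integrand equals $\varepsilon^4\mathcal{I}_{p,\varepsilon}^*(c_1(A_{p,i})\wedge c_1(A_{p,j}))$, so this piece contributes $\int_{\varpi_p^{-1}(\overline{B_{\Gamma_p}(\varepsilon^{-1/2})})}c_1(A_{p,i})\wedge c_1(A_{p,j})$, which converges to $\int_{M_p}c_1(A_{p,i})\wedge c_1(A_{p,j})$ thanks to the integrable $O(r^{-8})$ decay coming from $({\rm iv})$ of Proposition \ref{prop instanton}. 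The gluing-annulus contribution is $O(\varepsilon^2)$ after the $\varepsilon^{-4}$ normalization, using the bound $|\Xi'_{p,i,\varepsilon}|_{g_\varepsilon}\lesssim\varepsilon^2$ from Proposition \ref{prop decay bubble 2 forms} and the $O(\varepsilon^2)$ volume of $A(p,\varepsilon^{1/2})$.
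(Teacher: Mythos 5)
Your proposal is correct in substance and reaches every identity in the proposition, but it routes several of the computations differently from the paper. For the ALE self-intersections $[c_1(A_{p,i})]_X\cdot[c_1(A_{p,j})]_X$ and the disjoint-support vanishing for $p\neq q$ you follow essentially the paper's argument: write the product as the $\varepsilon$-independent number $\varepsilon^{-4}\int_X\Xi'_{p,i,\varepsilon}\wedge\Xi'_{p,j,\varepsilon}$, split over $U(p,\varepsilon^{1/2})$ and the gluing annulus, and use the $O(r^{-8})$ decay plus the annulus bounds to identify the limit. Where you genuinely diverge is in the cross terms $[\omega_0]_X\cdot[c_1(A_{p,i})]_X$, $[\omega^-_\alpha]_X\cdot[c_1(A_{p,i})]_X$ and the torus-side products: you replace the classes coming from $H^2(\T^4_\Lambda)^\Gamma$ by cohomologous representatives that vanish on $U(p,t/4)$ and then either invoke disjoint supports or let $t\to 0$, whereas the paper keeps the glued representatives $\omega_\varepsilon,\omega^-_{\alpha,\varepsilon},\Xi'_{p,i,\varepsilon}$ and combines pointwise (anti-)self-duality outside the annuli with $O(\varepsilon^2)$ annulus estimates and $\varepsilon$-independence. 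Your route is arguably more elementary (pure topology plus a volume estimate, no curvature input), while the paper's reuses the decay estimates of Propositions \ref{prop decay bubble 2 forms} and \ref{prop decay outside 2 forms} that are needed later anyway.

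Two small points to tighten. First, your expansion of $\Omega\wedge\bar{\Omega}$ is off: since $2$-forms commute, the cross terms cancel and $\Omega\wedge\bar{\Omega}={\rm Re}(\Omega)^2+{\rm Im}(\Omega)^2$ with no imaginary part to exploit; the identity you actually need, ${\rm Re}(\Omega)\wedge{\rm Im}(\Omega)=0$ (together with ${\rm Re}(\Omega)^2={\rm Im}(\Omega)^2$), follows instead from $\Omega\wedge\Omega=0$, which holds because $(4,0)$-forms vanish on a surface. Second, to kill a representative of $[\omega_0]_X$ on all of $U(p,t/4)$ you need a primitive on the whole tube $U(p,t)$, not merely on the annulus; this is exactly what the vanishing of the $H^2(M_p)$-component of $\mathcal{J}([\omega_0]_X)$ gives you (or, concretely, $\pi^*\omega_0=\sqrt{-1}\ddb(\pi^*r_p^2/2)$ and $\omega^-_\alpha=\sqrt{-1}\ddb\nu_{0,\alpha}$ on the tube), so the citation of $H^2(S^3/\Gamma_p)=0$ is not the operative fact. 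Relatedly, for $[\omega_0]_X\cdot[\Omega_0]_X=0$ note that the available type-$(1,1)$ glued representative computes $([\omega_0]_X+\varepsilon^2Q)\cdot[\Omega_0]_X$, so you should either separate the two terms by $\varepsilon$-independence (as the paper does, after first establishing $[c_1(A_{p,i})]_X\cdot[\Omega_0]_X=0$, which forces $Q\cdot[\Omega_0]_X=0$) or use your cutoff $(1,1)$-representative of $[\omega_0]_X$ itself. None of these affects the validity of the argument.
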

\begin{proof}
We can see $[\omega_\varepsilon]\cdot[\Omega]=[{\rm Re}(\Omega)]\cdot[{\rm Im}(\Omega)]=0$ and $[\Xi'_{p,i,\varepsilon}]\cdot[\Omega]=[\omega^-_{\alpha,\varepsilon}]\cdot[\Omega]=0$. Therefore, we have 
\begin{align*}
\left( [\omega_0]_X+\varepsilon^2Q\right)\cdot[\Omega_0]_X&=[{\rm Re}(\Omega_0)]_X\cdot[{\rm Im}(\Omega_0)]_X=0,\\
[c_1(A_{p,i})]_X\cdot[\Omega_0]_X
&=[\omega^-_\alpha]_X\cdot[\Omega_0]_X=0,
\end{align*}
where $Q=\sum_p[\eta_p]_X$. Since the above equality holds for all $\varepsilon$, we also have 
$[\omega_0]_X\cdot[\Omega_0]_X=Q\cdot[\Omega_0]_X=0$. Moreover, we can check $[\omega_0]_X^2=\lim_{\varepsilon\to 0}[\omega_\varepsilon]^2=\int_{\T\setminus S}\omega_0^2={\rm vol}(\T)=[{\rm Re}(\Omega_0)]_X^2=[{\rm Im}(\Omega_0)]_X^2$.
It is easy to see $[\Xi'_{p,i,\varepsilon}]\cdot[\Xi'_{q,j,\varepsilon}]=0$ if $p\neq q$ and $\varepsilon\ll 1$, hence $[c_1(A_{p,i})]_X\cdot[c_1(A_{q,j})]_X=0$.

Next, we have 
\begin{align*}
[c_1(A_{p,i})]_X\cdot[c_1(A_{p,j})]_X&=\varepsilon^{-4}\int_X\Xi'_{p,i,\varepsilon}\wedge \Xi'_{p,j,\varepsilon}\\
&=\varepsilon^{-4}\left( \int_{U(p,\varepsilon^{1/2})}\Xi'_{p,i,\varepsilon}\wedge \Xi'_{p,j,\varepsilon}+\int_{A(p,\varepsilon^{1/2})}\Xi'_{p,i,\varepsilon}\wedge \Xi'_{p,j,\varepsilon}\right).
\end{align*}
Now, we have 
\begin{align*}
\int_{U(p,\varepsilon^{1/2})}\Xi'_{p,i,\varepsilon}\wedge \Xi'_{p,j,\varepsilon}
&=\varepsilon^4\int_{\varpi_p^{-1}(B_{\Gamma_p}(\varepsilon^{-1/2}))}c_1(A_{p,i})\wedge c_1(A_{p,j})\\
&=\varepsilon^4\bigg( \int_{M_p}c_1(A_{p,i})\wedge c_1(A_{p,j}) \\
&\quad\quad
- \int_{M_p\setminus \varpi_p^{-1}(B_{\Gamma_p}(\varepsilon^{-1/2}))}c_1(A_{p,i})\wedge c_1(A_{p,j})\bigg),
\end{align*}
and by Proposition \ref{prop decay bubble 2 forms} and $({\rm iv})$ of Proposition \ref{prop instanton}, we have 
\begin{align*}
\left|\int_{M_p\setminus \varpi_p^{-1}(B_{\Gamma_p}(\varepsilon^{-1/2}))}c_1(A_{p,i})\wedge c_1(A_{p,j})\right|
&\lesssim \int_{\varepsilon^{-1/2}}^\infty r^{-8}\cdot r^3dr\lesssim \varepsilon^2\\
\left|\int_{A(p,\varepsilon^{1/2})}\Xi'_{p,i,\varepsilon}\wedge \Xi'_{p,j,\varepsilon}\right|
&\lesssim \varepsilon^6.
\end{align*}
Therefore, 
\begin{align*}
\left|[c_1(A_{p,i})]_X\cdot[c_1(A_{p,j})]_X - \int_{M_p}c_1(A_{p,i})\wedge c_1(A_{p,j})\right|&\lesssim \varepsilon^2.
\end{align*}
Since the left-hand side is independent of $\varepsilon$, it should be $0$. It is easy to check that 
\begin{align*}
\lim_{\varepsilon\to 0}[\omega^-_{\alpha,\varepsilon}]\cdot[\omega^-_{\beta,\varepsilon}]
&= \int_{\T\setminus S}\omega^-_\alpha\wedge\omega^-_\beta,
\end{align*}
by Proposition \ref{prop decay outside 2 forms},
hence $[\omega^-_\alpha]_X\cdot[\omega^-_\beta]_X
= -\delta_{\alpha\beta}{\rm vol}(\T)$. 
Since 
\begin{align*}
[c_1(A_{p,i})]\cdot[\omega^-_\alpha]
&=\varepsilon^{-2}\int_X\Xi'_{p,i,\varepsilon}\wedge\omega^-_{\alpha,\varepsilon}\\
&=\varepsilon^{-2}\int_{U(p,2\varepsilon^{1/2})}\Xi'_{p,i,\varepsilon}\wedge\omega^-_{\alpha,\varepsilon},
\end{align*}
then, by Propositions \ref{prop decay bubble 2 forms} and  \ref{prop decay outside 2 forms}, we have 
\begin{align*}
\left|\int_{A(p,\varepsilon^{1/2})}\Xi'_{p,i,\varepsilon}\wedge\omega^-_{\alpha,\varepsilon}\right|
&\lesssim \varepsilon^4,\\
\left|\int_{U(p,\varepsilon^{1/2})}\Xi'_{p,i,\varepsilon}\wedge\omega^-_{\alpha,\varepsilon}\right|
&\lesssim \varepsilon^8\int_\varepsilon^{\varepsilon^{1/2}}r^{-5}dr+\varepsilon^4\lesssim \varepsilon^4.
\end{align*}
Therefore, 
\begin{align*}
|[c_1(A_{p,i})]_X\cdot[\omega^-_\alpha]_X|\lesssim\varepsilon^2.
\end{align*}
Since $[c_1(A_{p,i})]\cdot[\omega^-_\alpha]$ does not depend on $\varepsilon$, we have $[c_1(A_{p,i})]\cdot[\omega^-_\alpha]=0$. Next, we consider $([\omega_0]+\varepsilon^2Q)\cdot[\omega^-_\alpha]$. 
Since $\omega_\varepsilon$ is self-dual outside $A(p,\varepsilon)$ and $\omega^-_{\alpha,\varepsilon}$ is anti-self-dual outside $A(p,\varepsilon)$ by Lemma \ref{lem ASD}, we can see 
\begin{align*}
\left|([\omega_0]_X+\varepsilon^2Q)\cdot[\omega^-_\alpha]_X\right|
\le\sum_{p\in S}\left|\int_{A(p,\varepsilon^{1/2})}\omega_\varepsilon\wedge\omega^-_{\alpha,\varepsilon}\right|\lesssim \varepsilon^2,
\end{align*}
hence $[\omega_0]_X\cdot[\omega^-_\alpha]_X=0$. 
We can also see $\omega_\varepsilon\wedge\Xi'_{p,i\varepsilon}=0$ outside $A(p,\varepsilon^{1/2})$, we have 
\begin{align*}
\left|([\omega_0]_X+\varepsilon^2Q)\cdot[c_1(A_{p,i})]_X\right|
\le\varepsilon^{-2}\sum_{p\in S}\left|\int_{A(p,\varepsilon^{1/2})}\omega_\varepsilon\wedge\Xi'_{p,i,\varepsilon}\right|\lesssim \varepsilon^2.
\end{align*}
As $\varepsilon\to 0$, we have $[\omega_0]_X\cdot[c_1(A_{p,i})]_X = 0$.
\end{proof}
By $({\rm i})({\rm ii})$ of Proposition \ref{prop instanton}, the $N_{\Gamma_p}\times N_{\Gamma_p}$-matrix 
\begin{align*}
\left( L_{p,ij}\right)_{i,j}:=\left( \int_{M_p}c_1(A_{p,i})\wedge *_{\eta_p}c_1(A_{p,j})\right)_{i,j}
\end{align*}
is symmetric and positive definite. Since $c_1(A_{p,j})$ is anti-self dual, we may write 
\begin{align*}
[c_1(A_{p,i})]_X\cdot[c_1(A_{p,j})]_X &= -L_{p,ij}.
\end{align*}
Let $(L_p^{ij})_{i,j}$ be the inverse matrix of $(L_{p,ij})_{i,j}$. By Theorem \ref{thm ASD on ALE}, $[c_1(A_{p,1})],\ldots,[c_1(A_{p,\Gamma_p})]$ form a basis of $H^2(M_p)$. 
Since $[\eta_p]\in H^2(M_p)$, we can take $\eta_p^i \in \R$ such that 
\begin{align*}
[\eta_p]=\sum_{i=1}^{N_{\Gamma_p}}\eta_p^i [c_1(A_{p,i})]\in H^2(M_p).
\end{align*}
Then we have 
\begin{align*}
[\eta_p]_X\cdot [c_1(A_{p,i})]_X&=-\sum_{j=1}^{N_{\Gamma_p}}\eta_p^j L_{p,ji},
\end{align*}
and if we put $Q=\sum_{p\in S}[\eta_p]_X$, then 
\begin{align*}
Q^2&=-\sum_{p\in S}\sum_{i,j=1}^{N_{\Gamma_p}}\eta_p^i\eta_p^jL_{p,ij}.
\end{align*}
\begin{rem}
\normalfont
Here, the constants $L_{p,ij}$ are determined topologically, do not depend on the Ricci-flat K\"ahler metric $\eta_p$ by the following argument. We have $\gamma_{p,i}\in\Omega^1(M_p\setminus\varpi_p^{-1}(0))$ such that $c_1(A_{p,i})=d\gamma_{p,i}$ by Proposition \ref{prop instanton}. Let $\chi_R\in C^\infty(M_p)$ be a smooth cut-off function such that ${\rm supp}(\chi_R)\subset \varpi_p^{-1}(B_{\Gamma_p}(R+1))$ and $\chi_R|_{\varpi_p^{-1}(B_{\Gamma_p}(R))}\equiv 1$. Put $F_{p,i,R}:=d(\chi_R\gamma_{p,i})$. Then a compactly supported cohomology class $[F_{p,i,R}]_c\in H^2_c(M_p)$ does not depend on $R$, hence $\int_{M_p}F_{p,i,R}\wedge F_{p,j,R}$ is also independent of $R$. Moreover, by the estimates in Proposition \ref{prop instanton}, we have $\lim_{R\to \infty}\int_{M_p}F_{p,i,R}\wedge F_{p,j,R}=\int_{M_p}c_1(A_{p,i})\wedge c_1(A_{p,j})=-L_{p,ij}$. Therefore, $L_{p,ij}$ are determined by $[F_{p,i}]_c,[F_{p,j}]_c$. 
\end{rem}


Recall that $W_\varepsilon=2[\omega_\varepsilon]^2/[\Omega][\overline{\Omega}]$. By Proposition \ref{prop cup prod}, we may write 
\begin{align*}
W_\varepsilon=\frac{([\omega_0]_X+\varepsilon^2Q)^2}{[\omega_0]_X^2}=1+\varepsilon^4\frac{Q^2}{{\rm vol}(\T)}.
\end{align*}
Here, we have $[\omega_0]_X\cdot Q=0$ since $Q$ is written as a linearly combination of $\{ c_1(A_{p,i})\}_{p,i}$. Therefore, 
\begin{align}
[\tilde{\omega}_\varepsilon]=W_\varepsilon^{-1/2}[\omega_\varepsilon]=\frac{[\omega_0]_X+\varepsilon^2Q}{\sqrt{1+\varepsilon^4Q^2/{\rm vol}(\T)}}.\label{eq RFK class}
\end{align}


\section{Main result}\label{sec main}
Let $\tilde{\omega}_\varepsilon$ be the Ricci-flat K\"ahler forms obtained by Theorem \ref{thm RFK and appro}. Denote by $\mathcal{H}^2_{\tilde{\omega}_\varepsilon}$ the space of harmonic $2$-forms on $X$ with respect to the K\"ahler metric $\tilde{\omega}_\varepsilon$. Moreover, we have the orthogonal decomposition $\mathcal{H}^2_{\tilde{\omega}_\varepsilon}=\mathcal{H}^2_{\tilde{\omega}_\varepsilon,+}\oplus \mathcal{H}^2_{\tilde{\omega}_\varepsilon,-}$, where $\mathcal{H}^2_{\tilde{\omega}_\varepsilon,\pm}=\{ \zeta\in\mathcal{H}^2_{\tilde{\omega}_\varepsilon}|\, *\zeta=\pm \zeta\}$, respectively.
\begin{thm}\label{thm main 7}
Let $d_\Gamma=\dim (V^-)^\Gamma$ and $\omega^-_1,\ldots,\omega^-_{d_\Gamma}$ be the orthonormal basis of $(V^-)^\Gamma$. 
There is a family of closed $2$-forms on $X$ 
\begin{align*}
\left( \tilde{\omega}_\varepsilon,{\rm Re}(\Omega),{\rm Im}(\Omega),(\tilde{\omega}^-_{\alpha,\varepsilon})_{\alpha=1,\ldots,d_\Gamma},(\tilde{\Xi}_{p,i,\varepsilon})_{p\in S,i=1,\ldots,N_{\Gamma_p}}\right)_{0<\varepsilon\le \varepsilon_0}
\end{align*}
satisfying the following properties. 
\begin{itemize}
\setlength{\parskip}{0cm}
\setlength{\itemsep}{0cm}
 \item[$({\rm i})$] We have 
\begin{align*}
[\tilde{\omega}_\varepsilon]&=\frac{[\omega_0]_X+\varepsilon^2Q}{\sqrt{1+\varepsilon^4Q^2/{\rm vol}(\T)}},\\
[{\rm Re}(\Omega)]&=[{\rm Re}(\Omega_0)]_X,\quad
[{\rm Im}(\Omega)])=[{\rm Im}(\Omega_0)]_X,\\
[\tilde{\Xi}_{p,i,\varepsilon}]&=[c_1(A_{p,i})]_X-\frac{\varepsilon^2[c_1(A_{p,i})]_X\cdot Q}{{\rm vol}(\T)}[\omega_0]_X,\\
[\tilde{\omega}^-_{\alpha,\varepsilon}]&=[\omega^-_\alpha]_X.
\end{align*}
In particular, the cohomology classes $[\tilde{\omega}^-_{\alpha,\varepsilon}]$ are independent of $\varepsilon$.
 \item[$({\rm ii})$] $\tilde{\omega}_\varepsilon$ satisfies the Monge-Amp\`ere equation $\tilde{\omega}_\varepsilon^2=\frac{1}{2}\Omega\wedge\bar{\Omega}$. In particular, $\tilde{\omega}_\varepsilon/\sqrt{\vol(\T)},{\rm Re}(\Omega)/\sqrt{\vol(\T)},{\rm Im}(\Omega)/\sqrt{\vol(\T)}$ form an orthonormal basis of $\mathcal{H}^2_{\tilde{\omega}_\varepsilon,+}$.
 \item[$({\rm iii})$] $\tilde{\omega}^-_{\alpha,\varepsilon},\tilde{\Xi}_{p,i,\varepsilon}$ form a basis of $\mathcal{H}^2_{\tilde{\omega}_\varepsilon,-}$. Moreover, their $L^2$-inner products are given by 
\begin{align*}
\langle \tilde{\Xi}_{p,i,\varepsilon},\tilde{\Xi}_{q,j,\varepsilon}\rangle_{\tilde{g}_\varepsilon}&=\frac{\varepsilon^4\{ [c_1(A_{p,i})]_X\cdot Q\}
\{ [c_1(A_{q,j})]_X\cdot Q\}}{\vol(\T)}\quad(p\neq q),\\
\langle \tilde{\Xi}_{p,i,\varepsilon},\tilde{\Xi}_{p,j,\varepsilon}\rangle_{\tilde{g}_\varepsilon}&=L_{p,ij}+\frac{\varepsilon^4\{ [c_1(A_{p,i})]_X\cdot Q\}\{ [c_1(A_{p,j})]_X\cdot Q\}}{\vol(\T)},\\
\langle \tilde{\omega}^-_{\alpha,\varepsilon},\tilde{\Xi}_{p,i,\varepsilon}\rangle_{\tilde{g}_\varepsilon}&=0,\\
\langle \tilde{\omega}^-_{\alpha,\varepsilon},\tilde{\omega}^-_{\beta,\varepsilon}\rangle_{\tilde{g}_\varepsilon}&=\delta_{\alpha\beta}{\rm vol}(\T).
\end{align*}
 \item[$({\rm iv})$] As $\varepsilon\to 0$, we have the following convergence in the sense of Definition \ref{def conv bubble},
\begin{align*}
&\tilde{\omega}_\varepsilon\stackrel{\T\setminus S}{\rightarrow}\omega_0,\quad\varepsilon^{-2}\tilde{\omega}_\varepsilon\stackrel{S}{\rightarrow}(\eta_p)_{p\in S},\\
&\tilde{\omega}^-_{\alpha,\varepsilon}\stackrel{\T\setminus S}{\rightarrow}\omega^-_\alpha,\quad\varepsilon^{-2}\tilde{\omega}^-_{\alpha,\varepsilon}\stackrel{S}{\rightarrow}(\sqrt{-1}\ddb\nu_{p,\alpha})_{p\in S},\\
&\tilde{\Xi}_{p,i,\varepsilon}\stackrel{\T\setminus S}{\rightarrow}0,\quad\tilde{\Xi}_{p,i,\varepsilon}\stackrel{S}{\rightarrow}(c_1(A_{p,i}),(0)_{p'\in S\setminus\{ p\}}).
\end{align*}
\end{itemize}
\end{thm}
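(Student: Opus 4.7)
The plan is to realize $\tilde{\omega}^-_{\alpha,\varepsilon}$ and $\tilde{\Xi}_{p,i,\varepsilon}$ as the unique $\tilde{g}_\varepsilon$-harmonic representatives of the cohomology classes prescribed in $({\rm i})$, obtained by a $\sqrt{-1}\ddb$-correction of the approximating closed forms built in Sections~\ref{sec ASD ALE} and~\ref{sec ASD torus}. Concretely I take the ansatz
\begin{align*}
\tilde{\omega}^-_{\alpha,\varepsilon}&=\omega^-_{\alpha,\varepsilon}+\sqrt{-1}\ddb\phi_{\alpha,\varepsilon},\\
\tilde{\Xi}_{p,i,\varepsilon}&=\varepsilon^{-2}\Xi'_{p,i,\varepsilon}-c_{p,i}\omega'_\varepsilon+\sqrt{-1}\ddb\psi_{p,i,\varepsilon},
\end{align*}
with $c_{p,i}=\varepsilon^2([c_1(A_{p,i})]_X\cdot Q)/\vol(\T)$ and $\omega'_\varepsilon$ a closed $(1,1)$-form representing $[\omega_0]_X$; a convenient choice is $\omega'_\varepsilon:=\omega_\varepsilon-\sum_{q\in S}\widehat{\eta}_{q,\varepsilon}$, where $\widehat{\eta}_{q,\varepsilon}$ is a cutoff-glued closed representative of $\varepsilon^2[\eta_q]_X$ equal to $\varepsilon^2\mathcal{I}_{q,\varepsilon}^*\eta_q$ on $U(q,\varepsilon^{1/2})$ and vanishing on $X\setminus U(q,2\varepsilon^{1/2})$, so that $\omega'_\varepsilon$ vanishes near each bubble and equals $\pi^*\omega_0$ on $X\setminus\bigcup_qU(q,2\varepsilon^{1/2})$. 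Requiring the two forms to be primitive with respect to $\tilde{\omega}_\varepsilon$ (equivalently, anti-self-dual) reduces to
\[
L_{\tilde{\omega}_\varepsilon}\phi_{\alpha,\varepsilon}=-\Lambda_{\tilde{\omega}_\varepsilon}\omega^-_{\alpha,\varepsilon},\qquad
L_{\tilde{\omega}_\varepsilon}\psi_{p,i,\varepsilon}=-\Lambda_{\tilde{\omega}_\varepsilon}(\varepsilon^{-2}\Xi'_{p,i,\varepsilon}-c_{p,i}\omega'_\varepsilon),
\]
each of which is solvable because its integral vanishes by $\int_X(\Lambda_{\tilde{\omega}_\varepsilon}\alpha)\tilde{\omega}_\varepsilon^2=2\int_X\alpha\wedge\tilde{\omega}_\varepsilon$ reducing to the cohomological identities $[\omega^-_\alpha]_X\cdot[\tilde{\omega}_\varepsilon]=0$ and $([c_1(A_{p,i})]_X-c_{p,i}[\omega_0]_X)\cdot[\tilde{\omega}_\varepsilon]=0$, verified from Proposition~\ref{prop cup prod} and the choice of $c_{p,i}$.

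Once $\phi_{\alpha,\varepsilon},\psi_{p,i,\varepsilon}$ exist, $({\rm i})$ is immediate from the cohomology classes of the ansatz combined with \eqref{eq RFK class}, and $({\rm ii})$ is Theorem~\ref{thm RFK and appro}. For $({\rm iii})$, anti-self-duality is built in; the basis property follows from the Mayer--Vietoris decomposition~\eqref{eq def J}, which yields $b_2(X)=3+d_\Gamma+\sum_{p\in S}N_{\Gamma_p}$, combined with $\dim\mathcal{H}^2_{\tilde{\omega}_\varepsilon,+}=3$ from $({\rm ii})$ and the linear independence of the underlying cohomology classes. The $L^2$-inner products then come from the general identity $\langle\alpha,\beta\rangle_{\tilde{g}_\varepsilon}=-\int_X\alpha\wedge\beta=-[\alpha]\cdot[\beta]$ for anti-self-dual harmonic $\alpha,\beta$, combined with Proposition~\ref{prop cup prod}; in particular $\langle\tilde{\omega}^-_{\alpha,\varepsilon},\tilde{\Xi}_{p,i,\varepsilon}\rangle=0$ because both $[\omega^-_\alpha]_X\cdot[c_1(A_{p,i})]_X$ and $[\omega^-_\alpha]_X\cdot[\omega_0]_X$ vanish.

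The main content is $({\rm iv})$: the $\sqrt{-1}\ddb$-corrections must be negligible in the sense of Definition~\ref{def conv bubble}. Observe that on $U(p,\varepsilon^{1/2})$ the form $\omega^-_{\alpha,\varepsilon}$ is primitive with respect to $\omega_\varepsilon=\varepsilon^2\mathcal{I}_{p,\varepsilon}^*\eta_p$ by Lemma~\ref{lem ASD}, $\varepsilon^{-2}\Xi'_{p,i,\varepsilon}$ is primitive there by Proposition~\ref{prop instanton}$({\rm i})$, and $\omega'_\varepsilon$ vanishes there; on $X\setminus\bigcup_qU(q,2\varepsilon^{1/2})$ the forms $\omega^-_{\alpha,\varepsilon}=\pi^*\omega^-_\alpha$ and $\omega'_\varepsilon=\pi^*\omega_0$ satisfy $\omega^-_\alpha\wedge\omega_0=0$ while $\varepsilon^{-2}\Xi'_{p,i,\varepsilon}=0$. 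Hence both right-hand sides concentrate on the gluing annuli $A(p,\varepsilon^{1/2})$, and Propositions~\ref{prop decay bubble 2 forms}, \ref{prop decay outside 2 forms}, together with Proposition~\ref{prop Ck comparison} to pass from $g_\varepsilon$ to $\tilde{g}_\varepsilon$, yield weighted estimates of the form $\lesssim\varepsilon^{3-\delta/2}$ in $C^{k,\alpha}_{\delta-2,\tilde{g}_\varepsilon}$ for $\delta\in(-2,0)$. Applying Theorem~\ref{thm schauder}, whose proof (Proposition~\ref{prop weight sch int 0}) uses only the metric asymptotics~\eqref{eq asymp1}\eqref{eq asymp2} that $\tilde{g}_\varepsilon$ also satisfies, gives $\|\phi_{\alpha,\varepsilon}\|_{C^{k+2,\alpha}_{\delta,\tilde{g}_\varepsilon}},\|\psi_{p,i,\varepsilon}\|_{C^{k+2,\alpha}_{\delta,\tilde{g}_\varepsilon}}\lesssim\varepsilon^{3-\delta/2}$; Lemma~\ref{lem conv 2-form} then delivers the claimed $C^k_{\rm loc}$-convergence on $\T\setminus S$ and on each $M_p$, combined with the trivial convergence of the approximations $\omega^-_{\alpha,\varepsilon}$ and $\varepsilon^{-2}\Xi'_{p,i,\varepsilon}$ read off from their piecewise definitions (after scaling $\tilde{\Xi}_{p,i,\varepsilon}$ by $\varepsilon^2$ to fit the hypothesis of Lemma~\ref{lem conv 2-form}$({\rm ii})$).

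The principal obstacle is the uniform-in-$\varepsilon$ invertibility of $L_{\tilde{\omega}_\varepsilon}$ on the $\tilde{g}_\varepsilon$-weighted spaces, since Theorem~\ref{thm schauder} is literally stated for $L_{\omega_\varepsilon}$; one must either rerun the contradiction argument of Proposition~\ref{prop weight sch int 0} with $\tilde{g}_\varepsilon$ in place of $g_\varepsilon$ (allowed because $\tilde{g}_\varepsilon$ satisfies \eqref{eq asymp1}\eqref{eq asymp2}), or treat $L_{\tilde{\omega}_\varepsilon}-L_{\omega_\varepsilon}$ as a small perturbation absorbable via the bounds in Subsection~\ref{subsec estimate on LC}. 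A secondary bookkeeping difficulty is arranging the weight exponent $\delta\in(-2,0)$ and the power $3-\delta/2$ of $\varepsilon$ so that both $3-\delta/2>0$ and $3-\delta/2>2-\delta$ hold (the latter ensuring bubble convergence of the rescaled corrections via Lemma~\ref{lem conv 2-form}$({\rm ii})$); both are satisfied throughout $(-2,0)$, so the argument goes through for $\delta$ freely chosen in that range.
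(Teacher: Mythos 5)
Your proposal is correct and follows essentially the same route as the paper: glue approximate closed $(1,1)$-forms, make them primitive (hence anti-self-dual and harmonic) by solving $L_{\tilde{\omega}_\varepsilon}$ with the uniform weighted Schauder estimate of Proposition \ref{prop weight sch int 0} (which applies to $\tilde{g}_\varepsilon$ since it satisfies \eqref{eq asymp1}\eqref{eq asymp2}, exactly the resolution the paper uses), and read off (i)--(iii) from cup products and (iv) from Lemma \ref{lem conv 2-form}. The only difference is cosmetic: you subtract $c_{p,i}\omega'_\varepsilon$ with a fixed representative of $[\omega_0]_X$ to land directly on the stated cohomology class, whereas the paper subtracts a multiple of $\tilde{\omega}_\varepsilon$ and then applies a second linear-algebra correction $\tilde{\Xi}_{p,i,\varepsilon}=\tilde{\Xi}'_{p,i,\varepsilon}+\cdots$.
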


\subsection{Proof of Theorem \ref{thm main 7}}
We put $\Xi_\varepsilon=\Xi'_{\varepsilon,p,i}$ or $\omega^-_{\alpha,\varepsilon}$ and $F_\varepsilon:=\frac{\Xi_\varepsilon\wedge \tilde{\omega}_\varepsilon}{\tilde{\omega}_\varepsilon^2}$. Denote by $g_\varepsilon,\tilde{g}_\varepsilon$ the K\"ahler metric of $\omega_\varepsilon,\tilde{\omega}_\varepsilon$ and $\nabla,\tilde{\nabla}$ the Levi-Civita connection of $g_\varepsilon,\tilde{g}_\varepsilon$, respectively. 
We consider a pair $(\lambda_\varepsilon,G_\varepsilon)$ of a constant $\lambda_\varepsilon$ and a function $G_\varepsilon$ on $X$ satisfying 
\begin{align*}
\left( \lambda_\varepsilon\tilde{\omega}_\varepsilon+\Xi_\varepsilon+\sqrt{-1}\ddb G_\varepsilon\right)\wedge\tilde{\omega}_\varepsilon=0.
\end{align*}
Here, $\lambda_\varepsilon$ is determined by 
\begin{align*}
\lambda_\varepsilon
&=-\frac{\int_X\Xi_\varepsilon\wedge\tilde{\omega}_\varepsilon}{\int_X\tilde{\omega}_\varepsilon^2}=-\frac{\varepsilon^2[\Xi_\varepsilon]\cdot Q}{{\rm vol}(\T)\sqrt{1+\varepsilon^4Q^2/{\rm vol}(\T)}},
\end{align*}
by Proposition \ref{prop cup prod}, then there exists $G_\varepsilon$ satisfying the above equation. Here, $G_\varepsilon$ is determined uniquely up to additive constants. Note that 
\[ \lambda_\varepsilon=
\left\{
\begin{array}{cl}
0 & \mbox{ if }\Xi_\varepsilon=\omega^-_{\alpha,\varepsilon}, \\
-\frac{\varepsilon^4[c_1(A_{p,i})]_X\cdot Q}{{\rm vol}(\T)\sqrt{1+\varepsilon^4Q^2/{\rm vol}(\T)}}, & \mbox{ if } \Xi_\varepsilon=\Xi'_{p,i,\varepsilon}.
\end{array}
\right.
\]

By the construction of $\Xi_\varepsilon$, we have shown that $\Xi_\varepsilon\wedge \omega_\varepsilon=0$ on $X\setminus\bigcup_{p}A(p,\varepsilon^{1/2})$. Therefore, 
\begin{align*}
\Xi_\varepsilon\wedge \tilde{\omega}_\varepsilon&=\Xi_\varepsilon\wedge(\tilde{\omega}_\varepsilon-W_\varepsilon^{-1/2}\omega_\varepsilon)\mbox{ on }X\setminus\bigcup_{p}A(p,\varepsilon^{1/2}),\\
\Xi_\varepsilon\wedge \tilde{\omega}_\varepsilon&=\Xi_\varepsilon\wedge(\tilde{\omega}_\varepsilon-W_\varepsilon^{-1/2}\omega_\varepsilon)+W_\varepsilon^{-1/2}\Xi_\varepsilon\wedge\omega_\varepsilon\mbox{ on }A(p,\varepsilon^{1/2}).
\end{align*}

\begin{prop}
We have $\| F_\varepsilon\|_{C^{k,\alpha}_{\delta-2,\tilde{g}_\varepsilon}}\lesssim \varepsilon^{3-\delta/2}$. 
\label{prop holder F}
\end{prop}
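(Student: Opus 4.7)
The plan is to use the decomposition
\[
\Xi_\varepsilon\wedge\tilde{\omega}_\varepsilon=W_\varepsilon^{-1/2}\,\Xi_\varepsilon\wedge\omega_\varepsilon+\sqrt{-1}\,W_\varepsilon^{-1/2}\,\Xi_\varepsilon\wedge\ddb\varphi_\varepsilon
\]
recorded just above the statement, together with two facts: $\tilde{\omega}_\varepsilon^2=\tfrac12\Omega\wedge\bar{\Omega}$ is a smooth, $\varepsilon$-independent, nowhere-vanishing volume form, so division by it is a bounded operation on all weighted H\"older spaces (in either $g_\varepsilon$ or $\tilde{g}_\varepsilon$); and the first summand $\Xi_\varepsilon\wedge\omega_\varepsilon$ is supported in $\bigcup_p A(p,\varepsilon^{1/2})$. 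It therefore suffices to bound each summand in $\|\cdot\|_{C^{k,\alpha}_{\delta-2,g_\varepsilon}}$ by $\varepsilon^{3-\delta/2}$ and then invoke Proposition \ref{prop Ck comparison} to pass to the $\tilde{g}_\varepsilon$-norm.

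For the $\ddb\varphi_\varepsilon$-part, Theorem \ref{thm fix pt} supplies $|\nabla^j\ddb\varphi_\varepsilon|_{g_\varepsilon}\lesssim\varepsilon^{3-\delta/2}\sigma_\varepsilon^{\delta-2-j}$, which I will combine region-by-region with the pointwise bounds on $\Xi_\varepsilon$ from Propositions \ref{prop decay bubble 2 forms} and \ref{prop decay outside 2 forms}. On the inner region $\overline{U(p,\varepsilon^{1/2})}$ the strong decay $|\nabla^j\Xi_\varepsilon|_{g_\varepsilon}\lesssim\varepsilon^{4}\sigma_\varepsilon^{-4-j}$ together with $\sigma_\varepsilon\ge\varepsilon$ produces a term of order $\varepsilon^{7-\delta/2}\sigma_\varepsilon^{-4}$, which is much smaller than $\varepsilon^{3-\delta/2}$. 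On the bulk $X\setminus\bigcup_p\overline{U(p,2\varepsilon^{1/2})}$ either $\Xi_\varepsilon\equiv0$ or $\Xi_\varepsilon=\pi^*\omega^-_\alpha$ is $\nabla$-parallel, so only the weighted $\ddb\varphi_\varepsilon$ bound enters. On the gluing annulus $A(p,\varepsilon^{1/2})$, where $\sigma_\varepsilon\sim\varepsilon^{1/2}$, applying the Leibniz rule with $|\nabla^j\Xi_\varepsilon|_{g_\varepsilon}\lesssim\varepsilon^{2-j/2}$ yields the required estimate after the weight factors collapse into powers of $\varepsilon^{1/2}$.

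For the $\Xi_\varepsilon\wedge\omega_\varepsilon$-term, supported on the annulus, I will expand both factors around their flat-model limits: $\omega_\varepsilon-\pi^*\omega_0=\sqrt{-1}\ddb(\cdots)$ and $\omega^-_{\alpha,\varepsilon}-\pi^*\omega^-_\alpha=\sqrt{-1}\ddb(\cdots)$, with the corrections controlled by Lemma \ref{prop gluing region estimate} (applied respectively to $(\varpi_p^{-1})^*\varphi_p-r^2/2$ and $(\varpi_p^{-1})^*\nu_{p,\alpha}-\nu_{0,\alpha}$), while $\Xi'_{p,i,\varepsilon}$ is itself $\sqrt{-1}\ddb(\cdots)$ and is controlled by the same lemma applied to $\psi_{p,i}$. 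The flat cross-term $\pi^*\omega^-_\alpha\wedge\pi^*\omega_0$ vanishes, and $\Xi'_{p,i,\varepsilon}$ has no flat-model piece on the annulus, so every surviving summand inherits the annular bound $|\nabla^j(\cdot)|_{g_\varepsilon}\lesssim\varepsilon^{2-j/2}$ coming from \eqref{eq decay gluing1} and Lemma \ref{prop gluing region estimate}. Multiplication by $\sigma_\varepsilon^{-\delta+2+j}\sim\varepsilon^{(-\delta+2+j)/2}$ then converts this into exactly $\varepsilon^{3-\delta/2}$, and the H\"older seminorm is reduced to one extra integer derivative by Lemma \ref{lem k,a < k+1}.

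The main obstacle is the case $\Xi_\varepsilon=\omega^-_{\alpha,\varepsilon}$: unlike $\Xi'_{p,i,\varepsilon}$, it is of order one (not $O(\varepsilon^{2})$) on the bulk, so it cannot be placed into a weighted space of negative weight on its own. The cure is the two structural inputs used above: on the bulk $\omega^-_{\alpha,\varepsilon}$ is $\nabla$-parallel, so only its $C^0$-value couples to the small factor $\ddb\varphi_\varepsilon$; and on the annulus the flat cancellation $\pi^*\omega^-_\alpha\wedge\pi^*\omega_0=0$ makes $\omega^-_{\alpha,\varepsilon}\wedge\omega_\varepsilon$ itself of size $\varepsilon^{2-j/2}$ in derivatives, rather than merely of size $|\omega^-_\alpha|\cdot|\omega_0|$.
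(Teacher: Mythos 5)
Your proposal is correct and follows essentially the same route as the paper: the identical decomposition $\Xi_\varepsilon\wedge\tilde{\omega}_\varepsilon=W_\varepsilon^{-1/2}\Xi_\varepsilon\wedge\omega_\varepsilon+\Xi_\varepsilon\wedge(\tilde{\omega}_\varepsilon-W_\varepsilon^{-1/2}\omega_\varepsilon)$, the same inputs (the pointwise decay of $\Xi_\varepsilon$ from Propositions \ref{prop decay bubble 2 forms} and \ref{prop decay outside 2 forms}, the annular support and cancellation making $\Xi_\varepsilon\wedge\omega_\varepsilon$ of size $\varepsilon^{2-k/2}$ where $\sigma_\varepsilon\sim\varepsilon^{1/2}$, and the Monge--Amp\`ere estimate $\|\varphi_\varepsilon\|_{C^{k+2,\alpha}_{\delta,g_\varepsilon}}\lesssim\varepsilon^{3-\delta/2}$), and the same final passage from the $g_\varepsilon$- to the $\tilde{g}_\varepsilon$-norm via Proposition \ref{prop Ck comparison}. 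The extra explicit remarks (boundedness of division by the fixed volume form $\tfrac12\Omega\wedge\bar{\Omega}$, the region-by-region bookkeeping) are consistent with what the paper leaves implicit.
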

\begin{proof}
By Propositions \ref{prop decay bubble 2 forms} and \ref{prop decay outside 2 forms}, we can see 
\begin{align*}
|\Xi_\varepsilon|_{g_\varepsilon}
&\lesssim 1,\\
|\nabla^k(\Xi_\varepsilon)|_{g_\varepsilon}
&\lesssim\varepsilon^4\sigma_\varepsilon^{-4-k}\quad (k\ge 1).
\end{align*}
Moreover, we have $\Xi_\varepsilon\wedge \omega_\varepsilon=0$ on $X\setminus\bigcup_{p}A(p,\varepsilon^{1/2})$ and 
\begin{align*}
\left|\nabla^k ( \Xi_\varepsilon\wedge W_\varepsilon^{-1/2}\omega_\varepsilon)\right|_{g_\varepsilon}&\lesssim \varepsilon^{2-k/2}\quad\mbox{on }A(p,\varepsilon^{1/2}).
\end{align*}
Since $\varepsilon^{1/2}\lesssim \sigma_\varepsilon$ and $\varepsilon^{-1/2}\lesssim\sigma_\varepsilon^{-1}$ on $A(p,\varepsilon^{1/2})$, we have 
\begin{align*}
\left|\nabla^k ( \Xi_\varepsilon\wedge W_\varepsilon^{-1/2}\omega_\varepsilon)\right|_{g_\varepsilon}&\lesssim \varepsilon^{3-\delta/2}\sigma_\varepsilon^{\delta-2-k}\quad\mbox{on }X.
\end{align*}
Recall that $\| W_\varepsilon^{1/2}\tilde{\omega}_\varepsilon-\omega_\varepsilon\|_{C^{k,\alpha}_{\delta-2,g_\varepsilon}}\lesssim \varepsilon^{3-\delta/2}$. Then we have $|\nabla^k(\tilde{\omega}_\varepsilon-W_\varepsilon^{-1/2}\omega_\varepsilon)|\lesssim\varepsilon^{3-\delta/2}\sigma_\varepsilon^{\delta-2-k}$ and 
\begin{align*}
\left|\nabla^k\{ \Xi_\varepsilon\wedge (\tilde{\omega}_\varepsilon-W_\varepsilon^{-1/2}\omega_\varepsilon)\}\right|_{g_\varepsilon}&\lesssim \varepsilon^{3-\delta/2}\sigma_\varepsilon^{\delta-2-k}.
\end{align*}
Therefore, we obtain 
\begin{align*}
\sigma_\varepsilon^{-(\delta-2)+k}\left|\nabla^k(\Xi_\varepsilon\wedge \tilde{\omega}_\varepsilon)\right|
\lesssim \varepsilon^{3-\delta/2},
\end{align*}
which implies $\| F_\varepsilon\|_{C^{k,\alpha}_{\delta-2,g_\varepsilon}}\lesssim\varepsilon^{3-\delta/2}$, then Proposition \ref{prop Ck comparison} gives $\| F_\varepsilon\|_{C^{k,\alpha}_{\delta-2,\tilde{g}_\varepsilon}}\lesssim\varepsilon^{3-\delta/2}$.
\end{proof}

\begin{prop}
Let $G_\varepsilon\in C^\infty(X)$ be the unique function satisfies 
$\frac{1}{2}L_{\tilde{\omega}_\varepsilon}(G_\varepsilon)=-F_\varepsilon-\lambda_\varepsilon$ and $\int_XG_\varepsilon\tilde{\omega}_\varepsilon^2=0$. Then we have $\| G_\varepsilon\|_{C^{k+2,\alpha}_{\delta,\tilde{g}_\varepsilon}}\lesssim\varepsilon^{3-\delta/2}$. 
\label{prop holder of potential}
\end{prop}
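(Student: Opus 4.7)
The plan is to apply the weighted Schauder estimate of Theorem \ref{thm schauder} to $G_\varepsilon$ directly, but with the Ricci-flat metric $\tilde g_\varepsilon$ in place of the approximating metric $g_\varepsilon$. For a real-valued function $\psi$ on a K\"ahler surface, $L_{\tilde\omega_\varepsilon}(\psi)$ agrees, up to a nonzero multiplicative constant, with $\Delta_{\tilde g_\varepsilon}(\psi)$, so what we really need is the analogue of Proposition \ref{prop weight sch int 0} for the family $(\tilde g_\varepsilon)_\varepsilon$.

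First I would verify that $(\tilde g_\varepsilon)_\varepsilon$ still satisfies the asymptotic conditions \eqref{eq asymp1} and \eqref{eq asymp2} that power Proposition \ref{prop weight sch int 0}. The bound $\|\tilde\omega_\varepsilon-\omega_\varepsilon\|_{C^{k,\alpha}_{\delta-2,g_\varepsilon}}\lesssim\varepsilon^{3-\delta/2}$ established in the proof of Theorem \ref{thm RFK and appro}, combined with the rescaling behavior of the weighted norms under $\mathcal{I}_{p,\varepsilon}$, gives pointwise decay of $\tilde g_\varepsilon-\pi^*g_0$ on compact subsets of $\T\setminus S$ and of $\varepsilon^{-2}(\mathcal I_{p,\varepsilon}^{-1})^*\tilde g_\varepsilon - g_p$ on compact subsets of $M_p$, at any order of derivatives. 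With these, the proof of Proposition \ref{prop weight sch int 0} goes through verbatim for $\tilde g_\varepsilon$, producing a constant $K'_{k,\delta}$ independent of $\varepsilon$ such that
\[
\|u\|_{C^{k+2,\alpha}_{\delta,\tilde g_\varepsilon}} \le K'_{k,\delta}\,\|L_{\tilde\omega_\varepsilon}(u)\|_{C^{k,\alpha}_{\delta-2,\tilde g_\varepsilon}}
\]
for every $u\in E^{k+2,\alpha}_{\delta,\tilde g_\varepsilon}$ and every sufficiently small $\varepsilon$.

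Next I would apply this estimate to $G_\varepsilon$, which lies in $E^{k+2,\alpha}_{\delta,\tilde g_\varepsilon}$ by virtue of its normalization $\int_X G_\varepsilon\,\tilde\omega_\varepsilon^2=0$. Using $\tfrac12 L_{\tilde\omega_\varepsilon}(G_\varepsilon)=-F_\varepsilon-\lambda_\varepsilon$, we get
\[
\|G_\varepsilon\|_{C^{k+2,\alpha}_{\delta,\tilde g_\varepsilon}} \lesssim \|F_\varepsilon\|_{C^{k,\alpha}_{\delta-2,\tilde g_\varepsilon}} + \|\lambda_\varepsilon\|_{C^{k,\alpha}_{\delta-2,\tilde g_\varepsilon}}.
\]
Proposition \ref{prop holder F} controls the first term by $\varepsilon^{3-\delta/2}$. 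For the constant $\lambda_\varepsilon$, the explicit formula gives $|\lambda_\varepsilon|\lesssim\varepsilon^4$ (and $\lambda_\varepsilon=0$ when $\Xi_\varepsilon=\omega^-_{\alpha,\varepsilon}$); since $\sigma_\varepsilon\le 1$ and $-\delta+2\in(2,4)$, the weight $\sigma_\varepsilon^{-\delta+2}$ is bounded by $1$, so $\|\lambda_\varepsilon\|_{C^{k,\alpha}_{\delta-2,\tilde g_\varepsilon}}\lesssim\varepsilon^4\lesssim\varepsilon^{3-\delta/2}$ because $\delta>-2$. Combining these finishes the proof.

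The main obstacle I anticipate is the bookkeeping required to transfer the contradiction argument of Proposition \ref{prop weight sch int 0} to $\tilde g_\varepsilon$: one must check that the pointwise decay of $\tilde g_\varepsilon-g_\varepsilon$ is strong enough both on the flat bulk $\T\setminus S$ and on the rescaled ALE regions near each $p\in S$, so that limits of rescaled sequences are harmonic with respect to $\pi^*g_0$ or to $g_p$, respectively. Once \eqref{eq asymp1}--\eqref{eq asymp2} are verified for $\tilde g_\varepsilon$ with the same rescaling factors $\varepsilon$ used for $g_\varepsilon$, the rest is a direct plug-in of the estimates already at hand.
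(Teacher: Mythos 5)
Your proposal is correct and follows essentially the same route as the paper: the paper's proof simply invokes Proposition \ref{prop weight sch int 0} (which is stated for any family of metrics satisfying \eqref{eq asymp1}--\eqref{eq asymp2}, so it applies to $\tilde g_\varepsilon$ directly without rerunning the contradiction argument) and then bounds $\|F_\varepsilon+\lambda_\varepsilon\|_{C^{k,\alpha}_{\delta-2,\tilde g_\varepsilon}}$ by $\varepsilon^{3-\delta/2}$ exactly as you do. Your extra care in verifying the asymptotic conditions for $\tilde g_\varepsilon$ and in estimating the constant term $\lambda_\varepsilon$ just makes explicit what the paper leaves implicit.
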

\begin{proof}
Since $\tilde{g}_\varepsilon$ satisfy \eqref{eq asymp1}\eqref{eq asymp2}, we can apply Proposition \ref{prop weight sch int 0}. Then there are constants $K_{k,\delta}$ such that  
\begin{align*}
\| G_\varepsilon\|_{C^{k+2,\alpha}_{\delta,\tilde{g}_\varepsilon}}
&\le K_{k,\delta}\| F_\varepsilon+\lambda_\varepsilon\|_{C^{k,\alpha}_{\delta-2,\tilde{g}_\varepsilon}}\lesssim\varepsilon^{3-\delta/2}.
\end{align*}
\end{proof}
Let $G_\varepsilon$ be the function obtained by Proposition \ref{prop holder of potential} and put 
\begin{align*}
\mathcal{A}(\Xi_\varepsilon):=\Xi_\varepsilon+\lambda_\varepsilon\tilde{\omega}_\varepsilon+\sqrt{-1}\ddb G_\varepsilon.
\end{align*}
By the construction, we have 
\begin{align}
\mathcal{A}(\Xi_\varepsilon)\in\Omega^{1,1}(X),\quad
\mathcal{A}(\Xi_\varepsilon)\wedge\tilde{\omega}_\varepsilon=0.\label{eq harm proj}
\end{align}
\begin{prop}
Let $\mathcal{A}(\Xi_\varepsilon)$ be as above. Then $\mathcal{A}(\Xi_\varepsilon)$ is an anti-self-dual and harmonic $2$-form on $(X,\tilde{g}_\varepsilon)$. 
\end{prop}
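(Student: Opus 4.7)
The plan is to deduce both properties directly from the two characterizing features recorded in \eqref{eq harm proj}, namely that $\mathcal{A}(\Xi_\varepsilon)$ is of type $(1,1)$ and primitive with respect to $\tilde{\omega}_\varepsilon$.

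First I would establish anti-self-duality. On a K\"ahler surface $(X,\tilde{\omega}_\varepsilon,\tilde{g}_\varepsilon)$, the bundle of real $2$-forms decomposes as $\Lambda^2_+\oplus \Lambda^2_-$, and after complexification one has
\begin{align*}
\Lambda^2_+\otimes \C &= \C\,\tilde{\omega}_\varepsilon \oplus \Lambda^{2,0}\oplus \Lambda^{0,2},\\
\Lambda^2_-\otimes \C &= \Lambda^{1,1}_0,
\end{align*}
where $\Lambda^{1,1}_0$ denotes the primitive $(1,1)$-forms, i.e.\ the kernel of wedging with $\tilde{\omega}_\varepsilon$. This is a pointwise linear algebra fact on $4$-dimensional Hermitian vector spaces. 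Since $\mathcal{A}(\Xi_\varepsilon)\in\Omega^{1,1}(X)$ and $\mathcal{A}(\Xi_\varepsilon)\wedge\tilde{\omega}_\varepsilon=0$, it is a real primitive $(1,1)$-form, hence lies in $\Lambda^2_-$, i.e.\ $*_{\tilde{g}_\varepsilon}\mathcal{A}(\Xi_\varepsilon)=-\mathcal{A}(\Xi_\varepsilon)$.

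Next I would verify closedness. Each of the three summands is closed: $\Xi_\varepsilon$ is closed by the constructions in Sections \ref{sec ASD ALE} and \ref{sec ASD torus}; $\tilde{\omega}_\varepsilon$ is closed since it is K\"ahler; and $\sqrt{-1}\ddb G_\varepsilon$ is exact. Therefore $d\mathcal{A}(\Xi_\varepsilon)=0$.

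Finally, for any anti-self-dual closed $2$-form $\alpha$ one has
\begin{align*}
d^{*_{\tilde{g}_\varepsilon}}\alpha = -*_{\tilde{g}_\varepsilon} d *_{\tilde{g}_\varepsilon}\alpha = -*_{\tilde{g}_\varepsilon} d(-\alpha) = *_{\tilde{g}_\varepsilon}d\alpha = 0,
\end{align*}
so $\mathcal{A}(\Xi_\varepsilon)$ is co-closed as well and hence harmonic. There is no real obstacle here; the only thing to be careful about is the pointwise K\"ahler identity that characterizes $\Lambda^2_-$ as primitive $(1,1)$-forms in complex dimension $2$, which is standard.
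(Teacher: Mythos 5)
Your proof is correct and follows essentially the same route as the paper: anti-self-duality from the pointwise linear-algebra fact that real primitive $(1,1)$-forms on a K\"ahler surface are exactly the anti-self-dual $2$-forms, and harmonicity from closedness plus anti-self-duality. The only cosmetic difference is that the paper deduces the first step by wedging against the pointwise self-dual basis $\tilde{\omega}_\varepsilon,{\rm Re}(\Omega),{\rm Im}(\Omega)$ of the hyper-K\"ahler structure (using $\mathcal{A}(\Xi_\varepsilon)\wedge\Omega=0$ for type reasons), whereas you invoke the complexified decomposition $\Lambda^2_-\otimes\C=\Lambda^{1,1}_0$ directly; these are the same statement.
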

\begin{proof}
Now, $(X,\tilde{g}_\varepsilon)$ is a hyper-K\"ahler manifold of dimension $4$. Then $\tilde{\omega}_\varepsilon,{\rm Re} (\Omega),{\rm Im} (\Omega)$ form a basis of self-dual $2$-forms on every $p\in X$. Since $\mathcal{A}(\Xi_\varepsilon)\wedge\tilde{\omega}_\varepsilon=\mathcal{A}(\Xi_\varepsilon)\wedge \Omega=0$, $\mathcal{A}(\Xi_\varepsilon)$ is anti-self-dual. Moreover, $\mathcal{A}(\Xi_\varepsilon)$ is closed by the construction, which gives  $d*\mathcal{A}(\Xi_\varepsilon)=-d\mathcal{A}(\Xi_\varepsilon)_\varepsilon=0$, hence it is harmonic. 
\end{proof}
Now, we put $\Xi_\varepsilon^0,\Xi_\varepsilon^1=\Xi'_{p,i,\varepsilon}$ or $\omega^-_{\alpha,\varepsilon}$. For a Riemannian metric $g$, denote by $\langle\cdot,\cdot\rangle_g$ the $L^2$-inner product with respect to $g$, then we have  
\begin{align*}
\langle\mathcal{A}(\Xi_\varepsilon^0),\mathcal{A}(\Xi_\varepsilon^1)\rangle_{\tilde{g}_\varepsilon}=-\int_X\mathcal{A}(\Xi_\varepsilon^0)\wedge \mathcal{A}(\Xi_\varepsilon^1).
\end{align*}
Put $\lambda_\varepsilon^i=-\int_X\Xi_\varepsilon^i\wedge\tilde{\omega}_\varepsilon/\int_X\tilde{\omega}_\varepsilon^2$, then 
\begin{align*}
\langle\mathcal{A}(\Xi_\varepsilon^0),\mathcal{A}(\Xi_\varepsilon^1)\rangle_{\tilde{g}_\varepsilon}
&=-\int_X(\Xi_\varepsilon^0+\lambda_\varepsilon^0\tilde{\omega}_\varepsilon)\wedge(\Xi_\varepsilon^1+\lambda_\varepsilon^1\tilde{\omega}_\varepsilon)\\
&=-\int_X\Xi_\varepsilon^0\wedge\Xi_\varepsilon^1+\lambda_\varepsilon^0\lambda_\varepsilon^1{\rm vol}(\T).
\end{align*}
From now on, we write 
\begin{align*}
\tilde{\Xi}'_{p,i,\varepsilon} := \varepsilon^{-2}\mathcal{A}(\Xi'_{p,i,\varepsilon}),\quad
\tilde{\omega}^-_{\alpha,\varepsilon} :=\mathcal{A}(\omega^-_{\alpha,\varepsilon}).
\end{align*}
By \eqref{eq RFK class} and the computation of $\lambda_\varepsilon$, we have 
\begin{align*}
[\tilde{\omega}^-_{\alpha,\varepsilon}] 
&=[\omega^-_{\alpha,\varepsilon}]=[\omega^-_\alpha]_X,\\
[\tilde{\Xi}'_{p,i,\varepsilon}] 
&= \varepsilon^{-2}[\Xi'_{p,i,\varepsilon}]-
\frac{\varepsilon^2[c_1(A_{p,i,\varepsilon})]_X\cdot Q}{{\rm vol}(\T)\sqrt{1+\varepsilon^4Q^2/{\rm vol}(\T)}}[\tilde{\omega}_\varepsilon]\\
&= [c_1(A_{p,i})]_X-\frac{\varepsilon^2[c_1(A_{p,i,\varepsilon})]_X\cdot Q}{{\rm vol}(\T)+\varepsilon^4Q^2}([\omega_0]_X+\varepsilon^2 Q).
\end{align*}
Moreover, we define anti-self-dual $2$-forms $\tilde{\Xi}_{p,i,\varepsilon}$ by 
\begin{align*}
\tilde{\Xi}_{p,i,\varepsilon}
&:= \tilde{\Xi}'_{p,i,\varepsilon}
+\frac{\varepsilon^4[c_1(A_{p,i})]_X\cdot Q}{\vol(\T)} \sum_{q\in S}\sum_{j=1}^{N_{\Gamma_q}}\eta_q^j\tilde{\Xi}'_{q,j,\varepsilon}.
\end{align*}
By direct calculation, we have 
\begin{align*}
[\tilde{\Xi}_{p,i,\varepsilon}]
&=  [c_1(A_{p,i})]_X+\frac{\varepsilon^2\sum_jL_{p,ij}\eta_p^j}{{\rm vol}(\T)}[\omega_0]_X.
\end{align*}
In particular, $\{\tilde{\Xi}_{p,i,\varepsilon}\}_{p,i}$ is linearly independent.

For anti-self-dual $2$-forms $a,b\in\Omega^2(X)$, we can see $\langle a,b\rangle_{\tilde{g}_\varepsilon}=\int_Xa\wedge *b=-\int_Xa\wedge b$. By Proposition \ref{prop cup prod}, we obtain 
\begin{align*}
\langle \tilde{\Xi}_{p,i,\varepsilon},\tilde{\Xi}_{q,j,\varepsilon}\rangle_{\tilde{g}_\varepsilon}&=\frac{\varepsilon^4\{ [c_1(A_{p,i})]_X\cdot Q\}
\{ [c_1(A_{q,j})]_X\cdot Q\}}{\vol(\T)}\quad(p\neq q),\\
\langle \tilde{\Xi}_{p,i,\varepsilon},\tilde{\Xi}_{p,j,\varepsilon}\rangle_{\tilde{g}_\varepsilon}&=L_{p,ij}+\frac{\varepsilon^4\{ [c_1(A_{p,i})]_X\cdot Q\}\{ [c_1(A_{p,j})]_X\cdot Q\}}{\vol(\T)},\\
\langle \tilde{\omega}^-_{\alpha,\varepsilon},\tilde{\Xi}_{p,i,\varepsilon}\rangle_{\tilde{g}_\varepsilon}&=0,\\
\langle \tilde{\omega}^-_{\alpha,\varepsilon},\tilde{\omega}^-_{\beta,\varepsilon}\rangle_{\tilde{g}_\varepsilon}&=\delta_{\alpha\beta}{\rm vol}(\T).
\end{align*}

\begin{prop}
We have the following convergence.
\begin{itemize}
\setlength{\parskip}{0cm}
\setlength{\itemsep}{0cm}
 \item[$({\rm i})$] $\tilde{\omega}^-_{\alpha,\varepsilon}\stackrel{\T\setminus S}{\rightarrow} \omega^-_\alpha$ and $\varepsilon^{-2}\tilde{\omega}^-_{\alpha,\varepsilon}\stackrel{S}{\rightarrow} (\sqrt{-1}\ddb\nu_{p,\alpha})_{p\in S}$. 
 \item[$({\rm ii})$] $\tilde{\Xi}_{p,i,\varepsilon}\stackrel{\T\setminus S}{\rightarrow} 0$ and $\tilde{\Xi}_{p,i,\varepsilon}\stackrel{S}{\rightarrow} (c_1(A_{p,i}),(0)_{p'\in S\setminus \{ p\}})$. 
\end{itemize}
\end{prop}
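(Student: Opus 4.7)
The approach is to apply Lemma \ref{lem conv 2-form} to the definitions $\tilde{\omega}^-_{\alpha,\varepsilon}=\mathcal{A}(\omega^-_{\alpha,\varepsilon})$ and $\tilde{\Xi}'_{p,i,\varepsilon}=\varepsilon^{-2}\mathcal{A}(\Xi'_{p,i,\varepsilon})$, and then to handle $\tilde{\Xi}_{p,i,\varepsilon}$ through the explicit formula expressing it as $\tilde{\Xi}'_{p,i,\varepsilon}$ plus an $O(\varepsilon^4)$ linear combination of the $\tilde{\Xi}'_{q,j,\varepsilon}$. Write $\mathcal{A}(\Xi_\varepsilon)=\Xi_\varepsilon+\lambda_\varepsilon\tilde{\omega}_\varepsilon+\sqrt{-1}\ddb G_\varepsilon$. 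By the case analysis of $\lambda_\varepsilon$ recorded before Proposition \ref{prop holder F} we have $\lambda_\varepsilon=0$ if $\Xi_\varepsilon=\omega^-_{\alpha,\varepsilon}$ and $|\lambda_\varepsilon|\lesssim\varepsilon^4$ if $\Xi_\varepsilon=\Xi'_{p,i,\varepsilon}$, while Proposition \ref{prop holder of potential} gives $\|G_\varepsilon\|_{C^{k+2,\alpha}_{\delta,\tilde{g}_\varepsilon}}\lesssim \varepsilon^{3-\delta/2}$.

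The first step is to transfer the estimate on $G_\varepsilon$ into the $g_\varepsilon$-weighted norm, since Lemma \ref{lem conv 2-form} is stated with respect to $g_\varepsilon$. The derivation in Subsection \ref{subsec estimate on LC} leading to \eqref{ineq nabla k} used only the mutual bound $g_\varepsilon/2\le\tilde{g}_\varepsilon\le 2g_\varepsilon$ and the control $s_k\lesssim \varepsilon^{3-\delta/2}\sigma_\varepsilon^{\delta-k-2}$ on $\nabla^k\tilde{g}_\varepsilon$; the same inequalities hold after swapping the roles of $g_\varepsilon$ and $\tilde{g}_\varepsilon$, so the argument gives the reverse comparison $\|T\|_{C^k_{\delta',g_\varepsilon}}\le C'_k\|T\|_{C^k_{\delta',\tilde{g}_\varepsilon}}$, in particular $\|G_\varepsilon\|_{C^{k+2,\alpha}_{\delta,g_\varepsilon}}\lesssim\varepsilon^{3-\delta/2}$ and hence $\|\sqrt{-1}\ddb G_\varepsilon\|_{C^{k,\alpha}_{\delta-2,g_\varepsilon}}\lesssim\varepsilon^{3-\delta/2}$. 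Since $\omega_\varepsilon$ is parallel for $g_\varepsilon$ and $\|\tilde{\omega}_\varepsilon-\omega_\varepsilon\|_{C^{k,\alpha}_{\delta-2,g_\varepsilon}}\lesssim \varepsilon^{3-\delta/2}$ as in the proof of Theorem \ref{thm RFK and appro}, the weighted norm $\|\tilde{\omega}_\varepsilon\|_{C^{k,\alpha}_{\delta-2,g_\varepsilon}}$ is bounded uniformly in $\varepsilon$. Combining, one obtains
\begin{equation*}
\|\mathcal{A}(\Xi_\varepsilon)-\Xi_\varepsilon\|_{C^{k,\alpha}_{\delta-2,g_\varepsilon}}\lesssim \varepsilon^{3-\delta/2}
\end{equation*}
for any $\delta\in(-2,0)$.

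For (i), $\omega^-_{\alpha,\varepsilon}$ coincides with $\pi^*\omega^-_\alpha$ on $X\setminus\bigcup_p\overline{U(p,2\varepsilon^{1/2})}$ and with $\varepsilon^2\mathcal{I}^*_{p,\varepsilon}(\sqrt{-1}\ddb\nu_{p,\alpha})$ on $\overline{U(p,\varepsilon^{1/2})}$. For $\delta\in(-2,0)$ we have $3-\delta/2>0$ and $3-\delta/2>2-\delta$, so both parts of Lemma \ref{lem conv 2-form} apply with $\delta'=3-\delta/2$ and yield $\tilde{\omega}^-_{\alpha,\varepsilon}\stackrel{\T\setminus S}{\to}\omega^-_\alpha$ and $\varepsilon^{-2}\tilde{\omega}^-_{\alpha,\varepsilon}\stackrel{S}{\to}(\sqrt{-1}\ddb\nu_{p,\alpha})_{p\in S}$. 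For (ii), the support of $\Xi'_{p,i,\varepsilon}$ lies in $\overline{U(p,2\varepsilon^{1/2})}$ and equals $\varepsilon^2\mathcal{I}^*_{p,\varepsilon}c_1(A_{p,i})$ on $\overline{U(p,\varepsilon^{1/2})}$; Lemma \ref{lem conv 2-form} applied to $T_\varepsilon=\Xi'_{p,i,\varepsilon}$ with $T_0=0$, $T_p=c_1(A_{p,i})$, and $T_{p'}=0$ for $p'\in S\setminus\{p\}$ gives $\tilde{\Xi}'_{p,i,\varepsilon}\stackrel{\T\setminus S}{\to}0$ and $\tilde{\Xi}'_{p,i,\varepsilon}\stackrel{S}{\to}(c_1(A_{p,i}),(0)_{p'\neq p})$. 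The relation $\tilde{\Xi}_{p,i,\varepsilon}=\tilde{\Xi}'_{p,i,\varepsilon}+O(\varepsilon^4)\sum_{q,j}\eta_q^j\tilde{\Xi}'_{q,j,\varepsilon}$ then transfers both convergences to $\tilde{\Xi}_{p,i,\varepsilon}$, since each $\tilde{\Xi}'_{q,j,\varepsilon}$ is bounded in $C^k$ on every compact subset of $\T\setminus S$ and of each $M_{p'}$ by the convergences just obtained.

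The main technical point is the two-sided comparison between the $g_\varepsilon$- and $\tilde{g}_\varepsilon$-weighted H\"older norms; Proposition \ref{prop Ck comparison} provides only one direction, and one must verify that the construction of Subsection \ref{subsec estimate on LC} is genuinely symmetric under exchanging the two metrics. Once this is in place, the remainder is a bookkeeping exercise combining Proposition \ref{prop holder of potential}, the support and block-form properties of $\omega^-_{\alpha,\varepsilon}$ and $\Xi'_{p,i,\varepsilon}$ built into their definitions, and Lemma \ref{lem conv 2-form}.
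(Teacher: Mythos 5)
Your proposal is correct and follows essentially the same route as the paper: derive $\|\mathcal{A}(\Xi_\varepsilon)-\Xi_\varepsilon\|_{C^{k,\alpha}_{\delta-2,g_\varepsilon}}\lesssim\varepsilon^{3-\delta/2}$ from Proposition \ref{prop holder of potential} (plus the smallness of $\lambda_\varepsilon$), feed this into Lemma \ref{lem conv 2-form} with the appropriate scalings, and absorb the $O(\varepsilon^4)$ correction relating $\tilde{\Xi}_{p,i,\varepsilon}$ to $\tilde{\Xi}'_{p,i,\varepsilon}$. Your explicit treatment of the reverse comparison between the $g_\varepsilon$- and $\tilde{g}_\varepsilon$-weighted norms is a detail the paper leaves implicit, and your justification of it (symmetry of the argument in Subsection \ref{subsec estimate on LC} under $g_\varepsilon/2\le\tilde{g}_\varepsilon\le 2g_\varepsilon$) is sound.
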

\begin{proof}
By Proposition, \ref{prop holder of potential} we have $\|\tilde{\omega}^-_{\alpha,\varepsilon}-\omega^-_{\alpha,\varepsilon}\|_{C^{k,\alpha}_{\delta-2,g_\varepsilon}}\lesssim \varepsilon^{3-\delta/2}$. 
By the construction of $\omega^-_{\alpha,\varepsilon}$ and by Lemma \ref{lem conv 2-form}, we can show $({\rm i})$. Similarly, we have 
$\|\tilde{\Xi}'_{p,i,\varepsilon}-\varepsilon^{-2}\Xi'_{p,i,\varepsilon}\|_{C^{k,\alpha}_{\delta-2,g_\varepsilon}}\lesssim \varepsilon^{1-\delta/2}$ and $\|\tilde{\Xi}_{p,i,\varepsilon}-\tilde{\Xi}'_{p,i,\varepsilon}\|_{C^{k,\alpha}_{\delta-2,g_\varepsilon}}\lesssim \varepsilon^4\|\tilde{\Xi}'_{p,i,\varepsilon}\|_{C^{k,\alpha}_{\delta-2,g_\varepsilon}}$. Moreover, by Proposition \ref{prop decay bubble 2 forms}, we have $\|\Xi'_{p,i,\varepsilon}\|_{C^{k,\alpha}_{\delta-2,g_\varepsilon}}\lesssim\varepsilon^{2-\delta}$ by $-2<\delta<0$. Therefore, $\|\tilde{\Xi}_{p,i,\varepsilon}-\varepsilon^{-2}\Xi'_{p,i,\varepsilon}\|_{C^{k,\alpha}_{\delta-2,g_\varepsilon}}\lesssim \varepsilon^{1-\delta/2}$. Since $1-\delta/2>0$, we have $({\rm ii})$ by Lemma \ref{lem conv 2-form}.
\end{proof}

\section{Examples}\label{sec ex}
Here, we give some examples of $\Lambda\subset \C^2$ and $\Gamma\subset SU(2)$ with $\Gamma\cdot\Lambda\subset\Lambda$. We write 
\[ \C^2=\left\{\left.
 \left(
\begin{array}{c}
z \\
w
\end{array}
\right )\right|\, z,w\in\C\right\}
\]
and for the natural quotient map $\pi_{\Lambda,\Gamma}\colon\C^2\to \T=\T^4_\Lambda/\Gamma$, we write 
\[
\pi_{\Lambda,\Gamma}
 \left(
\begin{array}{c}
z \\
w
\end{array}
\right )= \left[
\begin{array}{c}
z \\
w
\end{array}
\right].
\]

$(1)$ Let $\Lambda$ be any lattice of rank $4$ and $\Gamma=\{ \pm 1\}\cong\Z_2$. Then $\Gamma\cdot\Lambda\subset\Lambda$. Moreover, $\# S=16$ and $\Gamma_p=\Z_2$ for all $p\in S$. In this case, $d_\Gamma=3$ and $N_{\Gamma_p}=1$, hence $d_\Gamma+\sum_{p\in S}N_{\Gamma_p}=19=b^2_-(K3)$.

$(2)$ Put $\Gamma=\Z_4$. In this case, we have $d_{\Gamma}=1$. Here, we need to choose $\Lambda$ carefully so that $\Gamma\cdot\Lambda\subset\Lambda$ holds. We suppose 
\[ \Z_4=
\left\{ \left. \left(
\begin{array}{ccc}
\sqrt{-1} & 0 \\
0 & -\sqrt{-1} 
\end{array}
\right )^m\right|\, m=0,1,2,3\right\},
\]
\[ \Lambda=
\Z\left (
\begin{array}{c}
1 \\
0
\end{array}
\right )\oplus\Z\left (
\begin{array}{c}
\sqrt{-1}  \\
0
\end{array}
\right )\oplus
\Z\left (
\begin{array}{c}
0 \\
1
\end{array}
\right )\oplus
\Z\left (
\begin{array}{c}
0 \\
\sqrt{-1} 
\end{array}
\right ).
\]
Then we have $\Gamma\cdot\Lambda\subset\Lambda$. The points $p\in S$ with $\Gamma_p=\Z_4$ are 
\[ 
\left[
\begin{array}{c}
0 \\
0
\end{array}
\right],\,\left[
\begin{array}{c}
\frac{1+\sqrt{-1}}{2} \\
0
\end{array}
\right],\, \left[
\begin{array}{c}
0 \\
\frac{1+\sqrt{-1}}{2} 
\end{array}
\right],\, \left[
\begin{array}{c}
\frac{1+\sqrt{-1}}{2} \\
\frac{1+\sqrt{-1}}{2} 
\end{array}
\right].
\]
The points $p\in S$ with $\Gamma_p=\Z_2$ are 
\[ 
\left[
\begin{array}{c}
1/2 \\
0
\end{array}
\right],\,\left[
\begin{array}{c}
1/2 \\
\frac{1+\sqrt{-1}}{2} 
\end{array}
\right],\,\left[
\begin{array}{c}
0 \\
\frac{1}{2}
\end{array}
\right],\, \left[
\begin{array}{c}
\frac{1+\sqrt{-1}}{2} \\
1/2
\end{array}
\right],\, \left[
\begin{array}{c}
1/2 \\
1/2
\end{array}
\right],\, \left[
\begin{array}{c}
1/2 \\
\sqrt{-1}/2
\end{array}
\right].
\]
We can see that $N_{\Z_k}=k-1$. Therefore, there are $4$ points with $N_{\Gamma_p}=3$ in $S$ and $6$ points with $N_{\Gamma_p}=1$ in $S$. Consequently, we have 
\begin{align*}
d_\Gamma+\sum_{p\in S}N_{\Gamma_p}
=1+3\times 4+ 1\times 6=19.
\end{align*}

$(3)$ Next, put 
\[ \Gamma=
\left\{ \left. \left(
\begin{array}{ccc}
\sqrt{-1} & 0 \\
0 & -\sqrt{-1} 
\end{array}
\right )^m\left(
\begin{array}{ccc}
0 & -1 \\
1 & 0 
\end{array}
\right )^l\right|\, m=0,1,2,3,\, l=0,1\right\},
\]
and let $\Lambda$ be as in the previous example. Then $\Gamma\cdot\Lambda\subset\Lambda$. In this case, $\Gamma$ is not abelian, hence $d_\Gamma=0$. The points $p\in S$ with $\Gamma_p=\Gamma$ are 
\[ 
\left[
\begin{array}{c}
0 \\
0
\end{array}
\right],\, \left[
\begin{array}{c}
\frac{1+\sqrt{-1}}{2} \\
\frac{1+\sqrt{-1}}{2} 
\end{array}
\right].
\]
The points $p\in S$ with $\Gamma_p\cong\Z_4$ are 
\[ 
\left[
\begin{array}{c}
\frac{1+\sqrt{-1}}{2} \\
0
\end{array}
\right],\,\left[
\begin{array}{c}
1/2 \\
1/2
\end{array}
\right],\, \left[
\begin{array}{c}
1/2 \\
\sqrt{-1}/2
\end{array}
\right],
\]
the points $p\in S$ with $\Gamma_p\cong\Z_2$ are 
\[ 
\left[
\begin{array}{c}
1/2 \\
0
\end{array}
\right],\,\left[
\begin{array}{c}
1/2 \\
\frac{1+\sqrt{-1}}{2}
\end{array}
\right].
\]
By the McKay correspondence, the corresponding Dynkin diagram of $\Gamma$ is of type $D_4$, which has $4$ vertices. 
Therefore, we have $N_\Gamma=4$ and 
\begin{align*}
d_\Gamma+\sum_{p\in S}N_{\Gamma_p}
=0+4\times 2+ 3\times 3+1\times 2=19.
\end{align*}

\bibliographystyle{plain}

\begin{thebibliography}{10}

\bibitem{ArezzoPacard}
Claudio Arezzo and Frank Pacard.
\newblock Blowing up and desingularizing constant scalar curvature {K}\"ahler
  manifolds.
\newblock {\em Acta Math.}, 196(2):179--228, 2006.

\bibitem{Cheeger-Colding3}
Jeff Cheeger and Tobias~H. Colding.
\newblock On the structure of spaces with {R}icci curvature bounded below.
  {III}.
\newblock {\em J. Differential Geom.}, 54(1):37--74, 2000.

\bibitem{CheegerGromovTaylor1982}
Jeff Cheeger, Mikhail Gromov, and Michael Taylor.
\newblock Finite propagation speed, kernel estimates for functions of the
  {L}aplace operator, and the geometry of complete {R}iemannian manifolds.
\newblock {\em J. Differential Geometry}, 17(1):15--53, 1982.

\bibitem{Donaldson2010}
Simon~K Donaldson.
\newblock Calabi-yau metrics on kummer surfaces as a model glueing problem.
\newblock {\em Advances in Geometric Analysis}, 21:109--118, 2012.

\bibitem{Fukaya1987}
Kenji Fukaya.
\newblock Collapsing of {R}iemannian manifolds and eigenvalues of {L}aplace
  operator.
\newblock {\em Invent. Math.}, 87(3):517--547, 1987.

\bibitem{GibbonsPope1979}
G.~W. Gibbons and C.~N. Pope.
\newblock The positive action conjecture and asymptotically {E}uclidean metrics
  in quantum gravity.
\newblock {\em Comm. Math. Phys.}, 66(3):267--290, 1979.

\bibitem{GilbargTrudinger}
David Gilbarg and Neil~S. Trudinger.
\newblock {\em Elliptic partial differential equations of second order}.
\newblock Classics in Mathematics. Springer-Verlag, Berlin, 2001.
\newblock Reprint of the 1998 edition.

\bibitem{GochoNakajima}
Toru Gocho and Hiraku Nakajima.
\newblock Einstein-{H}ermitian connections on hyper-{K}\"ahler quotients.
\newblock {\em J. Math. Soc. Japan}, 44(1):43--51, 1992.

\bibitem{Hattori2017TaubNUT}
Kota Hattori.
\newblock On the {T}aub-{NUT} type hyper-{K}\"ahler metrics on the {H}ilbert
  schemes of {$n$} points on {$\Bbb{C}^2$}.
\newblock {\em Differential Geom. Appl.}, 53:76--96, 2017.

\bibitem{Honda2017spectral}
Shouhei Honda.
\newblock Spectral convergence under bounded {R}icci curvature.
\newblock {\em J. Funct. Anal.}, 273(5):1577--1662, 2017.

\bibitem{jiang2025kummer}
Thomas Jiang.
\newblock The kummer construction of calabi-yau and hyper-k\"{a}hler metrics on
  the $k3$ surface, and large families of volume non-collapsed limiting compact
  hyper-k\"{a}hler orbifolds, 2025.

\bibitem{JoyceBook}
Dominic~D. Joyce.
\newblock {\em Compact manifolds with special holonomy}.
\newblock Oxford Mathematical Monographs. Oxford University Press, Oxford,
  2000.

\bibitem{KempfNess1979}
George Kempf and Linda Ness.
\newblock The length of vectors in representation spaces.
\newblock In {\em Algebraic geometry ({P}roc. {S}ummer {M}eeting, {U}niv.
  {C}openhagen, {C}openhagen, 1978)}, volume 732 of {\em Lecture Notes in
  Math.}, pages 233--243. Springer, Berlin, 1979.

\bibitem{KobayashiTodorov}
Ryoichi Kobayashi and Andrey~N. Todorov.
\newblock Polarized period map for generalized {$K3$} surfaces and the moduli
  of {E}instein metrics.
\newblock {\em Tohoku Math. J. (2)}, 39(3):341--363, 1987.

\bibitem{kronheimer1989construction}
P.~B. Kronheimer.
\newblock The construction of {ALE} spaces as hyper-{K}\"ahler quotients.
\newblock {\em J. Differential Geom.}, 29(3):665--683, 1989.

\bibitem{Kronheimer1989Torelli}
P.~B. Kronheimer.
\newblock A {T}orelli-type theorem for gravitational instantons.
\newblock {\em J. Differential Geom.}, 29(3):685--697, 1989.

\bibitem{KN1990instanton}
Peter~B. Kronheimer and Hiraku Nakajima.
\newblock Yang-{M}ills instantons on {ALE} gravitational instantons.
\newblock {\em Math. Ann.}, 288(2):263--307, 1990.

\bibitem{Page1978}
Don~N. Page.
\newblock A physical picture of the {K}3 gravitational instanton.
\newblock {\em Physics Letters B}, 80(1):55--57, 1978.

\bibitem{tazoe2025}
Itsuki Tazoe.
\newblock On asymptotic behavior of the second {C}hern forms on degenerating
  {K}\"ahler-{E}instein surfaces, 2025.

\bibitem{TV2005adv}
Gang Tian and Jeff Viaclovsky.
\newblock Moduli spaces of critical {R}iemannian metrics in dimension four.
\newblock {\em Adv. Math.}, 196(2):346--372, 2005.

\bibitem{Uhlenbeck1982}
Karen~K. Uhlenbeck.
\newblock Removable singularities in {Y}ang-{M}ills fields.
\newblock {\em Comm. Math. Phys.}, 83(1):11--29, 1982.

\bibitem{Yau1978}
Shing~Tung Yau.
\newblock On the {R}icci curvature of a compact {K}\"{a}hler manifold and the
  complex {M}onge-{A}mp\`ere equation. {I}.
\newblock {\em Comm. Pure Appl. Math.}, 31(3):339--411, 1978.

\bibitem{Zhang2026}
Yuguang Zhang.
\newblock {R}emarks on {K}\"ahler orbifolds of non-negative {R}icci curvature.
\newblock {\em To appear in Journal of the Mathematical Society of Japan}.

\end{thebibliography}

\end{document}